\newtheorem{thm}{Theorem}[section]
\newtheorem{lem}[thm]{Lemma}
\newtheorem{cor}[thm]{Corollary}
\newtheorem*{conjecture*}{Conjecture}
\theoremstyle{remark} 
\newtheorem*{question*}{Question}
\newtheorem{remark}[thm]{Remark}
\newtheorem{example}[thm]{Example}
\theoremstyle{definition} 
\newtheorem{define}[thm]{Definition}
\numberwithin{equation}{section}
\numberwithin{figure}{section}
\newcommand{\ZZ}{\mathbb{Z}}     % Integers
\newcommand{\RR}{\mathbb{R}}     % Reals
\newcommand{\Aff}{\mathbb{A}}      % Affine space
\newcommand{\QQ}{\mathbb{Q}}      %Rationals
\newcommand{\CC}{\mathbb{C}}      % Complex Numbers
\newcommand{\be}{\begin{equation}}
\newcommand{\ee}{\end{equation}}
\newcommand{\benn}{\begin{equation*}}
\newcommand{\eenn}{\end{equation*}}
\newcommand{\ba}{\begin{aligned}}
\newcommand{\ea}{\end{aligned}}
\newcommand{\bbm}{\begin{bmatrix}}
\newcommand{\ebm}{\end{bmatrix}}
\newcommand{\bpm}{\begin{pmatrix}}
\newcommand{\epm}{\end{pmatrix}}
\newcommand{\bi}{\begin{itemize}}
\newcommand{\ei}{\end{itemize}}
\newcommand{\Hom}{\operatorname{Hom}}
\newcommand{\im}{\operatorname{im}} % Image
\newcommand{\Div}{\operatorname{div}}    % Divisor of a rational function
\newcommand{\Pic}{\operatorname{Pic}^0}        % Picard Groups
\newcommand{\Zh}{\operatorname{Zh}}   % Zhang subscript
\newcommand{\simarrow}{\stackrel{\sim}{\rightarrow}}    % Isomorphic mapping
\newcommand{\ip}[2]{\left\langle #1,#2 \right\rangle} %inner product
\newcommand{\into}{\hookrightarrow}     % Inclusion arrow
\newcommand{\Prin}{\operatorname{Prin}}
\renewcommand{\Div}{\operatorname{Div}}
\renewcommand{\Aff}{\operatorname{Aff}}
\newcommand{\Euclidean}{\operatorname{L^2}}
\title{Metric Properties of the Tropical Abel-Jacobi Map}
\author{Matthew Baker}
\address{
School of Mathematics \\
Georgia Institute of Technology \\
Atlanta, GA \\ 
USA} 
\email{mbaker@math.gatech.edu}
\urladdr{http://www.math.gatech.edu/~mbaker/}
\author{Xander Faber}
\address{
Department of Mathematics and Statistics \\
McGill University \\
Montr\'eal, QC  \\ 
CANADA} 
\email{xander@math.mcgill.ca}
\urladdr{http://www.math.mcgill.ca/xander/}
\subjclass[2000]{14H40 (primary);
	% Jacobians, Prym Varieties
 05C50, 05C38 (secondary)
 	% Graphs: Graphs and Matrices / Paths and Cycles
}
\keywords{Tropical Curve, Tropical Jacobian, Picard Group, Abel-Jacobi, Metric Graph, Foster's Theorem}
\begin{document}
	\begin{abstract}

Let $\Gamma$ be a tropical curve (or metric graph), and fix a base 
point $p \in \Gamma$.	
We define the Jacobian group $J(G)$ of a finite weighted graph $G$, and show that the Jacobian $J(\Gamma)$ is canonically isomorphic to the direct limit of $J(G)$ over all weighted graph models $G$ for $\Gamma$.  This result is useful for reducing certain questions about the Abel-Jacobi map 
$\Phi_p : \Gamma \to J(\Gamma)$, defined 
%in \cite{Mikhalkin_Zharkov_Tropical_Jacobians_2008},
by Mikhalkin and Zharkov,
to purely combinatorial questions about weighted graphs.
We prove that $J(G)$ is finite if and only if the edges in each $2$-connected component of $G$ are commensurable over $\QQ$.  As an application of our direct limit theorem, we derive some local comparison formulas between $\rho$  and $\Phi_p^*(\rho)$ for three different natural ``metrics'' $\rho$ on $J(\Gamma)$.  One of these
formulas implies that $\Phi_p$ is a tropical isometry when $\Gamma$ is $2$-edge-connected.  Another shows that the canonical measure
$\mu_{\Zh}$ on a metric graph $\Gamma$, defined by S. Zhang,
% in \cite{Zhang_1993}, 
measures lengths on $\Phi_p(\Gamma)$ with 
respect to the ``sup-norm'' on $J(\Gamma)$.  
	\end{abstract}

\maketitle

\section{Introduction}

In \cite{Bacher_et_al_1997, Baker_Norine_2007, Balacheff_Invariant_2006, Kotani_Sunada_2000,
Mikhalkin_Zharkov_Tropical_Jacobians_2008} and other works, combinatorial analogues of classical facts about Riemann surfaces and their Jacobians are explored in the context of graphs and tropical curves.  (For the purposes of this article, a tropical curve is the same
thing as a compact metric graph of finite total length.)
The Jacobian of a finite (unweighted) graph $G$, as defined in 
\cite{Bacher_et_al_1997}, 
is a certain finite Abelian group whose cardinality is the number of
spanning trees in $G$.  On the other hand, the Jacobian of a tropical curve $\Gamma$ of genus $g$ is a real torus of dimension $g$.
In both cases, given a base point $p$, there is a natural map from $G$ (resp. $\Gamma$) to its Jacobian, called the {\em Abel-Jacobi map}, 
which is injective if and only if
the graph (resp. tropical curve) contains no bridges.  (A \textbf{bridge} is an edge that is not contained in any non-trivial cycle.)
These are combinatorial counterparts of the classical fact that the Abel-Jacobi map from a Riemann surface $X$ to its Jacobian is injective if and only if the genus of $X$ is at least $1$.
In the present paper, we explore some additional parallels between 
these algebro-geometric and combinatorial theories, and
present some new combinatorial results with no obvious analogues in algebraic geometry.

The first part of the paper is devoted to constructing a rigorous framework for understanding tropical curves and their Jacobians via {\em models} (cf.~\cite{Baker_Faber_2006}).
% for doing computations with tropical curves
% In order to {\em prove} these results, and to do computations with tropical curves in general, it is useful to introduce the notion of a {\em model} for a
% metric graph or tropical curve $\Gamma$ (see \cite{BF}); this 
A model for a tropical curve $\Gamma$ is 
% more or less 
just a weighted graph $G$ whose geometric realization is $\Gamma$.
In order to work with tropical curves and their Jacobians from this point of view, one needs to be able to pass freely between different models for $\Gamma$, so it is desirable to set up a general theory of Jacobians for weighted graphs which corresponds to the usual notion of $J(G)$ from \cite{Bacher_et_al_1997,Baker_Norine_2007} when all edge lengths are equal to $1$.  
% Contrary to the suggestion in  \cite[\S9]{Bacher_et_al_1997} that, ``There would be no difficulty to extend all previous considerations to graphs with positive weights on \ldots edges,'' there actually are some technical issues that arise in giving satisfactory definitions for weighted graphs. 
% We define the Jacobian of a weighted graph for the first time in this article.  
% It turns out that 
Essentially every desired consequence works out exactly as one would hope: for each weighted graph $G$ there is a canonical isomorphism 
$J(G) \cong \Pic(G)$ (generalizing the graph-theoretic version of Abel's theorem from \cite{Bacher_et_al_1997}), and $J(\Gamma)$ is canonically
the direct limit of $J(G)$ over all models $G$ for $\Gamma$.  This allows us to give a very simple proof of the tropical version of Abel's theorem, 
first proved in \cite{Mikhalkin_Zharkov_Tropical_Jacobians_2008}.  Our direct limit point of view --- thinking of tropical curves as limits of models --- 
% allows one to 
reduces various 
questions about tropical curves and their Jacobians to questions about finite weighted graphs.  
% Our metric comparison theorems are a good application of this technique.
This discussion will occupy sections 2--4.

% Examples show that 
The Jacobian of a weighted graph $G$ is sometimes finite and sometimes infinite; it is natural to wonder when each case occurs.  We provide a complete answer to this question in \S\ref{Sec: Commensurable}: $J(G)$ is finite if and only if the edges in each $2$-connected component of $G$ are commensurable over $\QQ$.  The proof is an application of potential theory on metric graphs (cf.~\cite{Baker_Faber_2006,Baker_Rumely_2007}). At the other extreme, if the lengths of the edges in each maximal 2-connected component are $\QQ$-linearly independent, then the Jacobian group is free Abelian of rank $\#V(G) - \#Br(G) - 1$, where $V(G)$ and $Br(G)$ are the sets of vertices and bridges of $G$, respectively.  If one fixes the combinatorial type of $G$ (i.e., the underlying unweighted graph), then it seems like a difficult question to describe the possible group structures for $J(G)$ as one varies the edge lengths. 
% If we fix the combinatorial type of $G$, then this is the largest possible free rank of $J(G)$. It would be interesting to try to describe the possible intermediate group structures for $J(G)$ and whether they relate to graph theoretic properties of $G$. 
% In general, $J(G)$ will be a finitely generated Abelian group isomorphic to a subgroup of $\Omega(G)^* / H_1(G, \ZZ) \cong (\RR / \ZZ)^g$, where $g = \text{rk} \ H_1(G, \ZZ)$, but can one say more?  
% {Biggs_Potential_Theory_1997,CR});
% it is ultimately based on the elementary but important fact that the fundamental potential kernel $j_z(x,y)$ on a metric graph is symmetric in $x$ and $y$.  
% Loosely speaking, we can see the finiteness (or lack thereof) of $J(G)$ by thinking of $G$ as an electrical network and studying various induced potentials.

The original motivation for the present work was to better understand the canonical measure $\mu_{\Zh}$ on a metric graph $\Gamma$ of genus at least $1$ defined by Zhang in \cite{Zhang_1993}.  The measure $\mu_{\Zh}$ plays a role in Zhang's theory analogous to the role played in Arakelov theory 
by the canonical $(1,1)$-form $\omega_{X}$ on a Riemann surface $X$ of genus at least $1$. (For the definition, see \S\ref{Sec: Zhang Measure}.) 
% The $(1,1)$-form $\omega_{X}$ can be characterized and defined in many ways; 
One of the most important descriptions of the $(1,1)$-form $\omega_X$ is that it is obtained by pulling back the flat Riemannian metric on the Jacobian $J(X)$ under the Abel-Jacobi map $\Phi_p : X \into J(X)$, for any choice of base point $p \in X$.
It is natural to wonder whether Zhang's measure has a similar description in terms of the Abel-Jacobi map $\Phi_p : \Gamma \to J(\Gamma)$ from a tropical curve $\Gamma$ to its Jacobian.  Although the situation does not appear to be fully analogous to the classical theory, we provide such a description in
this paper: 
% loosely speaking, 
the measure $\mu_{\Zh}$ can be obtained by pulling back a canonical metric on $J(\Gamma)$ which we call the ``sup-norm'' or ``Foster/Zhang'' metric.  More precisely, if $e$ is an edge of some model for $\Gamma$, then the length of $\Phi_p(e)$ with respect to the sup-norm metric is $\mu_{\Zh}(e)$.  This gives a quantitative version of the fact that the edges of $\Gamma$ contracted to a point by $\Phi_p$ are precisely the bridges (which by Zhang's explicit formula for $\mu_{\Zh}$ are exactly the segments of $\Gamma$ to which $\mu_{\Zh}$ assigns no mass).  This analogy is described in greater detail in \S\ref{Sec: Zhang Measure}.

% It turns out that 
There are at least three natural ``metrics'' on $J(\Gamma)$ for which one can explicitly compute the length of $\Phi_p(e)$ on $J(\Gamma)$ in terms of the length $\ell(e)$ of an edge $e$ in some model for $\Gamma$.  In addition to the sup-norm metric, one can also define a ``Euclidean metric'' on $J(\Gamma)$, and we prove that the length of $\Phi_p(e)$ in the Euclidean metric is $\sqrt{\ell(e) \mu_{\Zh}(e)}$.  Although this formula is not as clean as the formula for the sup-norm, it is striking that there is a simple answer in both cases.
A third metric on $J(\Gamma)$ is the ``tropical metric'':
we prove that away from the bridge edges, $\Phi_p$ is a {\em tropical isometry} from $\Gamma$ onto its image.
This result is the most natural of our three metric comparison theorems from the point of view of tropical geometry, whereas our
sup-norm theorem is arguably the most relevant one from the Arakelov-theoretic point of view.
% it seems that each of our metric comparison theorems is interesting from a different point of view.
We define and discuss these metric structures in \S\ref{Sec: Metrics}.

In the final section, we interpret the numbers $F(e) = \mu_{\Zh}(e)$ 
(which we call the {\em Foster coefficients}, after R. Foster
\cite{Foster_Theorem_1949})
% for the edges in some model for $\Gamma$ 
in terms of electrical network theory, orthogonal projections, weighted spanning trees, and random walks on graphs.
% Insert a discussion of random walks.
% This shows that there are interesting relationships between electrical network theory, probability theory, and tropical geometry.

Unlike some authors, we have chosen in this paper to consider only tropical curves with finite total length. This has no real effect on the generality of our discussion of Jacobians; because infinite-length segments do not support harmonic 1-forms, they play no role in the construction of the Jacobian and are collapsed under the Abel-Jacobi map.

\textbf{Acknowledgments.} During the writing of this article, the first author was supported in part by NSF grant DMS-0600027, and the second author by the Centre de Recherches Math\'ematiques and the Institut des Sciences Math\'ematiques in Montreal. The authors would like to thank Farbod Shokrieh for helpful feedback on this work. They also thank the anonymous referees for many thoughtful suggestions.

\section{The Weighted Jacobian}

	The goal of this section is to define and investigate the Jacobian group of a weighted graph. We prove the Jacobian is canonically isomorphic to the Picard group, and that Jacobian groups behave well with respect to length-preserving subdivision of edges.  We also determine exactly when the Jacobian of a weighted graph is a finite group.

\subsection{Weighted graphs}
\label{Sec: Wtd graphs}

A \textbf{weighted graph} $G$ in this paper will be an edge-weighted, connected multigraph, possibly with loop edges, endowed with
a fixed orientation of the edges.  (Most of our constructions, including the Jacobian and Picard group of $G$, are independent of the choice of orientation.) 
More precisely, $G$ is given by specifying a nonempty vertex set $V(G)$, an edge set $E(G)$, a length map\footnote{
	In our geometric study of graphs we have chosen to use lengths rather than the more 
	standard notion of weights $w: E(G) \to \RR_{>0}$ defined by $w(e) = \ell(e)^{-1}$.
} 
$\ell: E(G) \to \RR_{>0}$, and an edge assignment map $\iota: E(G) \to V(G) \times V(G)$ such that for any pair of distinct vertices $x$ and $x'$, there exists a sequence of vertices $x=x_0, x_1, \ldots, x_n=x'$ and a sequence of edges $e_1, \ldots, e_n$ such that $\iota(e_i) = (x_{i-1}, x_i)$ or $\iota(e_i) = (x_i, x_{i-1})$. For an edge $e$, the \textbf{tail vertex} $e^-$ and the \textbf{head vertex} $e^+$ are defined by $\iota(e) = (e^-, e^+)$. Two vertices $x$ and $y$ are \textbf{adjacent}, written $x \sim y$, if there is an edge $e$ such that $\iota(e) = (x,y)$ or $\iota(e) = (y,x)$. In either case, we write $x \in e$ to indicate that $x$ is one of the vertices of $e$. 

Let $A$ be either the ring of integers $\ZZ$ or the field $\RR$. The free $A$-module generated by the vertex set $V(G)$ (resp. the edge set $E(G)$) is called the \textbf{module of $0$-chains (resp. $1$-chains) with coefficients in $A$}, and is denoted by $C_0(G, A)$ (resp. $C_1(G, A)$). Note that $C_j(G, \ZZ) \subset C_j(G, \RR)$ for $j=0,1$. Since $C_0(G, A)$ is canonically isomorphic to its dual $\Hom(C_0(G, A), A)$, we may identify a $0$-chain $f = \sum_{x \in V(G)} n_x. x$ with the function $f: V(G) \to A$ given by $f(x) = n_x$. A similar remark applies to $1$-chains. 

Some authors follow a more canonical approach, 
% of Serre, 
replacing each edge $e$ with two edges $e$ and $\bar{e}$ corresponding to an edge with two possible orientations. A 1-chain is then a function on the edge space such that $f(\bar{e}) = -f(e)$. We have chosen instead to fix an orientation, in the interest of making integration along 1-chains look more like the standard treatment in Riemannian geometry. 

We may define inner products on $C_0(G, \RR)$ and $C_1(G, \RR)$ by
	\begin{eqnarray*}
		 	\ip{f_1}{f_2} & = & \sum_{x \in V(G)} f_1(x)f_2(x),
				\hspace{0.45in} f_1, f_2 \in C_0(G, \RR) \\
			\ip{\alpha_1}{\alpha_2} & = & \sum_{e \in E(G)} \alpha_1(e)\alpha_2(e)\ell(e),
				\hspace{0.2in} \alpha_1, \alpha_2 \in C_1(G, \RR).
	\end{eqnarray*}
The differential operator $d:C_0(G,\RR) \to  C_1(G,\RR)$ applied to a $0$-chain $f \in C_0(G, \RR)$ gives the ``slope of  $f$ along the edge $e$'': 
	\[
		(df)(e) = \frac{f(e^+)-f(e^-)}{\ell(e)}. 
		%= \left[f(e^+) - f(e^-)\right]w(e).
	\]
The adjoint operator $d^*:C_1(G,\RR) \to C_0(G,\RR)$ acts on a $1$-chain $\alpha$ by	
	\begin{equation*} \label{Eq: Adjoint}
		(d^*\alpha)(x) = \sum_{\substack{e \in E(G) \\ e^+ = x}} \alpha(e) - 
			\sum_{\substack{e \in E(G) \\ e^- = x}} \alpha(e).
	\end{equation*}
With these definitions, one immediately checks that the matrix of $\Delta = d^*d$ relative to the basis of $C_0(G,\RR)$ given by the vertices of $G$ is equal to the usual weighted Laplacian matrix of $G$. (See \cite[\S5]{Baker_Faber_2006} for the definition.)

\begin{comment}
	Recall that the weighted Laplacian matrix is given by 
	\begin{equation*}
		Q_{xy} = 
			\begin{cases}
				\sum_{\substack{e \in E(G) \\ x \in e}} w(e) & x=y \\
				w(e) & \iota(e) = (x,y) \text{ or } \iota(e) = (y,x) \\
				0 & x \not\sim y, x\not= y.	
			\end{cases}
	\end{equation*}
\end{comment}

We define $H_1(G, \RR) = \ker(d^*)$ and $H_1(G, \ZZ) = \ker(d^*) \cap C_1(G, \ZZ)$.
%  = \ker(d^*|_{C_1(G, \ZZ)})$. 
These will be called the \textbf{real (resp. integral) $1$-cycles}. By general linear algebra, we get a canonical orthogonal ``Hodge decomposition''
	\begin{equation} \label{Eqn: Chain Decomposition}
		C_1(G,\RR) = \ker(d^*) \oplus \im(d) = H_1(G, \RR) \oplus \im(d).
	\end{equation}

A \textbf{1-form on $G$} is an element of the real vector space with formal basis $\{de: e \in E(G)\}$. A 1-form $\omega = \sum \omega_e .de$ is \textbf{harmonic} if 
	\begin{equation} \label{Eq: Omega}
		\sum_{\substack{e \in E(G) \\ e^+ = x}}  \omega_e =
			\sum_{\substack{e \in E(G) \\ e^- = x}} \omega_e \qquad \text{ for all $x \in V(G)$.}
	\end{equation}
Write $\Omega(G)$ for the space of harmonic $1$-forms. Define integration of the basic $1$-form $de$ along an edge $e'$ by
	\[
		\int_{e'}  de = \begin{cases} \ell(e) & \text{if $e = e'$} \\ 0 & \text{if $e \not= e'$.} \end{cases}
	\]
By linearity, we can extend this definition to obtain an integration pairing:
	\begin{eqnarray*}
		\Omega(G) \times C_1(G, \RR) &\longrightarrow& \RR \\
			(\omega, \alpha) \hspace{1.3cm}&\mapsto& \int_\alpha \omega.
	\end{eqnarray*}

\begin{lem} \label{Lem: Perfect Pairing}
	The kernel on the left of the integration pairing is trivial, while the kernel on the right is $\im(d)$. In particular, 
	integration restricted to $\Omega(G) \times H_1(G, \RR)$ gives a perfect pairing --- i.e., an isomorphism
	 $H_1(G, \RR) \simarrow \Omega(G)^*$.
\end{lem}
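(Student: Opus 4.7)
The plan is to reduce everything to the Hodge decomposition (\ref{Eqn: Chain Decomposition}) by identifying $\Omega(G)$ with $H_1(G, \RR)$ as a subspace of $C_1(G, \RR)$ and comparing the integration pairing to the inner product $\ip{\cdot}{\cdot}$ on $C_1(G, \RR)$. Concretely, for $\alpha = \sum_e \alpha(e)\cdot e$ in $C_1(G,\RR)$ and $\omega = \sum_e \omega_e \cdot de$ in $\Omega(G)$, the definition of $\int_{e'} de$ extends by linearity to give
\[
    \int_\alpha \omega = \sum_{e \in E(G)} \omega_e \alpha(e) \ell(e) = \ip{\tilde\omega}{\alpha},
\]
where $\tilde\omega \in C_1(G,\RR)$ is the $1$-chain defined by $\tilde\omega(e) = \omega_e$. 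The harmonicity condition (\ref{Eq: Omega}) on $\omega$ is literally the equation $d^*\tilde\omega = 0$, so the assignment $\omega \mapsto \tilde\omega$ is a linear isomorphism $\Omega(G) \simarrow H_1(G, \RR)$.

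Given this reformulation, the kernel-on-the-left claim is immediate: if $\int_\alpha \omega = 0$ for every $\alpha \in C_1(G,\RR)$, then in particular $0 = \int_e \omega = \omega_e \ell(e)$ for each edge $e$, and since $\ell(e) > 0$ we obtain $\omega = 0$. For the kernel on the right, I would invoke (\ref{Eqn: Chain Decomposition}): an $\alpha \in C_1(G, \RR)$ lies in the kernel of $\alpha \mapsto \int_\alpha(\cdot)$ if and only if $\ip{\tilde\omega}{\alpha} = 0$ for every $\tilde\omega \in H_1(G, \RR)$, i.e., if and only if $\alpha$ lies in the orthogonal complement of $H_1(G, \RR)$. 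By the Hodge decomposition this complement is exactly $\im(d)$.

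Finally, for the perfect pairing statement, restrict the integration pairing to $\Omega(G) \times H_1(G, \RR)$. The kernel on the left is still trivial (it was already trivial on the larger domain $C_1(G,\RR)$). The kernel on the right consists of $\alpha \in H_1(G, \RR)$ with $\ip{\tilde\omega}{\alpha} = 0$ for all $\tilde\omega \in H_1(G, \RR)$; taking $\tilde\omega = \alpha$ and using positive-definiteness of $\ip{\cdot}{\cdot}$ forces $\alpha = 0$. Since $\Omega(G)$ and $H_1(G, \RR)$ are finite-dimensional of equal dimension (via the identification above), nondegeneracy of the pairing gives the claimed isomorphism $H_1(G, \RR) \simarrow \Omega(G)^*$.

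There is no real obstacle here; the only subtlety is keeping the bookkeeping straight between the three spaces $\Omega(G)$, $H_1(G, \RR)$, and $C_1(G, \RR)$, and noticing that the weighting by $\ell(e)$ built into the inner product is exactly what turns the formal integral $\int_{e'} de = \ell(e)\delta_{ee'}$ into $\ip{\cdot}{\cdot}$ after dropping the ``$d$''. Once that is observed, the lemma is essentially a restatement of the orthogonal decomposition already established in (\ref{Eqn: Chain Decomposition}).
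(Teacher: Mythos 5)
Your proof is correct and follows essentially the same route as the paper's: both identify $\Omega(G)$ with $H_1(G,\RR)$ coefficientwise and reduce the lemma to the orthogonal decomposition \eqref{Eqn: Chain Decomposition} plus positive-definiteness of the weighted inner product; the only cosmetic difference is that you observe once and for all that integration is the inner product $\ip{\tilde\omega}{\alpha}$ and then quote the orthogonality $\im(d) = H_1(G,\RR)^{\perp}$, whereas the paper uses the diagonal case $\int_\alpha\omega=\sum_e s_e^2\ell(e)$ and verifies $\int_{df}\omega=0$ by a direct summation-by-parts computation (which re-derives that same orthogonality). One small correction: your parenthetical justification that the left kernel of the \emph{restricted} pairing is trivial ``because it was already trivial on the larger domain $C_1(G,\RR)$'' is backwards --- shrinking the space you pair against can only enlarge a kernel, never shrink it. The conclusion is nevertheless true and immediate: under your identification the restricted pairing is the inner product on $H_1(G,\RR)\times H_1(G,\RR)$, so the same positive-definiteness argument you already use for the right kernel applies verbatim; alternatively, injectivity of $H_1(G,\RR)\to\Omega(G)^*$ together with the equality of dimensions already gives the isomorphism without any left-kernel statement.
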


\begin{proof}
	For each $e \in E(G)$, let $s_e \in \RR$ be a real number. It follows immediately from the definitions that the $1$-form $\omega = \sum s_e.de$ is harmonic if and only if the $1$-chain $\alpha = \sum s_e.e$ is a $1$-cycle. As $\int_\alpha \omega = \sum s_e^2 \ell(e) = 0$ if and only if $s_e = 0$ for all $e$, we find that the integration pairing restricted to $\Omega(G) \times H_1(G, \RR)$ is perfect. 
	
	To finish the proof, it suffices by \eqref{Eqn: Chain Decomposition} to show that $\int_{df}  \omega = 0$ for all $0$-chains $f \in C_0(G, \RR)$ and all $\omega \in \Omega(G)$. If we write $\omega = \sum \omega_e. de$, then
	\begin{align*}
		\int_{df} \omega = \sum_{e \in E(G)} \sum_{e' \in E(G)}
				&\frac{f(e^+) - f(e^-)}{\ell(e)} \ \omega_{e'} \int_e de' 
			= \sum_{e \in E(G)} \left[f(e^+) - f(e^-)\right]\omega_e \\
			&= \sum_{x \in V(G)} f(x) \left(\sum_{\substack{e \in E(G) \\ e^+ = x}} \omega_e
				-\sum_{\substack{e \in E(G) \\ e^- = x}} \omega_e \right).
	\end{align*}
The final expression vanishes by the definition of harmonic form. 
\end{proof}

	For each $1$-chain $\alpha \in C_1(G, \RR)$, we can define an integration functional $\int_\alpha: \Omega(G) \to \RR$ as above. Lemma~\ref{Lem: Perfect Pairing} shows every element of $\Omega(G)^*$ arises in this way. Consider the following subgroup given by integration on integral $1$-chains:
	\[
		\Omega(G)^{\sharp} = \left\{\int_\alpha \in \Omega(G)^* : \alpha \in C_1(G, \ZZ)\right\}.	
	\]
Again referring to Lemma~\ref{Lem: Perfect Pairing}, we may identify the group of integral cycles $H_1(G, \ZZ)$ with a subgroup of $\Omega(G)^\sharp$ via $\alpha \mapsto \int_\alpha$. 

\begin{define}
	The \textbf{Jacobian} of a weighted graph $G$ is given by
		\[
			J(G) = \Omega(G)^\sharp / H_1(G, \ZZ).
		\]
\end{define}

From the construction of $J(G)$ one sees that it is a finitely generated Abelian group. A canonical set of generators is given by functionals of the form $\int_e$ for $e \in E(G)$.

 \begin{remark}
% In the case of unweighted graphs, the Jacobian group can be found in the literature (up to isomorphism) under the names ``critical group'' or ``sandpile group.'' 
%We will give a natural isomorphism between the Jacobian group and the Picard group of a weighted graph in \S\ref{Sec: Picard Group}.
There are several other definitions of the Jacobian group in the literature that deserve comment:
\begin{enumerate}
	\item  A definition of Jacobian group for unweighted graphs was given in \cite{Bacher_et_al_1997}, which establishes several basic properties of $J(G)$, including the Abel-Jacobi isomorphism.  In the final section, the authors suggest ``There would be no difficulty to extend all previous considerations to graphs with positive weights on vertices and edges.'' However, their definition of Jacobian group does not immediately generalize well to the weighted case because $H_1(G, \ZZ)$ is not an integral lattice.  Our definition not only agrees with theirs in the unweighted case, but it also behaves well with respect to length-preserving edge subdivisions and passage to the limit over all such subdivisions. Nevertheless, we would like to acknowledge the inspiration gained from \cite{Bacher_et_al_1997}.
	
	\item For a weighted graph $G$, define $J(G)_\RR := \Omega(G)^* / H_1(G, \ZZ)$ equipped with the inner product structure arising by duality from Lemma~\ref{Lem: Perfect Pairing}. The ``jacobienne'' in \cite{Balacheff_Invariant_2006} and the ``Albanese torus'' with its ``flat structure'' in \cite{Kotani_Sunada_2000} are both defined to be the real torus $H_1(G, \RR) / H_1(G, \ZZ)$ with the inner product defined in \S\ref{Sec: Wtd graphs}. They are canonically isomorphic to $J(G)_\RR$, which contains our Jacobian group $J(G)$ as a subgroup. The ``Jacobian torus'' in \cite{Kotani_Sunada_2000} is dual to the Albanese torus.

% See Theorem~\ref{Thm: Limit Jacobian} for a description of how this real torus connects to our definition of the Jacobian group. 

%	\item For an unweighted graph $G$, the definition of the Jacobian of a graph given in \cite{James-Miranda_Riemann-Roch_2009} agrees with our definition of Picard group given in \S\ref{Sec: Picard Group}. However, when $G$ is a weighted graph, their definition differs substantially. For example, the Jacobian in \cite{James-Miranda_Riemann-Roch_2009} does not satisfy any kind of compatibility under refinement. 
\end{enumerate}
\end{remark}

\begin{example}
	Let $G$ be the weighted graph with two vertices $x,y$, two edges $e_1, e_2$ such that $\iota(e_1) = \iota(e_2) = (x,y)$, and $\ell(e_1) = 1 - \ell(e_2) = r < 1$. Then $H_1(G, \ZZ) = \ZZ.(e_1 - e_2)$. If we identify $H_1(G, \RR)$ with $\Omega^*(G)$ as in Lemma~\ref{Lem: Perfect Pairing}, then $\int_{e_1} = r.(e_1 - e_2)$ and $\int_{e_2} = (r-1).(e_1 - e_2)$. It follows that 
	\begin{align*}
		J(G) &= \Big( \ZZ. r(e_1 - e_2) + \ZZ.(1-r)(e_1 - e_2) \Big) \; \Big/ \; \ZZ.(e_1 - e_2) \\
		&\cong \begin{cases} \ZZ & \text{ if $r \not\in \QQ$,} \\
			\ZZ / n\ZZ & \text{ if $r = m/n$ and $\gcd(m,n) =1$.}
			\end{cases}
	\end{align*}
\end{example}

\begin{example}
	Extending the previous example, let $G$ be the weighted graph with $n+1$ vertices and $n+1$ edges arranged in a single directed cycle. Let $\ell_0, \ldots, \ell_n$ be the lengths of the edges, and suppose that $\sum \ell_j = 1$. Then one can verify that
	\[
		J(G) \cong \left(\ZZ. \ell_0 + \cdots + \ZZ. \ell_n \right)  /  \ \ZZ \ \subset \  \RR / \ZZ.
	\]
If $\ell_0, \ldots, \ell_n$ are rational, then $J(G)$ is a (finite) torsion group. At the other extreme, if $\ell_0, \ldots, \ell_n$ are $\QQ$-linearly independent, then $J(G)$ is free Abelian of rank~$n$.  Compare with Theorems~\ref{Thm: Commensurate} and~\ref{Thm: Maximal Rank}. 
\end{example}

% \begin{remark}If $\pi: C_1(G, \RR) \to H_1(G, \RR)$ is the orthogonal projection that yields the decomposition \eqref{Eqn: Chain Decomposition}, then one could equally well define 
% $\Omega(G)^\sharp = \left\{\int_\alpha \in \Omega(G)^* : \alpha \in \pi(C_1(G, \ZZ)) \right\}$.
% Indeed, they must agree because the integration pairing is trivial on $\im(d)$, the orthogonal complement of the image of $\pi$. But this definition is far less elegant. 
% \end{remark}

% ------------------------------------------------------------
% ------------------------------------------------------------

\subsection{The Picard Group and the Jacobian}
\label{Sec: Picard Group}	
	Let $G$ be a weighted graph. Recall that we have a homomorphism $d: C_0(G, \RR) \to C_1(G, \RR)$ and its adjoint $d^*: C_1(G, \RR) \to C_0(G, \RR)$. Define $\im(d)_\ZZ = \im(d) \cap C_1(G, \ZZ)$. Elements of $\im(d)_\ZZ$ correspond to functions $f: V(G) \to \RR$ with integer slopes, modulo constant functions. 
	
	The \textbf{divisor group of $G$}, denoted $\Div(G)$, is defined to be $C_0(G, \ZZ)$. The \textbf{degree} of a divisor $D = \sum_{x \in V(G)} D(x).x$ is $\deg(D) = \sum D(x)$.  Define the following subgroups of $\Div(G)$:
	\begin{eqnarray*}
		\Div^0(G) &=& \{D \in \Div(G) : \deg(D) = 0\} \\
		\Prin(G) &=& d^*\left(\im(d)_\ZZ\right).
%		              &=& \Delta\left(C_0(G,\ZZ)\right),
	\end{eqnarray*}
%where $\Delta = d^*d$.  
A simple calculation shows that if $\alpha \in C_1(G, \RR)$ is any $1$-chain, then $\deg(d^*\alpha) = 0$. Consequently, we find $\Prin(G) \subset \Div^0(G)$.

\begin{define}
	The \textbf{(degree zero) Picard Group} of a weighted graph $G$ is given by
		\[
			\Pic(G) = \Div^0(G) / \Prin(G).
		\]
\end{define}

	We want to show that the homomorphism $d^*: C_1(G, \ZZ) \to C_0(G, \ZZ) = \Div(G)$ descends to an isomorphism $h: J(G) \to \Pic(G)$. To that end, consider the following diagram of homomorphisms (the map $h$
will be discussed in the theorem below):
% It will be an easy consequence of the next result.
	\begin{equation} \label{Eq: Comm Diag}
			\xymatrix{
				& & 0 \ar[d] & 0 \ar[d] & \\
				 0 \ar[r] & H_1(G, \ZZ) \ar@{=}[d] \ar[r] & {H_1(G, \ZZ) \oplus \im(d)_\ZZ}
				 	\ar[r]^{\phantom{aaaaaa} d^*}
				 \ar[d] & \Prin(G) \ar[r] \ar[d]  & 0 \\
				 0 \ar[r]& H_1(G, \ZZ) \ar[r] & C_1(G, \ZZ) \ar[r]^{d^*} 
				 	\ar[d] & \Div^0(G) \ar[r] \ar[d] & 0 \\
				& & J(G) \ar@{-->}[r]^h \ar[d] & \Pic(G) \ar[d] & \\
				& & 0 & 0 & \\
				}
	\end{equation}	
	
\begin{lem} \label{Lem: Comm Diag}
	Let $G$ be a weighted graph. Then the diagram \eqref{Eq: Comm Diag} (without the map $h$) is commutative and exact. 
%	\begin{equation*}
%			\xymatrix{
%				& 0 \ar[d] & 0 \ar[d] & & \\
%				 & H_1(G, \ZZ) \ar[d] \ar@{=}[r] & H_1(G, \ZZ)\ar[d] & & \\
%				0 \ar[r] & H_1(G, \ZZ) \oplus \im(d)_\ZZ \ar[d]_{d^*} \ar[r] & C_1(G, \ZZ)
%					\ar[r] \ar[d]_{d^*} & J(G) \ar@{-->}[d]_h \ar[r] & 0 \\
%				0 \ar[r] & \Prin(G) \ar[d] \ar[r] & \Div^0(G)\ar[d] \ar[r] & \Pic(G) \ar[r]& 0 \\
%				& 0 & 0 & & \\
%				}
%	\end{equation*}
\end{lem}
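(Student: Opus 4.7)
The plan is to verify commutativity of the two squares and exactness of each of the five short sequences (two rows and three columns) separately. The main input will be Lemma~\ref{Lem: Perfect Pairing}, which is what allows us to identify the cokernel of the middle column with $J(G)$; everything else is essentially diagram chasing or a standard graph-theoretic computation.

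Commutativity is essentially formal. The left square commutes because both paths send $\gamma \in H_1(G,\ZZ)$ to $\gamma$ viewed as an element of $C_1(G,\ZZ)$. The right square commutes because $d^*$ vanishes on $H_1(G,\ZZ) = \ker(d^*)$, so on $H_1(G,\ZZ) \oplus \im(d)_\ZZ$ the map $d^*$ gives the same answer whether computed along the top row or after inclusion into $C_1(G,\ZZ)$.

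Next I would handle the rows. For the middle row, $\ker(d^*) = H_1(G,\ZZ)$ by definition; the containment $\im(d^*) \subset \Div^0(G)$ follows from the direct identity $\deg(d^*\alpha) = 0$ for any $\alpha \in C_1(G,\ZZ)$, since each edge contributes its value with opposite signs at its head and tail; and surjectivity onto $\Div^0(G)$ follows from connectedness of $G$, since any element of $\Div^0(G)$ is a $\ZZ$-linear combination of differences $x_i - y_i$, and each $x_i - y_i$ is $d^*$ of a signed sum of edges along a path from $y_i$ to $x_i$. For the top row, the injection is obvious, the surjection onto $\Prin(G)$ is just the definition $\Prin(G) = d^*(\im(d)_\ZZ)$, and the kernel of $d^*$ on the direct sum reduces to $H_1(G,\ZZ)$ once one observes that $d^*$ is injective on $\im(d)$: indeed, $d^*df = \Delta f = 0$ forces $f$ to be constant on the connected graph $G$, so $df = 0$.

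For the columns, the right column is exact by definition of $\Pic(G)$. The middle column is the main technical step. Injectivity of $H_1(G,\ZZ) \oplus \im(d)_\ZZ \into C_1(G,\ZZ)$ follows from the Hodge decomposition \eqref{Eqn: Chain Decomposition}, which forces $H_1(G,\RR) \cap \im(d) = 0$, so the direct sum really does embed. To identify the cokernel with $J(G)$, I would invoke Lemma~\ref{Lem: Perfect Pairing}: the integration map $\alpha \mapsto \int_\alpha$ sends $C_1(G,\ZZ)$ onto $\Omega(G)^\sharp$ by definition and has kernel $\im(d) \cap C_1(G,\ZZ) = \im(d)_\ZZ$ by the lemma, so it descends to an isomorphism $C_1(G,\ZZ)/\im(d)_\ZZ \simarrow \Omega(G)^\sharp$. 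Under this identification $H_1(G,\ZZ)$ maps isomorphically onto its image inside $\Omega(G)^\sharp$ via the perfect pairing, and dividing by it gives
\[
	C_1(G,\ZZ) \big/ \left( H_1(G,\ZZ) \oplus \im(d)_\ZZ \right) \;\cong\; \Omega(G)^\sharp / H_1(G,\ZZ) \;=\; J(G),
\]
which is the content of the middle column. This bookkeeping step between the $1$-chain description and the functional description of $J(G)$ is the only part of the proof requiring anything beyond routine verification.
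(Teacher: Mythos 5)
Your proof is correct and follows exactly the route the paper intends: routine commutativity and row/column checks, the Hodge decomposition \eqref{Eqn: Chain Decomposition} for injectivity of $H_1(G,\ZZ)\oplus\im(d)_\ZZ\into C_1(G,\ZZ)$, a path argument for surjectivity of $d^*$ onto $\Div^0(G)$, and Lemma~\ref{Lem: Perfect Pairing} to identify the cokernel of the middle column with $J(G)$ via $\alpha\mapsto\int_\alpha$. The paper simply labels this ``an exercise in unwinding the definitions,'' and your write-up supplies precisely that verification with no gaps.
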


\begin{proof}
	This is an exercise in unwinding the definitions. Note that the map $C_1(G, \ZZ) \to J(G)$ is defined by $\alpha \mapsto \int_\alpha$. 
\end{proof}

\begin{thm} \label{Thm: Jac/ Pic}
	Let $G$ be a weighted graph. There exists a unique homomorphism $h: J(G) \to \Pic(G)$ that makes the diagram \eqref{Eq: Comm Diag} (with the map $h$) commutative. Moreover, $h: J(G) \to \Pic(G)$ is an isomorphism. 
\end{thm}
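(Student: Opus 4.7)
The plan is to build $h$ by the universal property of the cokernel and then apply the snake lemma to the top two rows of diagram~\eqref{Eq: Comm Diag} to get the isomorphism essentially for free. Everything is a direct consequence of the exactness already established in Lemma~\ref{Lem: Comm Diag}, together with Lemma~\ref{Lem: Perfect Pairing} and the Hodge decomposition~\eqref{Eqn: Chain Decomposition}.

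First I would pin down the kernel of the surjection $\pi: C_1(G, \ZZ) \to J(G)$ given by $\alpha \mapsto [\int_\alpha]$. By Lemma~\ref{Lem: Perfect Pairing}, $\int_\alpha = 0$ in $\Omega(G)^*$ iff $\alpha \in \im(d) \cap C_1(G,\ZZ) = \im(d)_\ZZ$, so $\pi$ factors as $C_1(G, \ZZ) \twoheadrightarrow \Omega(G)^\sharp \twoheadrightarrow J(G)$; quotienting further by $H_1(G, \ZZ)$ and using~\eqref{Eqn: Chain Decomposition} to see that $H_1(G, \ZZ) \cap \im(d)_\ZZ = 0$, one concludes $\ker(\pi) = H_1(G, \ZZ) \oplus \im(d)_\ZZ$. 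The composition $C_1(G, \ZZ) \xrightarrow{d^*} \Div^0(G) \to \Pic(G)$ then annihilates $\ker(\pi)$: it kills $H_1(G, \ZZ) = \ker d^*$ trivially, and it kills $\im(d)_\ZZ$ because $d^*(\im(d)_\ZZ) \subset \Prin(G)$ by the top row of~\eqref{Eq: Comm Diag}. Hence, by the universal property of the cokernel, the composition descends to a unique homomorphism $h: J(G) \to \Pic(G)$ with $h([\int_\alpha]) = [d^*\alpha]$; this is exactly the commutativity condition for~\eqref{Eq: Comm Diag}, and uniqueness is forced by the surjectivity of $\pi$.

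For the isomorphism statement, I would view the top two rows of~\eqref{Eq: Comm Diag} as a morphism of short exact sequences, with vertical arrows given by the identity on $H_1(G, \ZZ)$ and by inclusions on the middle and right columns (the middle inclusion being valid thanks to Hodge). All three vertical maps are injective, and their cokernels are $0$, $J(G)$, and $\Pic(G)$ respectively. The snake lemma therefore produces an exact sequence $0 \to 0 \to J(G) \xrightarrow{h} \Pic(G) \to 0$, which is precisely the assertion that $h$ is an isomorphism. Alternatively, one can chase directly: surjectivity of $h$ is immediate from surjectivity of $d^*: C_1(G, \ZZ) \to \Div^0(G)$, and injectivity follows by lifting a class in $\ker(h)$ to $\alpha \in C_1(G, \ZZ)$ with $d^*\alpha = d^*\beta$ for some $\beta \in \im(d)_\ZZ$, which forces $\alpha - \beta \in H_1(G, \ZZ)$ and hence $\alpha \in \ker(\pi)$.

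No step here is really an obstacle; the argument is entirely formal once Lemma~\ref{Lem: Comm Diag} is in hand, with the Hodge decomposition serving as the only non-diagrammatic ingredient (it is what makes $H_1(G, \ZZ) + \im(d)_\ZZ$ an internal direct sum inside $C_1(G, \ZZ)$ and thus what identifies the cokernel of the middle vertical inclusion with $J(G)$).
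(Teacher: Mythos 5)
Your proposal is correct and is essentially the paper's argument: the paper simply applies the Snake Lemma to diagram~\eqref{Eq: Comm Diag}, whose exactness is the content of Lemma~\ref{Lem: Comm Diag}, and your construction of $h$ via the cokernel universal property together with the snake-lemma (or direct chase) step just spells out that same application in detail. The only ingredient you re-derive, the identification of the middle cokernel with $J(G)$ via $\ker(\pi) = H_1(G,\ZZ) \oplus \im(d)_\ZZ$, is already part of the exactness asserted in Lemma~\ref{Lem: Comm Diag}, so nothing in your route differs in substance.
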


\begin{proof}
Apply the Snake Lemma to \eqref{Eq: Comm Diag}.
\end{proof}

% \begin{proof}	
% 	The Snake Lemma applied to the two horizontal exact sequences gives existence:
% 	\[
%		h: \coker\left(H_1(G, \ZZ) \oplus \im(d)_\ZZ \to C_1(G, \ZZ)\right) \longrightarrow
%			\coker\left(\Prin(G) \to \Div^0(G)\right).
%	\] 
% A simple diagram chase gives uniqueness. 
%	Since $\coker(d^*) = 0$ in both places in \eqref{Eq: Comm Diag}, and since $\ker(d^*)$ agrees in both places, the Snake Lemma applied to the two vertical sequences immediately shows that $h$ is both surjective and injective.
% \end{proof}

	For unweighted graphs, a different proof of this theorem is given in \cite[Prop.~7]{Bacher_et_al_1997}.

%	A different proof of this theorem, in the case of unweighted graphs, is given in \cite[Prop.~7]{Bacher_et_al_1997}. 
% Their demonstration passes through an auxiliary ``determinant group'' of the lattice of integral cuts, and although it shares some structural similarities with our proof,  we believe ours to be a cleaner and more direct approach.  

% ------------------------------------------------------------
% ------------------------------------------------------------

\subsection{Compatibility under refinement}

Given two weighted graphs $G_1$ and $G_2$, we say that $G_2$ \textit{refines} $G_1$ if there exist an injection $a: V(G_1) \hookrightarrow V(G_2)$ and a surjection $b: E(G_2) \twoheadrightarrow E(G_1)$ such that for any edge $e$ of $G_1$ there exist vertices $a(e^-)=x_0, x_1, \ldots, x_n = a(e^+) \in V(G_2)$ and edges $e_1, \ldots, e_n \in E(G_2)$ satisfying (1) $b^{-1}(e) = \{e_1, \ldots, e_n\}$, (2) $\sum_{i=1}^n \ell(e_i) = \ell(e)$, and (3) $\iota_2(e_i) = (x_{i-1}, x_i)$ for each $i=1, \ldots n$. (Here $\iota_2$ is the associated edge map for $G_2$.) See Figure~\ref{Fig: Refinement} below for an example. Roughly, one refines a weighted graph by subdividing its edges in a length preserving fashion and endowing the new edges with the induced orientation. In the interest of readable exposition, we will identify the vertices of $G_1$ with their images in $V(G_2)$ by $a$, and we will say that the edge $e$ of $G_1$ is subdivided into $e_1, \ldots, e_n$ if $b^{-1}(e) = \{e_1, \
 \ldots, e_n\}$. Henceforth, we will suppress mention of $a$ and $b$ when we speak of refinements.

If $G_2$ is a refinement of $G_1$, we write $G_1 \leq G_2$. This is a partial ordering on the collection of all weighted graphs. There is a canonical refinement homomorphism $r_{21}: C_1(G_1, \RR) \to C_1(G_2,\RR)$ defined by $r_{21}(e) = \sum_{i=1}^n e_i$ if $e$ is an edge of $G_1$ that is subdivided into $e_1, \ldots, e_n$. Identifying a $1$-chain $\alpha \in C_1(G_1, \RR)$ with the corresponding function on the edges of $G_1$, we have $r_{21}(\alpha)(e_i) = \alpha(e)$ for all $i=1, \ldots, n$. If $G_1 \leq G_2 \leq G_3$, then it is clear that we have $r_{31} = r_{32}   r_{21}$. If no confusion will arise, we avoid writing the subscripts on the refinement homomorphisms.

The refinement homomorphism $r: C_1(G_1, \RR) \to C_1(G_2, \RR)$ induces a push-forward homomorphism $r_*$ on $1$-forms defined by $r_*(de) = \sum_{i=1}^n de_i$, in the notation of the last paragraph. 

\begin{lem} \label{Lem: Refinement}
	Let $G_1$ and $G_2$ be two weighted graphs such that $G_2$ refines $G_1$. 	
	\begin{enumerate}
	
	\item The refinement homomorphism $r$ induces an isomorphism of real vector spaces
		$H_1(G_1,\RR) \simarrow H_1(G_2, \RR)$ such that 
			$r(H_1(G_1, \ZZ)) = H_1(G_2, \ZZ)$. Moreover, if $d_i: C_0(G_i, \RR) \to C_1(G_i, \RR)$
			denotes the differential operator on $G_i$, then 
			$r\left( \im(d_1)_\ZZ \right) \subset \im(d_2)_\ZZ$.
	\item	 The push-forward homomorphism $r_*$ induces an isomorphism 
	$\Omega(G_1) \simarrow \Omega(G_2)$, and it is compatible with the integration pairing in
	the sense that
	for each $\alpha \in C_1(G, \RR)$ and $\omega \in \Omega(G_1)$, we have
			$\int_{r(\alpha)} r_*(\omega) = \int_\alpha \omega$.
			
	\item The refinement homomorphism induces an isomorphism $\Omega(G_1)^* \simarrow \Omega(G_2)^*$
	given by $\int_\alpha \mapsto \int_{r(\alpha)}$ for $\alpha \in H_1(G_1, \RR)$. In particular, it induces an
	isomorphism
		\[
			\Omega(G_1)^* / H_1(G_1, \ZZ) \simarrow \Omega(G_2)^* / H_1(G_2, \ZZ).
		\]
	\end{enumerate}
\end{lem}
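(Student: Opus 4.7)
The plan is to verify each part by direct calculation, exploiting the fact that refinement only subdivides existing edges in a length-preserving way. All the substantive verification happens at the new interior subdivision vertices; at old vertices of $G_1$, both boundary and harmonicity computations are unchanged. Part (3) will then follow formally from (1) and (2) together with Lemma~\ref{Lem: Perfect Pairing}.

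For part~(1), fix an edge $e$ of $G_1$ subdivided as $e_1, \ldots, e_n$ with new intermediate vertices $x_1, \ldots, x_{n-1}$. First I would check that $r$ sends $H_1(G_1, \RR)$ into $H_1(G_2, \RR)$ by computing $d_2^*(r\alpha)$: at an old vertex this agrees with $(d_1^* \alpha)$, while at each $x_i$ the only incident edges are $e_i$ (with head $x_i$) and $e_{i+1}$ (with tail $x_i$), both carrying coefficient $\alpha(e)$, so their contributions cancel. Injectivity is immediate because $r$ simply copies coefficients. For surjectivity onto $H_1(G_2, \RR)$, given $\beta \in H_1(G_2, \RR)$ the cycle condition at each $x_i$ forces $\beta(e_i) = \beta(e_{i+1})$; hence $\beta$ is constant on each subdivided edge of $G_1$, and defining $\alpha(e)$ to be this common value gives $r(\alpha) = \beta$ with $\alpha \in H_1(G_1, \RR)$. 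The integer statement is automatic, since $r$ preserves integrality in both directions. For the inclusion $r(\im(d_1)_\ZZ) \subset \im(d_2)_\ZZ$, given $\alpha = d_1 f$ with $\alpha$ integer-valued, extend $f$ to $\tilde{f} \in C_0(G_2, \RR)$ by linear interpolation along each subdivided edge; the slope of $\tilde{f}$ on each $e_i$ is precisely $\alpha(e) \in \ZZ$, so $d_2 \tilde{f} = r(\alpha)$ is integer-valued.

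For part~(2), I would first verify that $r_*(\omega) \in \Omega(G_2)$ for $\omega = \sum \omega_e\, de \in \Omega(G_1)$: harmonicity at an old vertex is identical to that of $\omega$, and at each new vertex $x_i$ the only contributing edges are $e_i$ and $e_{i+1}$, both with coefficient $\omega_e$, so the harmonic condition there reduces to $\omega_e = \omega_e$. Injectivity is clear, and a dimension count (by Lemma~\ref{Lem: Perfect Pairing}, $\dim \Omega(G_i)$ equals the first Betti number of $G_i$, which is invariant under edge subdivision) yields the isomorphism. The integration identity $\int_{r(\alpha)} r_*(\omega) = \int_\alpha \omega$ reduces by bilinearity to the basis case $\alpha = e$, $\omega = de'$: both sides equal $\ell(e)$ when $e = e'$ (since $\sum_i \ell(e_i) = \ell(e)$) and vanish otherwise.

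For part~(3), by Lemma~\ref{Lem: Perfect Pairing} integration identifies $H_1(G_i, \RR) \simarrow \Omega(G_i)^*$. The compatibility from (2) shows that, under these identifications, the isomorphism of (1) sends $\int_\alpha$ to $\int_{r(\alpha)}$, which is the claimed map on duals. Part~(1) further says this isomorphism carries $H_1(G_1, \ZZ)$ onto $H_1(G_2, \ZZ)$, so we may pass to quotients to obtain $\Omega(G_1)^*/H_1(G_1, \ZZ) \simarrow \Omega(G_2)^*/H_1(G_2, \ZZ)$. The only real obstacle anywhere in the argument is the bookkeeping at interior subdivision vertices in (1) and (2); everything else is mechanical.
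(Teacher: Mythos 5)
Your proposal is correct and follows essentially the same route as the paper's proof: a coefficient-by-coefficient check of the cycle and harmonicity conditions at the new subdivision vertices, linear interpolation of $f$ to handle $r(\im(d_1)_\ZZ)\subset\im(d_2)_\ZZ$ (the paper's formula for the value at the new vertex is exactly this interpolation), and for (3) the composition $\Omega(G_1)^*\simarrow H_1(G_1,\RR)\simarrow H_1(G_2,\RR)\simarrow\Omega(G_2)^*$ via Lemma~\ref{Lem: Perfect Pairing} together with $r(H_1(G_1,\ZZ))=H_1(G_2,\ZZ)$. The only cosmetic differences are that the paper first reduces by induction to a single edge subdivided into two pieces, and it gets surjectivity of $r_*$ by observing that the harmonicity condition is formally the cycle condition, whereas you argue via injectivity plus invariance of the Betti number; both are fine.
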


\begin{proof}
By induction on the number of vertices in the refinement, we may assume that $G_2$ is obtained from $G_1$ by subdividing an edge $e_0$ into $e_1$ and $e_2$.  Let $\iota_1(e_0) = (x_0, x_2)$. Let the new vertex $x_1$ on $G_2$ be defined by $\iota_2(e_1) = (x_0, x_1)$ and $\iota_2(e_2) = (x_1, x_2)$.  (See Figure~\ref{Fig: Refinement}.)

\begin{figure}[ht]
	\begin{center}
		\begin{picture}(200, 30)(0,0)
			\put(-60,10){\includegraphics{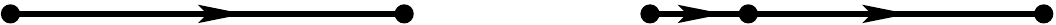}}
			\put(-60,-2){$e^-_0$}
			\put(53,-2){$e^+_0$}
			\put(-2,22){$e_0$}
			\put(135,22){$e_1$}
			\put(188,22){$e_2$}
			\put(124,-2){$x_0$}
			\put(153,-2){$x_1$}
			\put(238,-2){$x_2$}
		\end{picture}
	\end{center}
	\caption{An illustration of the edge $e_0$ subdivided into edges $e_1, e_2$, with arrows indicating
		the agreement of the orientations. }
	\label{Fig: Refinement}
\end{figure}
	
We now begin the proof of (1). Define $d_1^*$ and $d_2^*$ to be the adjoints of the corresponding differential operators on $G_1$ and $G_2$, respectively. Let $\alpha \in C_1(G_1, \RR)$. If $x\not=x_0, x_1, x_2$, then the definition of $(d_1^*\alpha)(x)$ and $(d_2^*r(\alpha))(x)$ doesn't involve $e_0, e_1$ or $e_2$. Hence $(d_2^*r(\alpha))(x) =(d_1^*\alpha)(x)$. For the remaining vertices, we see that
	\be \label{Eq: Refined cycles}
		\ba
			(d_2^*r(\alpha))(x_0) &= \sum_{\substack{e \in E(G_2) \\ 
			e^+ = x_0}} r(\alpha)(e) - 
			\sum_{\substack{e \not= e_1 \in E(G_2) \\ e^- = x_0}} r(\alpha)(e) 
			\ -r(\alpha)(e_1) \\
			&= \sum_{\substack{e \in E(G_1) \\ e^+ = x_0}} \alpha(e) - 
			\sum_{\substack{e \not= e_0 \in E(G_1) \\ e^- = x_0}} \alpha(e) 
			\ - \alpha(e_0) \\
			&= (d_1^*\alpha)(x_0), \\
			(d_2^*r(\alpha))(x_1) &= r(\alpha)(e_1) - r(\alpha)(e_2) = 
			\alpha(e_0) -\alpha(e_0) = 0.
		\ea
	\ee
The computation for the vertex $x_2$, similar to that for $x_0$, shows $(d_2^*r(\alpha))(x_2) = (d_1^*\alpha)(x_2)$. As $H_1(G, \RR) = \ker(d^*)$ for any weighted graph $G$, these computations imply that $r\left(H_1(G_1, \RR)\right) \subset H_1(G_2, \RR)$. 

For the opposite inclusion, take $\beta \in H_1(G_2, \RR)$. Since $d_2^*\beta(x_1) = \beta(e_1) - \beta(e_2) = 0$, we may define a cycle $\alpha$ on $G_1$ by setting $\alpha(e_0) = \beta(e_1) = \beta(e_2)$ and $\alpha(e) = \beta(e)$ for all other edges $e$. By construction $r(\alpha) = \beta$, hence $r(H_1(G_1, \RR)) = H_1(G_2, \RR)$. As $r$ is injective, it induces an isomorphism between cycle spaces as desired. 

From the fact that $r(\alpha)$ is integer valued if and only if $\alpha$ is integer valued, we deduce $r(H_1(G_1, \ZZ)) \subset H_1(G_2,\ZZ)$. The opposite inclusion follows by the computation given above for $\RR$-valued cycles. 

Now suppose $f \in C_0(G_1, \RR)$, and define $g \in C_0(G_2, \RR)$ by
	\[
		g(x) = \begin{cases}
			f(x) & \text{if $x \not= x_1$} \\
			& \\
			\displaystyle \frac{f(x_2)\ell(e_1) + f(x_0)\ell(e_2)}{\ell(e_1) + \ell(e_2)} & \text{if $x = x_1$}.
		\end{cases}
	\]
Then a direct computation shows $d_2g = r\left(d_1f\right)$. Hence $r \left( \im(d_1) \right) \subset \im(d_2)$. But $r$ preserves integrality of $1$-chains, so we have proved the final claim of (1).

The computation \eqref{Eq: Refined cycles} shows $r_*$ induces an isomorphism as desired in assertion~(2). Indeed, the defining relation for cycles ($d^*\alpha = 0$) is identical to that for harmonic $1$-forms as in 
\eqref{Eq: Omega}. To check compatibility of the integration pairing, it suffices by linearity to check $\int_{r(e)} r_*(de') = \int_e de'$ for any pair of edges $e, e' \in E(G_1)$. If $e \not= e'$, then both integrals vanish. If $e = e' \not= e_0$, then $r(e) = e$ and $r_*(de') = de'$, so the equality of integrals is obvious in this case as well. Finally, suppose $e = e' = e_0$. Then
	\[
		\int_{r(e_0)} r_*(de_0) = \int_{e_1 + e_2} \left(de_1 + de_2\right) = \ell(e_1) + \ell(e_2) = \ell(e_0)
			= \int_{e_0} de_0.
	\]
% This completes the proof of (2).

Finally, the proof of (3) is given by composing the duality isomorphism from Lemma~\ref{Lem: Perfect Pairing} with
the isomorphism in part (1):
	\[
		\Omega(G_1)^* \simarrow H_1(G_1, \RR) \simarrow H_1(G_2, \RR) \simarrow \Omega(G_2)^*.
	\]
The middle isomorphism preserves integral cycles, which finishes assertion~(3).
\end{proof}

\begin{thm} \label{Thm: compatibility}
	If $G_1$ and $G_2$ are weighted graphs such that $G_2$ refines $G_1$, then the refinement homomorphism $r$ 
	descends to a canonical injective homomorphism $\rho: J(G_1) \to J(G_2)$. 
	If $G_3$ is a weighted graph that refines $G_2$, so that $G_1 \leq G_2 \leq G_3$, then the injections
	$\rho_{ij}: J(G_j) \to J(G_i)$ satisfy $\rho_{31} = \rho_{32}\rho_{21}$. 
\end{thm}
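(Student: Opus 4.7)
The plan is to build $\rho$ by restricting the ambient isomorphism of Lemma~\ref{Lem: Refinement}(3) to the subgroups defining the Jacobians. Explicitly, part (3) of that lemma supplies an isomorphism $R: \Omega(G_1)^* \simarrow \Omega(G_2)^*$ satisfying $R(\int_\alpha) = \int_{r(\alpha)}$ on integral cycles $\alpha \in H_1(G_1, \RR)$; the compatibility $\int_{r(\alpha)} r_*(\omega) = \int_\alpha \omega$ from part (2), combined with the fact that $r$ sends $\im(d_1)$ into $\im(d_2)$ (part (1)), lets one check that the formula $R(\int_\alpha) = \int_{r(\alpha)}$ holds for arbitrary $\alpha \in C_1(G_1, \RR)$.

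The first step is to verify that $R$ restricts to a map $\Omega(G_1)^\sharp \to \Omega(G_2)^\sharp$. This is immediate, since $r(e) = \sum_{i=1}^n e_i$ is an integer combination of edges, so $r(C_1(G_1, \ZZ)) \subseteq C_1(G_2, \ZZ)$. Next, Lemma~\ref{Lem: Refinement}(1) says $r$ maps $H_1(G_1, \ZZ)$ isomorphically onto $H_1(G_2, \ZZ)$, hence $R(H_1(G_1, \ZZ)) = H_1(G_2, \ZZ)$. Consequently $R$ descends to a well-defined homomorphism
\[
\rho: J(G_1) = \Omega(G_1)^\sharp / H_1(G_1, \ZZ) \longrightarrow \Omega(G_2)^\sharp / H_1(G_2, \ZZ) = J(G_2).
\]

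For injectivity, suppose $\rho([\int_\alpha]) = 0$, so that $\int_{r(\alpha)} = \int_\gamma$ for some $\gamma \in H_1(G_2, \ZZ)$. By Lemma~\ref{Lem: Refinement}(1), write $\gamma = r(\gamma')$ with $\gamma' \in H_1(G_1, \ZZ)$, so that $R(\int_\alpha) = R(\int_{\gamma'})$. Since $R$ is an isomorphism, we conclude $\int_\alpha = \int_{\gamma'}$ as elements of $\Omega(G_1)^\sharp$, and therefore $[\int_\alpha] = 0$ in $J(G_1)$. For functoriality, the identity $r_{31} = r_{32} \circ r_{21}$ on $1$-chains translates directly into the identity $R_{31} = R_{32} \circ R_{21}$ on integration functionals, which then descends to $\rho_{31} = \rho_{32} \circ \rho_{21}$.

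I expect no serious obstacle: the substantive content has already been packaged into Lemma~\ref{Lem: Refinement}, and the theorem is a formal consequence of observing that $R$ preserves the relevant pairs of subgroups. The only point requiring a moment's thought is justifying that the formula $R(\int_\alpha) = \int_{r(\alpha)}$ extends from $H_1(G_1, \RR)$ to all of $C_1(G_1, \RR)$; this follows from the Hodge decomposition $C_1(G_1, \RR) = H_1(G_1, \RR) \oplus \im(d_1)$ together with the inclusion $r(\im(d_1)) \subseteq \im(d_2)$ from part (1) of that lemma.
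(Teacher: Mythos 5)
Your argument is correct, and it takes a somewhat different route from the paper's. The paper defines $\rho$ by passing through the alternative presentation of the Jacobian coming from the diagram \eqref{Eq: Comm Diag}, namely $J(G_i) \cong C_1(G_i,\ZZ)/\left(H_1(G_i,\ZZ)\oplus \im(d_i)_\ZZ\right)$, and then simply observes (via Lemma~\ref{Lem: Refinement}(1)) that $r$ respects the subgroups being quotiented out; injectivity and the cocycle identity $\rho_{31}=\rho_{32}\rho_{21}$ are left to the reader. You instead work directly with the definition $J(G)=\Omega(G)^\sharp/H_1(G,\ZZ)$ and restrict the dual-space isomorphism $R$ of Lemma~\ref{Lem: Refinement}(3). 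This buys you something concrete: injectivity becomes nearly automatic, since $\rho$ is induced by an injective map $R$ together with the exact equality $r(H_1(G_1,\ZZ))=H_1(G_2,\ZZ)$, whereas in the paper's chain-level presentation the reader must check that $r(\alpha)\in H_1(G_2,\ZZ)\oplus\im(d_2)_\ZZ$ forces $\alpha\in H_1(G_1,\ZZ)\oplus\im(d_1)_\ZZ$. You also spell out the functoriality, reducing it to $r_{31}=r_{32}r_{21}$ via the identity $R(\int_\alpha)=\int_{r(\alpha)}$, which matches the paper's intent. One small citation point: the statement of Lemma~\ref{Lem: Refinement}(1) only records the integral inclusion $r(\im(d_1)_\ZZ)\subseteq\im(d_2)_\ZZ$, and since $\im(d_1)_\ZZ$ need not span $\im(d_1)$ over $\RR$, the real inclusion $r(\im(d_1))\subseteq\im(d_2)$ you use is not a formal consequence of the stated part (1); however, it is established explicitly in the proof of that lemma (the construction of $g$ with $d_2g=r(d_1f)$), and alternatively your extension of the formula $R(\int_\alpha)=\int_{r(\alpha)}$ to all of $C_1(G_1,\RR)$ follows from part (2) alone, since $\int_{r(d_1f)}r_*(\omega)=\int_{d_1f}\omega=0$ and $r_*$ surjects onto $\Omega(G_2)$. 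So the gap is only in the reference, not in the mathematics.
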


\begin{proof} 
	By Lemmas~\ref{Lem: Comm Diag} and~\ref{Lem: Refinement}(1), the refinement map $r$ induces a homomorphism
	\begin{align*}
		J(G_1) \cong C_1(G_1, \ZZ) / & \left(H_1(G_1, \ZZ) \oplus \im(d_1)_\ZZ\right) \\
			& \hspace{1cm} \longrightarrow
			C_1(G_2, \ZZ) / \left(H_1(G_2, \ZZ) \oplus \im(d_2)_\ZZ\right) \cong J(G_2).
	\end{align*}
% Injectivity is an easy consequence of the decomposition \eqref{Eqn: Chain Decomposition} and Lemma~\ref{Lem: Refinement}.
The remaining assertions of the theorem are left to the reader. 
% To check injectivity, let us suppose that $\alpha \in C_1(G_1, \ZZ)$ is any 1-chain. By \eqref{Eqn: Chain Decomposition}, we can write $\alpha = \beta + \gamma$ with $\beta \in H_1(G_1, \RR)$ and $\gamma \in \im(d_1)$. Lemma~\ref{Lem: Refinement} shows that this decomposition is preserved by the refinement homomorphism, as are integral chains. Hence $r(\alpha) \in H_1(G_2, \ZZ) \oplus \im(d_2)_\ZZ$ if and only if $\alpha \in H_1(G_1, \ZZ) \oplus \im(d_1)_\ZZ$. 
% Choose $\beta \in H_1(G_1, \ZZ)$ such that $r(\beta) = \gamma$, a feat that is possible by Lemma~\ref{Lem: Refinement}(1). By Lemma~\ref{Lem: Refinement}(2) and the fact that the integration pairing is trivial on $\im(d)$, we see that for all $\omega \in \Omega(G_1)$, 
%	\begin{align*}
%		\int_{\beta} \omega  = \int_{r(\beta)} r_*(\omega) 
%			= \int_{r(\alpha)} r_*(\omega) = \int_\alpha \omega. 
%	\end{align*}
%Therefore $\alpha$ and $\beta$ differ by an element of $\im(d_1)$, which must be integer-valued since $\alpha$ and $\beta$ have this property. But this means $\alpha$ is trivial in $J(G_1)$, and so the homomorphism $J(G_1) \to J(G_2)$ has trivial kernel.
% The final claim on compatibility of the homomorphisms $\rho_{ij}$ follows immediately from the corresponding property for the refinement homomorphisms $r_{ij}$ on 1-chains. 
\end{proof}

\begin{comment}
\begin{example}
	Let $G$ be the graph describing a regular polygon with $n$ edges (and $n$ vertices) each of length
	$\ell$. If $\alpha$ is a generator of $H_1(G, \ZZ)$, then some easy linear algebra shows 
	$\int_e = \frac{1}{n}\int_\alpha$ for any edge $e$ of $G$. Hence
	$J(G) = \left(\frac{1}{n}\ZZ.\alpha\right) / \ZZ.\alpha \cong \ZZ / n\ZZ$. If we let $G'$ be the 
	refinement of $G$ given by subdividing one edge $e_0$ into two new edges $e_1$ and $e_2$ of 
	lengths $\ell_1$ and $\ell_2$, respectively, then we find $\int_e = \frac{1}{n} \int_\alpha$ for any 
	edge $e \not= e_1, e_2$ and $\int_{e_i} = \frac{\ell_i}{n\ell} \int_\alpha$ for $i = 1,2$. The inclusion
	$J(G) \hookrightarrow J(G')$ is given by $\int_{e_0} \mapsto \int_{e_1} + \int_{e_2}$ and by 
	$\int_{e} \mapsto \int_e$ for $e \not= e_0$. If $\ell_1$ is not rationally commensurate with $\ell$, it follows that
	$J(G') \cong \ZZ^2 \times \ZZ/n\ZZ$.
\end{example}
\end{comment}

%	The compatibility of the injections $\rho_{ij}$ suggests that the collection of all weighted graphs form a directed set, and we could form the direct limit of the weighted Jacobians $J(G)$. However, this object would be much too large to be of any real use. But we can restrict our attention to certain classes of graphs in order to obtain something more manageable. 
	
	Fix a weighted graph $G_0$ and define $R(G_0)$ to be the set of all weighted graphs that admit a common refinement with $G_0$. That is, $G \in R(G_0)$ if there exists a weighted graph $G'$ such that $G \leq G'$ and $G_0 \leq G'$. Then $R(G_0)$ is a directed set under refinement, and the previous theorem shows that $\{J(G) : G \in R(G_0)\}$ is a directed system of groups. We now describe the structure of the ``limit Jacobian:''\footnote{A special case of Theorem~\ref{Thm: Limit Jacobian} has been proved independently by Haase, Musiker, and Yu \cite{HMY_2009}.}
	
\begin{thm} \label{Thm: Limit Jacobian}
	Let $G_0$ be a weighted graph. There is a canonical isomorphism
		\[
			\lim_{\substack{\longrightarrow \\ G \in R(G_0)}} J(G) \simarrow \Omega(G_0)^* / H_1(G_0, \ZZ).
		\]
\end{thm}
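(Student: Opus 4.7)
The plan is to construct an explicit map $\Phi$ and show it is an isomorphism. For any $G \in R(G_0)$, pick a common refinement $G'$ of $G$ and $G_0$; combining the canonical inclusion $J(G) = \Omega(G)^\sharp/H_1(G, \ZZ) \hookrightarrow \Omega(G)^*/H_1(G, \ZZ)$ with two applications of Lemma~\ref{Lem: Refinement}(3) (one for $G \leq G'$, one for $G_0 \leq G'$) yields a homomorphism $\varphi_G: J(G) \to \Omega(G_0)^*/H_1(G_0, \ZZ)$. Independence from the choice of common refinement and compatibility with the transition maps $\rho$ of Theorem~\ref{Thm: compatibility} both follow from the functoriality $r_{31} = r_{32}  r_{21}$, so the $\varphi_G$ assemble into a well-defined map $\Phi: \varinjlim J(G) \to \Omega(G_0)^*/H_1(G_0, \ZZ)$.

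Injectivity of $\Phi$ is immediate: any element of the direct limit mapping to zero is represented by some $\xi \in J(G)$ whose image in $\Omega(G)^*/H_1(G, \ZZ)$ vanishes, and since $J(G) \hookrightarrow \Omega(G)^*/H_1(G, \ZZ)$ is an inclusion by construction, $\xi = 0$.

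The heart of the argument, and the step I expect to be the main obstacle, is surjectivity. I will construct, for each $\beta = \sum_{e \in E(G_0)} \beta_e \cdot e \in H_1(G_0, \RR) \cong \Omega(G_0)^*$, a refinement $G'$ of $G_0$ and an integer chain $\alpha \in C_1(G', \ZZ)$ with $\varphi_{G'}([\alpha]) = [\beta]$. Build $G'$ as follows: for each edge $e$ with $\beta_e \notin \ZZ$, subdivide $e$ at a new vertex $y_e$ into sub-edges $e^{(1)}$ of length $\{\beta_e\}\ell(e)$ and $e^{(2)}$ of length $(1 - \{\beta_e\})\ell(e)$, where $\{\cdot\}$ denotes fractional part; leave undivided any edge with $\beta_e \in \ZZ$. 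Define $\alpha \in C_1(G',\ZZ)$ to take the value $\lceil \beta_e \rceil$ on $e^{(1)}$, $\lfloor \beta_e \rfloor$ on $e^{(2)}$, and $\beta_e$ on any undivided edge. A direct computation shows that the $0$-chain $f \in C_0(G', \RR)$ determined by $f|_{V(G_0)} \equiv 0$ and $f(y_e) = \{\beta_e\}(1 - \{\beta_e\})\ell(e)$ satisfies $df = \alpha - r(\beta)$ edge-by-edge (the sub-edge coefficients reduce to $1 - \{\beta_e\}$ and $-\{\beta_e\}$, respectively). Hence by Lemma~\ref{Lem: Perfect Pairing} the functionals $\int_\alpha$ and $\int_{r(\beta)}$ coincide on $\Omega(G')$, which is precisely the statement $\varphi_{G'}([\alpha]) = [\beta]$, completing the surjectivity argument.

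The conceptual obstacle in the surjectivity step is that an arbitrary real cycle $\beta$ typically has no representative in $\Omega(G_0)^\sharp$, so the construction must use the freedom provided by choosing sub-edge lengths to ``absorb'' the fractional parts $\{\beta_e\}$ into the geometry of $G'$, leaving an integral chain on $G'$ to carry the combinatorial data; identifying this correct subdivision and labeling is the one genuinely new idea in the proof.
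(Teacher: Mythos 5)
Your proposal is correct, and its overall skeleton matches the paper's: both set up the maps via Lemma~\ref{Lem: Refinement}(3) and the inclusion $J(G)=\Omega(G)^\sharp/H_1(G,\ZZ)\hookrightarrow\Omega(G)^*/H_1(G,\ZZ)$, get injectivity for free, and prove surjectivity by passing to a suitable refinement. Where you genuinely diverge is in the surjectivity construction. The paper fixes a spanning tree $T$ of $G_0$, represents an arbitrary class by $\sum_j t_j\int_{\alpha_{T,e_j}}$ with $t_j\in[0,1)$, and subdivides only the $g$ edges outside $T$, choosing an integer $n_j$ and a length $u_j<\ell(e_j)$ with $n_ju_j=\sum_i t_i\int_{\alpha_i}\omega_j$ so that the chain $\beta=\sum n_je_j'$ has the right periods against the dual basis of harmonic forms. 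You instead take an arbitrary real cycle $\beta\in H_1(G_0,\RR)$, round its edge coefficients, subdivide every edge with non-integral coefficient at the point of length $\{\beta_e\}\ell(e)$, and exhibit the discrepancy $\alpha-r(\beta)$ explicitly as $df$ for a potential $f$ supported at the new vertices, so that $\int_\alpha=\int_{r(\beta)}$ by Lemma~\ref{Lem: Perfect Pairing}. (I checked the edge-by-edge computation: the slopes are indeed $1-\{\beta_e\}$ and $-\{\beta_e\}$, and the chosen subdivision point is exactly the one forced by those values.) Your route is coordinate-free in the sense that it needs no spanning tree or fundamental-cycle basis and the verification is a direct local computation; the paper's route keeps the refinement minimal (only $g$ new vertices) and leans on the duality between fundamental cycles and harmonic forms that it has already set up for later use. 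One small point worth making explicit in your write-up: the independence of $\varphi_G$ from the chosen common refinement uses that $R(G_0)$ is directed (any two common refinements admit a further common refinement), not just the cocycle identity $r_{31}=r_{32}r_{21}$; this is harmless but should be said.
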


Before proving the theorem, let us recall a few definitions and facts regarding cycles from algebraic graph theory. (See \cite{Biggs_Potential_Theory_1997}.) A \textbf{path} in the weighted graph $G$ is a sequence $x_0, e_1, x_1, e_2, \ldots, x_n$ of alternate vertices and edges such that $\{e_i^-, e_i^+\} = \{x_{i-1}, x_i\}$.\footnote{In \cite{Biggs_Potential_Theory_1997}, this is called a \textbf{walk}. There it is called a \textit{path} if $e_{j-1} \not= e_j$ for $j=2, \ldots, n$.} That is, the vertices of the edge $e_i$ are precisely $x_{i-1}$ and $x_i$, but perhaps not in the order dictated by the orientation. A path is \textbf{closed} if $x_0 = x_n$. Given a path $P$ and an edge $e$, we can associate an integer $\epsilon(P, e)$ that is the number of times the sequence $e^-, e, e^+$ occurs in the path $P$ less the number of times the sequence $e^+, e, e^-$ occurs in $P$. Intuitively, it is the number of times the path~$P$ traverses the edge $e$ counted with signs depending on the orientation of~$e$. Define a $1$-chain associated to the path $P$ by $\alpha_P = \sum_e \epsilon(P, e).e$. If $P$ is a closed path, then $\alpha_P$ is a $1$-cycle.

Let $T$ be a spanning tree in $G$. Given any edge $e$ in the complement of $T$, we can define a 
% fundamental 
$1$-cycle associated to it as follows. There is a unique minimal path in $T$ beginning at $e^+$ and ending at $e^-$. If we preface this path with $e^-, e$, we get a closed path $P$. Note that $|\epsilon(P, e)| \leq 1$ for all edges $e$. We call $\alpha_P$ (as defined above) the \textbf{fundamental cycle} associated to $T$ and $e$, and denote it by $\alpha_{T, e}$. For a fixed spanning tree $T$, the cycles $\{\alpha_{T, e}: e \in E(G) \smallsetminus E(T)\}$ form a basis of $H_1(G, \ZZ)$.

\begin{proof}[Proof of Theorem~\ref{Thm: Limit Jacobian}]
%	 Let $G$ be any weighted graph for the moment. 
First note that when $G$ varies over $R(G_0)$, a natural directed system is formed by the groups $\Omega(G)^* / H_1(G, \ZZ)$ and the isomorphisms of Lemma~\ref{Lem: Refinement}(3).  Since $\Omega(G)^\sharp \subset \Omega(G)^*$ for any weighted graph~$G$, it follows that there is also a canonical injective homomorphism $J(G) \hookrightarrow \Omega(G)^* / H_1(G, \ZZ)$.
%Now let $G_1, G_2 \in R(G_0)$ be two weighted graphs, and choose a common refinement $G_3$. Then the injections $\rho_{ij}: J(G_i) \to J(G_j)$ of Theorem~\ref{Thm: compatibility} and the isomorphisms of Lemma~\ref{Lem: Refinement}(3) fit together in a commutative diagram: 
%	\begin{equation} \label{Eq: Limit diagram}
%		\xymatrix{
%			J(G_1) \ar@{^{(}->}[r]^{\rho_{13}} \ar@{^{(}->}[d] & J(G_3) \ar@{^{(}->}[d] 
%				& J(G_2) \ar@{_{(}->}[l]_{\rho_{23}} \ar@{^{(}->}[d] \\
%			\Omega(G_1)^* / H_1(G_1, \ZZ) \ar[r]^{\sim} & \Omega(G_3)^* / H_1(G_3, \ZZ) & 
%			\Omega(G_2)^* / H_1(G_2, \ZZ) \ar[l]_{\sim}
%		}
%	\end{equation}
Direct limit is an exact functor, so the maps $J(G) \hookrightarrow \Omega(G)^* / H_1(G, \ZZ)$ induce a canonical injective homomorphism
	\be \label{Eq: can injection}
		\lim_{\longrightarrow} J(G) \hookrightarrow \lim_{\longrightarrow} \Omega(G)^* / H_1(G, \ZZ),
	\ee
where the limits are over all weighted graphs $G \in R(G_0)$. 

Choose a spanning tree $T$ for $G_0$, and enumerate the edges in the complement of $T$ by $e_1, \ldots, e_g$. Write $\alpha_j = \alpha_{T, e_j}$ for the $j^{\operatorname{th}}$ fundamental cycle associated to $T$. The fundamental cycles $\alpha_1, \ldots, \alpha_g$ form a basis for $H_1(G_0, \ZZ)$, and for any refinement $G$ of $G_0$, the cycles $r(\alpha_1), \ldots, r(\alpha_g)$ form a basis for $H_1(G, \ZZ)$, where $r: C_1(G_0, \ZZ) \to C_1(G, \ZZ)$ is the refinement homomorphism. To prove surjectivity of the map in \eqref{Eq: can injection}, it suffices to show that for any real numbers $t_1, \ldots, t_g \in [0,1)$, there is a refinement $G$ of $G_0$ and a 1-chain $\beta \in C_1(G,\ZZ)$ such that $\int_\beta = \int_{\sum_j t_j r(\alpha_{T, e_j})}$. 

Define a basis for $\Omega(G_0)$ as follows. If $\alpha_j = \sum_e \alpha_j(e).e$, set $\omega_j = \sum_e \alpha_j(e).de$.  Now for each $j = 1, \ldots, g$, choose an integer $n_j$ and a positive real number $u_j < \ell(e_j)$ so that 
	\[
		n_ju_j = \sum_{i=1}^g t_i \int_{\alpha_i}  \omega_j .
	\]
Subdivide the edge $e_j$ into two edges $e_j'$ and $e_j''$ such that 
	\begin{itemize}
		\item $p_j$ is the new vertex with $\iota(e_j') = (e_j^-, p_j)$ and $\iota(e_j'') = (p_j, e_j^+)$, and
		\item $\ell(e_j') = u_j$ and $\ell(e_j'') = \ell(e_j) - u_j$.
	\end{itemize}
Let $G$ be the refinement of $G_0$ given by adjoining all of the vertices $p_1, \ldots, p_g$ to $V(G_0)$ and by subdividing the edges $e_1, \ldots, e_g$ in the manner described above. Finally, set $\beta = \sum n_i e_i'$.

As the edge $e_j$ appears in the cycle $\alpha_{i}$ only if $i=j$, we see that 
	\begin{equation*}
		\int_\beta r_*(\omega_j) = \sum_i n_i \int_{e_i'} r_*(\omega_j) = n_j u_j  
			= \sum_i t_i \int_{\alpha_i}  \omega_j = \int_{\sum_i t_i  r\left(\alpha_i\right)} r_*(\omega_j).
	\end{equation*}
The final equality occurs because $r$ and $r_*$ are compatible with the integration pairing. Since this chain of equalities holds for each $j$, and since $r_*(\omega_1), \ldots, r_*(\omega_g)$ is a basis for $\Omega(G)^*$, we conclude that $\int_\beta = \int_{\sum_j t_j r(\alpha_{j})}$.

Finally, we note that the direct limit on the right of \eqref{Eq: can injection} is canonically isomorphic to any particular term in the limit. In particular, it is isomorphic to $\Omega(G_0)^* / H_1(G_0,\ZZ)$, which completes the proof.
\end{proof}

% ------------------------------------------------------------
% ------------------------------------------------------------
	
\subsection{Properties of the Weighted Jacobian}
\label{Sec: Commensurable}

	In this section we deduce a few basic properties of the structure of weighted Jacobians. Our main result 
	states that the Jacobian of a weighted graph is finite if and only if the edge lengths in each maximal 2-connected 
	subgraph are commensurable. 
	
\begin{thm} \label{Thm: Edge Scaling}
	Let $G$ be a weighted graph, and let $G'$ be the weighted graph obtained from $G$ by scaling all edge lengths
	simultaneously by some positive real number. Then $J(G)$ and $J(G')$ are canonically isomorphic. 
\end{thm}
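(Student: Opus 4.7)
The cleanest route is through the Picard group description from Theorem~\ref{Thm: Jac/ Pic}: I would prove instead that $\Pic(G) = \Pic(G')$ on the nose (not merely up to isomorphism) and then conclude by the canonical isomorphism $J(G) \cong \Pic(G)$. Let $c > 0$ be the scaling constant, so $\ell'(e) = c\,\ell(e)$ for every edge~$e$.

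First I would observe that, since the vertex and edge sets of $G$ and $G'$ coincide, the groups $C_0$, $C_1$, $\Div$, and $\Div^0$ are literally identical for the two weighted graphs. It then remains to check that the subgroup $\Prin = d^*(\im(d)_\ZZ)$ is unchanged under scaling. This reduces to two separate observations:

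\begin{itemize}
\item The formula for the adjoint $d^*\colon C_1(G,\RR)\to C_0(G,\RR)$ given in \S\ref{Sec: Wtd graphs}, namely
$(d^*\alpha)(x) = \sum_{e^+=x}\alpha(e) - \sum_{e^-=x}\alpha(e)$,
makes no reference to $\ell$, so $d^*$ is the same operator for $G$ and $G'$. (If one is uncomfortable accepting the formula as given, one checks directly that $\ip{df}{\alpha}_1$ equals $\sum_e[f(e^+)-f(e^-)]\alpha(e)$, because the factor of $\ell(e)$ in the inner product on $C_1$ cancels the $1/\ell(e)$ in the definition of $df$.)
\item The differential $d$ for $G'$ is precisely $(1/c)$ times the differential for $G$, so $\im(d)$ and $\im(d')$ are the same real subspace of $C_1(G,\RR)$. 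Intersecting with $C_1(G,\ZZ)$ gives $\im(d)_\ZZ = \im(d')_\ZZ$.
\end{itemize}
Combining the two bullets, $\Prin(G') = (d')^*\bigl(\im(d')_\ZZ\bigr) = d^*\bigl(\im(d)_\ZZ\bigr) = \Prin(G)$. Hence $\Pic(G') = \Pic(G)$, and Theorem~\ref{Thm: Jac/ Pic} supplies the desired canonical isomorphism $J(G') \cong \Pic(G') = \Pic(G) \cong J(G)$.

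There is really no obstacle here: the content of the argument is the elementary scaling behavior of $d$ and $d^*$, and the statement becomes transparent once one passes to the Picard model. (As a sanity check, one can also verify it directly from the original definition $J(G) = \Omega(G)^\sharp / H_1(G,\ZZ)$: the harmonicity condition \eqref{Eq: Omega} and the lattice $H_1(G,\ZZ)\subset C_1(G,\ZZ)$ are manifestly independent of $\ell$, while the integration pairing rescales by the uniform factor $c$; thus inside the common vector space $\Omega(G)^* = \Omega(G')^*$ the subgroups defining $J(G')$ are just $c$ times those defining $J(G)$, and multiplication by $c$ yields the canonical isomorphism. This alternative route matches the Picard-group conclusion but is slightly less tidy.)
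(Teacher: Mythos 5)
Your proposal is correct and follows essentially the same route as the paper: the paper's proof also reduces to the Picard group via Theorem~\ref{Thm: Jac/ Pic}, identifies the vertex and edge sets, and verifies $\Div^0(G)=\Div^0(G')$ and $\Prin(G)=\Prin(G')$, which is exactly the content of your two bullet points about $d^*$ being independent of $\ell$ and $\im(d)$ being scale-invariant. Your parenthetical check directly from the definition $J(G)=\Omega(G)^\sharp/H_1(G,\ZZ)$ is a nice supplementary observation but not needed.
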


\begin{proof}
	Since the Jacobian group is isomorphic to the Picard group, it suffices to prove the result for Picard groups. 
Identify the vertex and edge sets of $G$ and $G'$. Then one verifies that $\Div^0(G) = \Div^0(G')$ and $\Prin(G) = \Prin(G')$. 
\end{proof}

\begin{comment}
\begin{proof}
	We identify the vertex and edge sets of $G$ and $G'$. Let us write $\ell':E(G') \to \RR_{>0}$ for the length 
	function on $G'$, and suppose $t$ is a real number such that $\ell'(e) = t\ell(e)$ for every edge 
	$e \in E(G)=E(G')$. 
	
	Since the Jacobian group is isomorphic to the Picard group, it suffices to prove the result for Picard groups. 
	Clearly $\Div^0(G) = \Div^0(G')$. Given $f \in C_0(G, \RR)$, define $f' \in C_0(G',\RR)$ by $f'(x) = tx$.
	Then one checks that $df' = df$. In particular, the association $f \mapsto f'$ shows that
	$\Prin(G) = \Prin(G')$. Hence $\Pic(G) = \Pic(G')$.
\end{proof}
\end{comment}

\begin{thm} \label{Thm: Direct Product}
	Suppose $G$ is a weighted graph that can be written as the pointed sum of two subgraphs $G_1$ and $G_2$.
	Then there is a canonical isomorphism $J(G) \simarrow J(G_1) \times J(G_2)$.
\end{thm}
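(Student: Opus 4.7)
My plan is to reduce to the corresponding statement about Picard groups via the isomorphism $J(G) \cong \Pic(G)$ of Theorem~\ref{Thm: Jac/ Pic}, and then exhibit a direct splitting
\[
\Pic(G) \cong \Pic(G_1) \oplus \Pic(G_2).
\]
Let $p$ denote the common vertex, so that $V(G) = V(G_1) \cup V(G_2)$ with $V(G_1) \cap V(G_2) = \{p\}$ and $E(G) = E(G_1) \sqcup E(G_2)$ (no edge of $G$ joins a vertex of $V(G_1) \setminus \{p\}$ to a vertex of $V(G_2) \setminus \{p\}$). The core of the proof is to leverage this separation: the adjoint differential $d^*$ on $G$ decouples away from $p$ since, for any $1$-chain $\alpha_i$ supported on $E(G_i)$, one has $(d_G^*\alpha_i)(x) = (d_{G_i}^*\alpha_i)(x)$ for every $x \in V(G_i)\setminus\{p\}$ and $(d_G^*\alpha_i)(x) = 0$ for $x \in V(G_j)\setminus\{p\}$ with $j \ne i$.

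Next I would verify the divisor-group decomposition. Writing any $D \in \Div^0(G)$ as $D_1' + D_2'$ with $D_i' \in \Div(G_i)$ (possible since $E(G)$ is a disjoint union and $V(G_1)\cap V(G_2)=\{p\}$; the splitting is unique up to moving integer mass at $p$), the canonical choice is
\[
D_1 := D_1' - \deg(D_1')\cdot p, \qquad D_2 := D_2' + \deg(D_1')\cdot p,
\]
which lies in $\Div^0(G_1) \oplus \Div^0(G_2)$ because $\deg(D)=0$. This yields a canonical isomorphism $\Div^0(G) \simarrow \Div^0(G_1) \oplus \Div^0(G_2)$.

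Then I would show this same splitting identifies $\Prin(G)$ with $\Prin(G_1) \oplus \Prin(G_2)$. Given $f: V(G) \to \RR$ with $df \in C_1(G,\ZZ)$, let $f_i = f|_{V(G_i)}$; then $df_i \in C_1(G_i,\ZZ)$, so $d_{G_i}^* df_i \in \Prin(G_i)$. By the decoupling observation above, $d_G^*df$ decomposes in the canonical way as $d_{G_1}^*df_1 + d_{G_2}^*df_2$ (the values at $p$ already make each summand degree zero, so no mass-shift is needed). Conversely, given $\phi_i = d_{G_i}^*df_i \in \Prin(G_i)$, translate $f_1$ by the constant $f_2(p)-f_1(p)$ (which leaves $df_1$ unchanged) so that $f_1(p)=f_2(p)$; the functions glue to a single $f$ on $V(G)$ with $d_G^*df = \phi_1 + \phi_2$. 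Taking the quotient by the previous step then yields $\Pic(G) \cong \Pic(G_1) \oplus \Pic(G_2)$, and Theorem~\ref{Thm: Jac/ Pic} delivers the claim.

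I do not expect a substantive obstacle. The only place requiring care is the basepoint bookkeeping: one must check that the normalization by $-\deg(D_1')\cdot p$ in the divisor splitting is automatically respected by principal divisors (because $\sum_x (d^*\beta)(x)=0$ for any $1$-chain $\beta$, both for $G$ and for each $G_i$), and that shifting $f_1$ by a constant before gluing is harmless. Everything else is a mechanical unwinding of the definitions, and one could alternatively run the same argument directly on $\Omega(G)^\sharp/H_1(G,\ZZ)$ using the analogous splittings $\Omega(G) = \Omega(G_1) \oplus \Omega(G_2)$ and $H_1(G,\ZZ) = H_1(G_1,\ZZ) \oplus H_1(G_2,\ZZ)$, which follow from the same decoupling principle.
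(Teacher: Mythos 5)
Your proposal is correct and follows essentially the same route as the paper: reduce to Picard groups via $J(G)\cong\Pic(G)$, split $\Div^0(G)$ canonically using the separating vertex $p$ (your mass-shift normalization is the same decomposition as the paper's rewriting of divisors in terms of $x-p$), and observe that $\Prin(G)$ splits compatibly. The only difference is that you spell out the restriction-and-gluing argument for principal divisors, which the paper leaves as a one-line remark.
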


\begin{proof}
	Again, we will work with the Picard group rather than the Jacobian. Let $G_1 \cap G_2 = \{p\}$, so
	that $p$ is the separating vertex. Then $\Div^0(G)$ is canonically isomorphic to 
	$\Div^{0}(G_1) \times \Div^0(G_2)$ using the following observation:
		\[
			\sum_{x \in V(G)} D(x).x = \sum_{x \in V(G)} D(x).(x-p) =
				\sum_{x \in V(G_1)} D(x).(x-p) + \sum_{x \in V(G_2)} D(x).(x-p).
		\]
	Moreover, $\Prin(G)$ splits according to this decomposition as well. 
	% Indeed, if $df \in \im(d)_\ZZ$, write $f_1$ and $f_2$ for the restrictions of $f$ to $G_1$ and $G_2$,
	% respectively. If $d_i, d_i^*$ are the associated differential map and its adjoint on $G_i$, then 
	% evidently $d_if_i \in \im(d_i)_\ZZ$ for $i=1,2$, and $d^*d(f) = d_1^*d_1(f_1) + d_2^*d_2(f_2)$. 
	It follows immediately that $\Pic(G) \simarrow \Pic(G_1) \times \Pic(G_2)$.
\end{proof}

A weighted graph will be called \textbf{2-connected} if its geometric realization is topologically 2-connected; i.e., if it cannot be disconnected by deleting a single point.\footnote{For unweighted graphs, our definition of 2-connectivity differs from the convention adopted by many graph theorists in the special case where $|V(G)|=2$.}    
Note that 2-connectivity is stable under refinement. 
% Note that, with this convention, the graph with two vertices joined by a single edge is 2-connected. 
An inductive argument on the number of vertices shows that any weighted graph can be decomposed as a union of its maximal $2$-connected subgraphs and of its bridges. Combining Theorem~\ref{Thm: Direct Product} with the fact that the Jacobian of a tree is trivial yields

\begin{cor} \label{Cor: Decompose Product}
	Let $G$ be a weighted graph with maximal $2$-connected subgraphs $G_1, \ldots, G_s$.
	% and bridges $I_1, \ldots, I_r$, where each $I_j$ is a single edge with distinct endpoints. 
	Then there is a canonical isomorphism $J(G) \simarrow J(G_1) \times \cdots \times J(G_s)$.
\end{cor}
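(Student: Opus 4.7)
The plan is to prove the corollary by induction on the number of blocks of $G$, where by a \emph{block} I mean either a maximal $2$-connected subgraph or a bridge. The sentence preceding the corollary asserts that $G$ decomposes as a union of such blocks meeting at cut vertices, so this is the natural inductive parameter. The base case is when $G$ has a single block: if that block is $2$-connected, the statement is vacuous (with $s=1$), while if that block is a bridge, I need to check separately that $J(G)=0$, matching the empty product on the right-hand side.

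For the inductive step, suppose $G$ has at least two blocks. Then $G$ must possess a cut vertex $p$, and the block decomposition allows me to write $G$ as a pointed sum $G = H_1 \cup H_2$ with $H_1 \cap H_2 = \{p\}$, where each $H_i$ is a proper subgraph of $G$ with strictly fewer blocks. Theorem \ref{Thm: Direct Product} then supplies a canonical isomorphism $J(G) \simarrow J(H_1) \times J(H_2)$. Applying the inductive hypothesis to $H_1$ and $H_2$ separately decomposes each factor as a product of Jacobians of its maximal $2$-connected subgraphs; since the maximal $2$-connected subgraphs of $G$ are precisely the disjoint union of those of $H_1$ and $H_2$ (no $2$-connected subgraph can cross the cut vertex $p$), these products reassemble into $J(G_1) \times \cdots \times J(G_s)$ as required.

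To close the argument I need the fact that the Jacobian of a tree is trivial, which handles both the base case of a single bridge and the reduction showing that bridges contribute no factors in the product decomposition. This is immediate from the definition in \S\ref{Sec: Wtd graphs}: a tree has first Betti number $0$, so $H_1(G,\RR) = 0$, and by Lemma \ref{Lem: Perfect Pairing} also $\Omega(G) = 0$; hence $\Omega(G)^\sharp = 0$ and $J(G) = 0$. Alternatively, on the Picard side (Theorem \ref{Thm: Jac/ Pic}), every degree-zero divisor on a tree is principal, since paths between vertices produce the required $1$-chains whose $d^*$-image is $x-y$.

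The main obstacle is really just a bookkeeping issue: one must verify that the recursive splitting at cut vertices yields exactly the same collection of maximal $2$-connected subgraphs as the block decomposition of $G$ itself, and that each bridge in $G$ appears either as one of the $H_i$ at some stage (contributing trivially) or as part of a subtree attached at $p$ that gets peeled off before we reach a $2$-connected block. Both facts are standard in the theory of block-cut trees, and once granted, the canonical isomorphisms from Theorem \ref{Thm: Direct Product} compose to give the stated canonical isomorphism $J(G) \simarrow J(G_1) \times \cdots \times J(G_s)$, independent of the order in which cut vertices are resolved.
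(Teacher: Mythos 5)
Your proof is correct and takes essentially the same route as the paper: the authors likewise decompose $G$ into its maximal $2$-connected subgraphs and bridges (via induction), apply Theorem~\ref{Thm: Direct Product} at the cut vertices, and use the triviality of the Jacobian of a tree to dispose of the bridge blocks. Your explicit verification that $J(\text{tree})=0$ via $H_1=0$ and Lemma~\ref{Lem: Perfect Pairing} fills in a detail the paper leaves implicit, but the argument is the same.
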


	Let $a_1,\ldots,a_n$ be positive real numbers.  Then $a_1,\ldots,a_n$ are called \textbf{commensurable} if
	 $a_i / a_j \in \QQ$ for all indices $1 \leq i,j \leq n$. This condition is equivalent to saying there exists a 
	 positive real number $t$ and natural numbers $k_1, \ldots, k_n$ such that $a_i = t k_i$ for each 
	 $i=1, \ldots, n$. 
	 % (Put $q_i = a_i / a_1 \in \QQ$, write $N$ for the least common multiple of the denominators
%	 of the $q_i$, set $t = a_1 / N$, and let $k_i = Nq_i \in \ZZ$.) 
	 We will say that a collection of edges
	 of a weighted graph is \textbf{commensurable} if their lengths are commensurable. 

\begin{thm} \label{Thm: Commensurate}
If $G$ is a weighted graph, then the Jacobian group $J(G)$ is finite if and only if the edges of each maximal 2-connected subgraph of $G$ are commensurable. 
\end{thm}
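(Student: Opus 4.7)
The plan is to reduce to the 2-connected case and then handle the two directions separately.

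\emph{Reduction.} By Corollary~\ref{Cor: Decompose Product}, $J(G) \cong J(G_1) \times \cdots \times J(G_s)$, where the $G_i$ are the maximal 2-connected subgraphs of $G$; hence $J(G)$ is finite iff each $J(G_i)$ is finite. Since the commensurability hypothesis is stated subgraph-by-subgraph, we may assume throughout that $G$ itself is 2-connected.

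\emph{``If'' direction.} Assume all edges of $G$ have pairwise commensurable lengths. Using Theorem~\ref{Thm: Edge Scaling}, rescale so that every edge length is a positive integer, and let $G'$ be the refinement in which each edge of length $n$ is subdivided into $n$ unit-length edges. Then $G'$ is a weighted graph with every edge of length $1$, i.e.\ an unweighted graph, so $J(G')$ coincides with the classical graph Jacobian, which is finite (its order equals the number of spanning trees of $G'$). By Theorem~\ref{Thm: compatibility}, refinement induces an injection $J(G) \hookrightarrow J(G')$, so $J(G)$ is finite.

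\emph{``Only if'' direction.} Using Lemma~\ref{Lem: Perfect Pairing}, identify $\Omega(G)^*$ with $H_1(G, \RR)$ via the weighted inner product; under this identification, $\int_e \in \Omega(G)^\sharp$ corresponds to the orthogonal projection $\pi(e)$ of the $1$-chain $e$ onto $H_1(G, \RR)$. Thus $\Omega(G)^\sharp$ is the $\ZZ$-span of $\{\pi(e): e \in E(G)\}$ inside $H_1(G, \RR)$, and since it always contains the full-rank lattice $H_1(G, \ZZ)$, the quotient $J(G)$ is finite iff $\Omega(G)^\sharp$ has rank exactly $g$; equivalently, iff $\pi(e) \in H_1(G,\ZZ) \otimes \QQ$ for every edge $e$ of $G$. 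The self-adjointness of $\pi$ gives the reciprocity identity
\[
\pi(e)(f)\,\ell(f) \;=\; \langle \pi(e), f\rangle \;=\; \langle \pi(e), \pi(f)\rangle \;=\; \pi(f)(e)\,\ell(e),
\]
so whenever $\pi(e)(f) \neq 0$ and all $\pi$-values are rational, $\ell(e)/\ell(f) = \pi(e)(f)/\pi(f)(e) \in \QQ$. Hence any two edges connected by the symmetric ``non-orthogonality'' relation $e \sim f \iff \pi(e)(f) \neq 0$ have commensurable lengths, and the same then follows for any two edges in the same connected component of this relation.

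\emph{Main obstacle.} The crux is to show that, for 2-connected $G$, the non-orthogonality relation is connected on $E(G)$. I will argue by contradiction: suppose $E(G) = E_1 \sqcup E_2$ is a nontrivial partition with $\pi(e)(f) = 0$ for all $e \in E_1$ and $f \in E_2$. Letting $V_i \subseteq C_1(G,\RR)$ denote the span of $E_i$, this vanishing forces $\pi(V_i) \subseteq V_i$, producing a direct sum decomposition $H_1(G,\RR) = W_1 \oplus W_2$ with $W_i := H_1(G,\RR) \cap V_i$. Each $W_i$ is nonzero because topological 2-connectivity rules out bridges, so $\pi(e) \neq 0$ for every edge. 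But 2-connectivity implies 2-edge-connectivity, and hence any $e \in E_1$ and $f \in E_2$ lie on a common simple cycle $C$. The decomposition $H_1 = W_1 \oplus W_2$ would then force $C = C|_{E_1} + C|_{E_2}$ with each restriction a cycle; however, at any ``transition vertex'' $v$ of $C$ where an $E_1$-edge meets an $E_2$-edge, a direct computation gives $d^*(C|_{E_1})(v) = \pm 1 \neq 0$, contradicting $C|_{E_1} \in W_1 \subseteq H_1(G,\RR)$. This contradiction completes the proof.
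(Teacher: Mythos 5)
Your reduction via Corollary~\ref{Cor: Decompose Product} and your ``if'' direction coincide with the paper's argument (rescale by Theorem~\ref{Thm: Edge Scaling}, subdivide to unit lengths, inject $J(G)\hookrightarrow J(G')$ by Theorem~\ref{Thm: compatibility}, and invoke the spanning-tree count for unit-length graphs). The ``only if'' direction is where you genuinely diverge. The paper works with $\Pic(G)$ and potential theory on the metric graph: finiteness makes each divisor $x-z$ torsion, the resulting integer-slope functions are rational multiples of the potential kernel $j_z(x,\cdot)$, and the symmetry $j_z(x,y)=j_z(y,x)$ together with Lemma~\ref{Lem: 2-connected} (which says $j_z(x,y)\neq 0$ on a $2$-connected graph) forces $\ell(e)/\ell(e')\in\QQ$ for edges sharing a vertex. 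You instead stay inside the finite-dimensional linear algebra of \S2: identifying $\int_e$ with $\pi(e)$ via Lemma~\ref{Lem: Perfect Pairing}, finiteness of $J(G)$ makes every $\pi(e)$ a rational combination of integral cycles, hence its edge coordinates are rational, and the self-adjointness identity $\pi(e)(f)\,\ell(f)=\pi(f)(e)\,\ell(e)$ --- in effect a discrete avatar of the $j$-function symmetry the paper exploits --- converts this rationality into commensurability along the non-orthogonality relation; connectivity of that relation for $2$-connected $G$ is then handled by your cycle-splitting contradiction. This is a legitimate alternative route: it avoids metric-graph potential theory entirely, at the price of needing a graph-theoretic connectivity input that the paper's proof never requires.

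One justification in that last step needs repair. You derive the existence of a simple cycle through $e\in E_1$ and $f\in E_2$ from $2$-\emph{edge}-connectivity, but that implication is false: two triangles glued at a single vertex form a bridgeless graph in which edges from different triangles lie on no common simple cycle. What you actually have is the paper's topological $2$-connectivity (the glued triangles are not $2$-connected in this sense), and under that hypothesis the standard fact does hold: any two edges of a $2$-connected multigraph with at least two edges lie on a common cycle (loops are excluded by $2$-connectivity, and the two-vertex parallel-edge case is immediate). With the hypothesis cited correctly, the rest of your contradiction is sound: since $H_1(G,\RR)=W_1\oplus W_2$ with $W_i$ contained in the coordinate subspaces $V_i$, the restriction $C|_{E_1}$ of such a cycle $C$ would have to lie in $H_1(G,\RR)$, yet $d^*(C|_{E_1})(v)=\pm 1$ at a transition vertex $v$. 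So the proof stands once you replace ``$2$-edge-connectivity'' by ``$2$-connectivity'' and cite (or prove) the common-cycle fact in that form.
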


By way of contrast, we also have the following result:

\begin{thm}
\label{Thm: Maximal Rank}
	Let $G$ be a weighted graph. Suppose that the edge lengths in each maximal 2-connected component of $G$ are $\QQ$-linearly independent. Then $J(G)$ is free Abelian of rank $\#V(G) - \#Br(G) - 1$, where $Br(G)$ is the set of bridges of $G$. 
\end{thm}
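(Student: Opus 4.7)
The plan is to reduce to the 2-connected case and then use $\QQ$-linear independence to show that the group of principal divisors vanishes.

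\textbf{Reduction.} By Corollary~\ref{Cor: Decompose Product}, $J(G) \cong J(G_1) \times \cdots \times J(G_s)$, where $G_1, \ldots, G_s$ are the maximal 2-connected subgraphs of $G$; tree-like pieces such as bridges contribute trivially. Granting the main claim below---that each $J(G_i)$ is free abelian of rank $n_i - 1$, where $n_i = \#V(G_i)$---the direct product is free abelian of total rank $\sum_{i=1}^s (n_i - 1)$. To match the stated formula, I would verify the combinatorial identity
\[
\sum_{i=1}^s n_i = \#V(G) + s - \#Br(G) - 1
\]
via a double count of block/vertex incidences in the block-cut tree of $G$. That tree has $s + \#Br(G) + c$ nodes and $s + \#Br(G) + c - 1$ edges, where $c$ denotes the number of cut vertices; summing vertex counts over all blocks gives $\sum n_i + 2\#Br(G)$ on one side and $(\#V(G) - c) + (s + \#Br(G) + c - 1)$ on the other, which rearranges to the identity above.

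\textbf{2-connected case.} Now assume $G$ is 2-connected with $\QQ$-linearly independent edge lengths. By Theorem~\ref{Thm: Jac/ Pic}, $J(G) \cong \Pic(G) = \Div^0(G)/\Prin(G)$, and $\Div^0(G) \cong \ZZ^{n-1}$ is visibly free abelian of rank $n-1$. It therefore suffices to prove $\Prin(G) = 0$, or equivalently $\im(d)_\ZZ = 0$. Let $\alpha \in \im(d)_\ZZ$. By the orthogonal Hodge decomposition \eqref{Eqn: Chain Decomposition} under the length-weighted inner product on $C_1(G, \RR)$, $\alpha \perp H_1(G, \RR)$, so for any $\gamma \in H_1(G, \ZZ)$,
\[
0 = \ip{\alpha}{\gamma} = \sum_{e \in E(G)} \alpha(e)\,\gamma(e)\,\ell(e).
\]
The products $\alpha(e)\gamma(e)$ are integers, so $\QQ$-linear independence of $\{\ell(e) : e \in E(G)\}$ forces $\alpha(e)\gamma(e) = 0$ for every $e$. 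Since $G$ is 2-connected it contains no bridges, so for every edge $e$ there exists a simple integral cycle $\gamma \in H_1(G, \ZZ)$ with $\gamma(e) = \pm 1 \neq 0$. Hence $\alpha(e) = 0$ for every $e$, and $\alpha = 0$.

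\textbf{Main obstacle.} The decisive step is the integrality argument in the 2-connected case: because both $\alpha$ and $\gamma$ have integer coefficients, the single relation $\sum \alpha(e)\gamma(e)\ell(e) = 0$ splits termwise under $\QQ$-linear independence of the lengths. Everything else---the reduction via Corollary~\ref{Cor: Decompose Product}, the isomorphism $J(G) \cong \Pic(G)$ from Theorem~\ref{Thm: Jac/ Pic}, and the combinatorial identity---amounts to bookkeeping using machinery already established in the paper.
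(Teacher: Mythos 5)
Your proposal is correct and follows essentially the same route as the paper: reduce to the $2$-connected case via Corollary~\ref{Cor: Decompose Product}, identify $J(G)$ with $\Pic(G)$, and kill $\Prin(G)$ by pairing an integer-slope element against a cycle through each edge and invoking $\QQ$-linear independence of the lengths (your use of the Hodge orthogonality $\ip{\alpha}{\gamma}=0$ with fundamental cycles is the same computation the paper performs by telescoping $f$ around a closed path). The only difference is that you make explicit the block-count identity $\sum_i(n_i-1)=\#V(G)-\#Br(G)-1$, which the paper leaves as implicit bookkeeping.
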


Before giving the proofs, we need a lemma which may be of independent interest.  If $\Gamma$ is a metric (a.k.a.~metrized) graph, recall that the fundamental potential kernel $j_z(x,y)$ is characterized by
 the differential equation $\Delta_x j_z(x,y) = \delta_y(x) - \delta_z(x)$ subject to the initial condition $j_z(z,y)=0$
(see, for example, \cite{Baker_Faber_2006}).
It is a basic fact that $j_z(x,y) \geq 0$ and $j_z(x,y)=j_z(y,x)$ for all $x,y,z \in \Gamma$.

\begin{lem} \label{Lem: 2-connected}
A metric graph $\Gamma$ is $2$-connected if and only if 
$j_z(x,y) > 0$ for all $x,y,z \in \Gamma$ with $x,y \neq z$.
More specifically, $x,y$ belong to the same connected component of
$\Gamma - z$ if and only if $j_z(x,y) > 0$.
\end{lem}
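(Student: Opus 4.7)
The plan is to prove the ``more specifically'' statement first; the main biconditional then follows at once, since $\Gamma$ is $2$-connected precisely when $\Gamma - z$ is connected for every $z \in \Gamma$.

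Suppose first that $x$ and $y$ lie in different components of $\Gamma - z$, and let $C$ be the component containing $x$. Set $h := j_z(\,\cdot\,, y)$; then $h$ is harmonic on $C$ (since $y, z \notin C$), continuous on $\bar C = C \cup \{z\}$, and satisfies $h(z) = 0$. Because $\Gamma$ is connected, the topological boundary of $C$ in $\Gamma$ is exactly $\{z\}$, so the weak maximum principle for harmonic functions on a metric graph gives $\max_{\bar C} h = h(z) = 0$. Combined with the standard non-negativity $h \geq 0$, this forces $h \equiv 0$ on $\bar C$, and in particular $j_z(x,y) = 0$.

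Conversely, assume $x$ and $y$ lie in a common component $C_y$ of $\Gamma - z$ and, for contradiction, that $j_z(x,y) = 0$. Since $h \geq 0$ on $\Gamma$ and $x \in \Gamma \setminus \{y, z\}$, the point $x$ is an interior minimum of the harmonic function $h$, so by the strong minimum principle $h \equiv 0$ on the connected component $K$ of $\Gamma \setminus \{y,z\}$ containing $x$. A short topological argument then gives $y \in \bar K$: the boundary of $K$ in $\Gamma$ is contained in $\{y,z\}$, and since the other components of $\Gamma - z$ are separated from $x$ by $z$, we have $K \subseteq C_y \setminus \{y\}$; if $y \notin \bar K$ then $K$ would be both open and closed in the connected set $C_y$, forcing $K = C_y$ and contradicting $y \in C_y \setminus K$. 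By continuity, $h(y) = 0$. On the other hand, integration by parts on the compact metric graph $\Gamma$ gives $h(y) = \langle h, \Delta h \rangle = \int_\Gamma |dh|^2$, and this is strictly positive because otherwise $dh \equiv 0$ would make $h$ a constant and force $\Delta h \equiv 0 \neq \delta_y - \delta_z$. This contradiction shows $j_z(x,y) > 0$.

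The main obstacle is the topological identification $y \in \bar K$, which requires keeping track of the component structure of both $\Gamma - z$ and $\Gamma - \{y,z\}$. Once this is in place, the argument reduces to a clean application of the standard maximum and minimum principles on metric graphs, together with the positivity of Dirichlet energy for a non-constant continuous function.
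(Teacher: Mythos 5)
Your proof is correct and takes essentially the same route as the paper: both directions rest on the maximum/minimum principle for harmonic functions on the relevant component ($\Gamma - z$, resp.\ $\Gamma \setminus \{y,z\}$) together with strict positivity of the diagonal value $j_z(y,y)$, the only differences being packaging --- you invoke the strong minimum principle abstractly (with a clopen argument to get $y \in \overline{K}$) and rederive $j_z(y,y)>0$ from the Dirichlet energy identity, whereas the paper propagates the zero set explicitly along a path (thereby proving the minimum principle it uses) and quotes that $j_z(y,y)$ is the effective resistance $r(y,z)>0$. One small point: in the forward direction you tacitly assume $x \neq y$ when treating $x$ as an interior minimum on $\Gamma \setminus \{y,z\}$; the case $x = y$ should be dispatched first (it follows at once from your own energy computation, or from $j_z(x,x) = r(x,z) > 0$, exactly as the paper does).
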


\begin{proof}
Suppose that $j_z(x,y) = 0$ with $x,y \neq z$. 
Then $y \neq x$, since $j_z(x,x)$ is the effective resistance between the distinct points $x$ and $z$, which cannot be zero. 
As a function of~$t$, $j_z(x,t)$ is harmonic outside $x$ and $z$, and so is harmonic at $t=y$ and achieves its minimum value of zero there.  By harmonicity, it must be zero 
in a neighborhood of $y$. If $x,y$ belong to the same connected component of $\Gamma - z$, then we can iterate this argument at each point along a path from $y$ to $x$ that does not pass through $z$ in order to conclude that $j_z(x,x) = 0$. 
As already mentioned, this is a contradiction.
(The same argument proves the \textbf{maximum principle} for harmonic functions on a metric graph: a non-constant harmonic function on a connected open set $U \subseteq \Gamma$ cannot achieve its maximum or minimum value on $U$.)

Conversely, if $x,y$ belong to distinct connected components of $\Gamma - z$, then the maximum principle implies that $j_z(x,y)=0$,
since the maximum value of the harmonic function $j_z(x,t)$ on the closure of the connected component $U$ of $\Gamma - z$ containing $y$ must occur at the unique boundary point $z$ of $U$, where the function is zero.
\end{proof}

\begin{proof}[Proof of Theorem~\ref{Thm: Commensurate}]

Suppose first that $J(G)$ is a finite group. By Corollary~\ref{Cor: Decompose Product} the Jacobian of each maximal 2-connected subgraph of $G$ is also finite, and so we may replace $G$ by any one of these subgraphs without loss of generality. 
% We aim to show that the edges of $G$ are commensurable.  
% Our proof of this fact will use some basic facts from potential theory on metric graphs (see, for example, \cite{Baker_Faber_2006}).

As $J(G)$ and the Picard group are isomorphic, we see $\Pic(G)$ is finite. We will show that the quotient of the lengths of any two edges that share a common vertex is a rational number; as our graph is connected, this suffices to show all edges are commensurable. Note that since $G$ is $2$-connected, there are no loop edges. 

Let $e$ be an edge with vertices $x,z$, and let $e'$ be an edge with vertices $y, z$. 
Since $\Pic(G)$ is finite, it follows that the divisor $x-z$ has finite order in the Picard group. This means that for some natural number $m$ and some function $f: V(G) \to \RR$ with $df \in \im(d)_\ZZ$, we have
$m(x-z) = d^*df$. We may further suppose that $f(z) = 0$, since replacing $f$ by $f - f(z)$ does not affect $d^*df$. If $\Delta$ is the Laplacian operator on the associated metric graph $\Gamma$ 
%(see \S\ref{Sec: Tropical Jacobian}), 
(i.e., the realization of $G$), and if we extend $f$ by linearity to $\Gamma$, this means that $\Delta_t(f(t)/m) = \delta_x(t) - \delta_z(t)$, where $\delta_p$ is the point measure at $p$. The fundamental potential kernel $j_{z}(x,t)$ satisfies this same differential equation and also vanishes at $z$, so that $f(t)/m = j_z(x,t)$. Similarly, since $y-z$ is torsion in $\Pic(G)$, there exists a positive integer $n$ and a function $g: V(G) \to \RR$ with $g(z) = 0$ and $dg \in \im(d)_\ZZ$ such that $g(t)/n = j_z(y,t)$. But now we can use the symmetry of the $j$-function to see that
	\[
		\frac{\ell(e) \cdot \text{integer}}{n} = \frac{g(x)}{n} = j_z(y,x) = j_z(x,y) = \frac{f(y)}{m}
			= \frac{\ell(e') \cdot \text{integer}}{m}.
	\]
The appearance of the unspecified integers in the numerators is due to the fact that $f$ and $g$ have integer slopes. By 
Lemma~\ref{Lem: 2-connected}, $j_z(x,y) \not= 0$, from which we conclude that $\ell(e) / \ell(e')$ is rational.

For the converse direction, Corollary~\ref{Cor: Decompose Product} 
reduces us to showing that if $G$ is 2-connected and the edges of $G$ are commensurable, then $J(G)$ is finite. As we observed above, there exists a real number $t$ and a natural number $k_e$ for each edge $e$ such that $\ell(e) = tk_e $ for all edges $e \in E(G)$. Let $G'$ be the graph obtained from $G$ by subdividing the edge $e$ into $k_e$ edges of length $t$. Then $G'$ is a refinement of $G$ with equal edge lengths. Theorem~\ref{Thm: Edge Scaling} shows that we do not change the structure of the Jacobian by assuming that $t=1$.
%, and moreover, we do not change the number of unweighted spanning trees of $G'$. 
Now $J(G) \hookrightarrow J(G')$ by Theorem~\ref{Thm: compatibility}, and $\#J(G') = \#\Pic(G')$ is equal to the number of spanning trees of $G'$, since $G'$ has all edge lengths equal to 
$1$ \cite[Prop.~1(iii)]{Bacher_et_al_1997}.  Thus the order of $J(G)$ divides the number of spanning trees of $G'$, and in particular it has finite cardinality.
% $\#J(G)$ is finite. 
\end{proof}

From the proof of Theorem~\ref{Thm: Commensurate}, we obtain

\begin{cor}
Let $G$ be a weighted graph for which $J(G)$ is finite.
Then there exists a refinement $G'$ of $G$ such that all edge lengths in each maximal 2-connected subgraph of $G'$ have equal length. Moreover, the order of $J(G)$ divides the number of (unweighted) spanning trees of $G'$. 
\end{cor}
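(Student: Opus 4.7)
The plan is to extract both assertions directly from the argument used in the proof of Theorem~\ref{Thm: Commensurate}, with some additional bookkeeping across the block decomposition. Since $J(G)$ is finite, Theorem~\ref{Thm: Commensurate} implies that for each maximal 2-connected subgraph $G_i$ of $G$ the edge lengths are commensurable. Thus for each $i$ I may choose a real number $t_i > 0$ and positive integers $k_{i,e}$ indexed by $e \in E(G_i)$ with $\ell(e) = t_i k_{i,e}$.

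Next, I would construct $G'$ by leaving every bridge of $G$ untouched and subdividing each edge $e$ of each $G_i$ into $k_{i,e}$ successive pieces of length $t_i$, with the induced orientation. By construction, the maximal 2-connected subgraphs of $G'$ are the refinements $G_i'$ of the corresponding $G_i$, and each $G_i'$ has constant edge length $t_i$. This gives the first claim.

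For the divisibility statement, Theorem~\ref{Thm: compatibility} produces an injective homomorphism $J(G) \hookrightarrow J(G')$, while Corollary~\ref{Cor: Decompose Product} canonically factors each side as a product indexed by maximal 2-connected subgraphs. Hence $\#J(G) = \prod_i \#J(G_i)$ divides $\prod_i \#J(G_i')$. For each $i$, Theorem~\ref{Thm: Edge Scaling} canonically identifies $J(G_i')$ with the Jacobian of the underlying unweighted graph (all edge lengths rescaled to $1$), whose order equals the number of spanning trees of $G_i'$ by \cite[Prop.~1(iii)]{Bacher_et_al_1997}. To finish, I would invoke the standard block-decomposition identity $\#\mathrm{ST}(G') = \prod_i \#\mathrm{ST}(G_i')$, valid because bridges must lie in every spanning tree and the blocks meet only at cut vertices so each block is spanned independently. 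Chaining these gives $\#J(G) \mid \#\mathrm{ST}(G')$.

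The only mild obstacle is keeping track of the (generally distinct) rescaling factors $t_i$ across different 2-connected components and making sure that the product decompositions of $J(G)$ and of $\#\mathrm{ST}(G')$ line up component by component; everything else is a direct assembly of Theorems~\ref{Thm: Edge Scaling}, \ref{Thm: compatibility}, \ref{Thm: Commensurate} and Corollary~\ref{Cor: Decompose Product}.
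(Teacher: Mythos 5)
Your proposal is correct and follows essentially the same route as the paper, which derives the corollary directly from the proof of Theorem~\ref{Thm: Commensurate}: subdivide each maximal 2-connected subgraph into edges of a common length, use Corollary~\ref{Cor: Decompose Product} together with the injection $J(G)\hookrightarrow J(G')$ from Theorem~\ref{Thm: compatibility}, invoke Theorem~\ref{Thm: Edge Scaling} and \cite[Prop.~1(iii)]{Bacher_et_al_1997} blockwise, and finish with the block-decomposition identity for spanning trees. Your explicit bookkeeping of the distinct scaling factors $t_i$ is exactly the point the paper handles by reducing to the 2-connected case, so nothing is missing.
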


\begin{comment}
\begin{proof}
Finally, we note that (2) has just been proved when $G$ is 2-connected. More generally, we can perform the subdivision procedure as described in the previous paragraph on each maximal 2-connected subgraph of $G$. This yields a refinement $G'$ with the property that all edges in a maximal 2-connected subgraph of $G'$ have equal length. The number of unweighted spanning trees in $G'$ is equal to the product of the numbers of spanning trees in each of its maximal 2-connected subgraphs. (The intersection of a spanning tree with a maximal 2-connected subgraph is a spanning tree of the subgraph, and any spanning tree of the subgraph arises in this way.) As the order of $J(G)$ is equal to the product of the orders of the Jacobians of each of its maximal 2-connected subgraphs, we are finished by our work on the 2-connected case. 
\end{proof}
\end{comment}

\begin{proof}[Proof of Theorem~\ref{Thm: Maximal Rank}]
	Corollary~\ref{Cor: Decompose Product} allows us to immediately reduce to the case that $G$ is 2-connected, in which we must show that $J(G)$ is free Abelian of rank $\#V(G) - 1$. 
	
	We again work with the Picard group. Observe that if we fix a vertex $y \in V(G)$, then any divisor of degree~0 can be written uniquely as
	\[
		D = \sum_{x \in V(G)} D(x).x = \sum_{x \in V(G)} D(x). \left(x - y\right).
	\]
Thus $\Div^0(G)$ is freely generated by the set of divisors $\{x - y : x \neq y \}$, which has cardinality $\#V(G) - 1$. 

	It suffices to prove that $\Prin(G) = 0$. This is trivial if $G$ consists of a single vertex. Otherwise, suppose $f:V(G) \to \RR$ is a function with integer slopes. Let $\gamma$ be a nontrivial closed path in $G$ with no backtracking and such that no edge is traversed twice. Let $x_0, x_1, \ldots, x_n = x_0$ be the vertices of the path $\gamma$, write $\ell_{i,j}$ for the length of the edge of $\gamma$ passing from $x_i$ to $x_j$, and write $m_{i,j} \in \ZZ$ for the slope of $f$ along this same edge, oriented from $x_i$ to $x_j$. Then 
	\begin{eqnarray*}
		f(x_1) &=& f(x_0) + \ell_{0,1} \cdot m_{0,1} \\
		f(x_2) &=& f(x_1) + \ell_{1,2} \cdot m_{1,2} \\
			& \vdots & \\
		f(x_n) & = & f(x_{n-1}) + \ell_{n-1, n} \cdot m_{n-1, n}.
	\end{eqnarray*}
Substituting each equation into the next gives the linear dependence relation 
	\[
		m_{0,1} \cdot \ell_{0,1} + m_{1,2} \cdot \ell_{1,2} + \cdots + m_{n-1, n} \cdot \ell_{n-1,n} = 0.
	\]
The lengths $\ell_{i,j}$ are $\QQ$-linearly independent, so all of the slopes $m_{i,j}$ must vanish. As $G$ has no bridges, we can find such a path $\gamma$ passing through any edge of $G$. That is, $f$ is constant on $G$, and $d^*d(f) = 0$. 
\end{proof}

% ------------------------------------------------------------
% ------------------------------------------------------------

\section{The Tropical Jacobian}
\label{Sec: Tropical Jacobian} 

\begin{comment}
Let $\Gamma$ be a \textbf{(compact) tropical curve}, by which we will mean a compact connected metric space such that each $p \in \Gamma$ has a neighborhood $U_p$ isometric to a star-shaped set of valence $n_p \geq 1$, endowed with the path metric. A \textbf{star-shaped set of valence $n_p$} is a set of the form
	\be \label{Eq: Star-shaped set}
		S(n_p, r_p) =\{z \in \CC: z=te^{k\cdot 2\pi i / n_p} 
			\text{ for some $0 \leq t < r_p$ and some $k \in \ZZ$}\}.
	\ee
We will also assume that $\Gamma$ is equipped with an oriented simplicial decomposition in order to facilitate the computation of homology. 
\end{comment}

Let $\Gamma$ be a \textbf{metric graph}, by which we will mean a compact connected metric space such that each $p \in \Gamma$ has a neighborhood $U_p$ isometric to a star-shaped set of valence $n_p \geq 1$, endowed with the path metric. A \textbf{star-shaped set of valence $n_p$} is a set of the form
	\be \label{Eq: Star-shaped set}
		S(n_p, r_p) =\{z \in \CC: z=te^{k\cdot 2\pi i / n_p} 
			\text{ for some $0 \leq t < r_p$ and some $k \in \ZZ$}\}.
	\ee
We will also assume that $\Gamma$ is equipped with an oriented simplicial decomposition in order to facilitate the computation of homology. 

There is a one-to-one correspondence between metric graphs on one hand, and (abstract) compact tropical curves on the other. (See \cite[\S3.3]{Mikhalkin_Zharkov_Tropical_Jacobians_2008}.) We may therefore speak of these objects interchangeably, and we have chosen to work in the language of tropical curves in this paper. Throughout, all tropical curves will be implicitly assumed compact. 
	 
We can think of a tropical curve as a geometric realization of a weighted graph. A weighted graph $G$ yields a tropical curve $\Gamma$ by associating to an oriented edge $e \in E(G)$ the line segment $[0, \ell(e)]$, and then by gluing these line segments according to the edge assignment map $\iota: E(G) \to V(G) \times V(G)$.  If $G'$ is a refinement of~$G$, then it is clear that $G$ and $G'$ give rise to tropical curves $\Gamma$ and $\Gamma'$ which are canonically isometric. The weighted graph $G$ (and also $G'$) is called a \textbf{model} of~$\Gamma$. Conversely, given a tropical curve $\Gamma$, we can associate a model $G$ by choosing a nonempty vertex set $V \subset \Gamma$ containing all points of $\Gamma$ of valence different from~$2$. We note that the preferred simplicial decomposition of $\Gamma$ corresponds to fixing a preferred model $G_0$ for $\Gamma$. Every other model will have an orientation compatible with $G_0$. Note that the collection $R(G_0)$ of weighted graphs that admit a common refinement with $G_0$ is none other than the collection of models of $\Gamma$.

A \textbf{segment} of the tropical curve $\Gamma$ relative to a model $G$ is the geometric realization in $\Gamma$ of an edge of $G$. If we speak of a segment of $\Gamma$ without extra qualifier, we implicitly mean a segment relative to the preferred model $G_0$. We will often abuse notation by writing $e$ for both an edge of a model $G$ as well as for the corresponding segment of $\Gamma$.

\begin{comment}
Following \cite{Mikhalkin_Zharkov_Tropical_Jacobians_2008}, let $\RR$ be the constant sheaf on $\Gamma$ with values in $\RR$ and let $\Aff_\RR$ be the sheaf of continuous piecewise affine harmonic functions on $\Gamma$. (Harmonic means the oriented sum of the slopes at a given point is zero.) The \textbf{real cotangent local system} $\mathcal{T}^*$ on $\Gamma$ is defined as the quotient sheaf $\Aff_\RR / \RR$:
	\[
		0 \longrightarrow \RR \longrightarrow \Aff_\RR \longrightarrow \mathcal{T}^* \longrightarrow 0.
	\]
A \textbf{1-form} on $\Gamma$ is a global section of $\mathcal{T}^*$. Write $\Omega(\Gamma)$ for the space of $1$-forms on $\Gamma$. A 1-form can be described by an open cover $\{U_\alpha\}$ by star-shaped sets and a harmonic function $f_\alpha$ on each $U_\alpha$ such that on the overlaps, the slopes of the $f_\alpha$ agree. By simply keeping track of the slope data, it is equivalent to give a harmonic 1-form on $G_0$. That is, $\Omega(\Gamma) \cong \Omega(G_0)$. 
\end{comment}

	The space of \textbf{harmonic 1-forms} on  $\Gamma$ is defined as the direct limit
		\[	
			\Omega(\Gamma) = 
	   \lim_{\substack{\longrightarrow \\ G \in R(G_0)}} \Omega(G),
		\]
	where we recall that $\Omega(G)$ is canonically isomorphic to $\Omega(G')$ if $G'$ refines $G$. In other words, a harmonic $1$-form on $\Gamma$ can be described by giving a model $G$ for $\Gamma$ and a harmonic 1-form on $G$.\footnote{Our notion of harmonic 1-forms is equivalent to the notion of 1-forms used in \cite{Mikhalkin_Zharkov_Tropical_Jacobians_2008}. There a 1-form on a tropical curve is defined by giving an open cover $\{U_\alpha\}$ of $\Gamma$ by star-shaped sets and a continuous piecewise affine function $f_\alpha: U_\alpha \to \RR$ on each chart such that the sum of the slopes of $f_\alpha$ at each point of the chart is zero (taking orientation into account), and such that $f_\alpha$ differs from $f_\beta$ by a constant function on the overlap $U_\alpha \cap U_\beta$. Given a model $G$ for $\Gamma$, this definition implies that we can associate a well-defined real number $\omega_e$ to each edge $e$ of $G$ --- namely the slope of $f_\alpha$ --- in such a way that the sum of these slopes is zero at each vertex. Evidently this gives a harmonic 1-form $\omega = \sum \omega_e. de$ on $G$.}  Evidently $\Omega(\Gamma)$ is a real vector space, and $\Omega(\Gamma) \cong \Omega(G_0)$.

\begin{define}
	Let $\Gamma$ be a tropical curve with a given model $G_0$. The \textbf{Jacobian} of $\Gamma$ is given by
	\[
		J(\Gamma) 
		% = J(\Gamma, G_0) 
		= \Omega(\Gamma)^* / H_1(\Gamma, \ZZ),
	\]
	where the homology group $H_1(\Gamma, \ZZ)$ is computed using the simplicial decomposition of 
	$\Gamma$ prescribed by the model $G_0$. The group $J(\Gamma)$ is given the quotient topology, where
	$\Omega(\Gamma)^*$ is equipped with the real vector space topology.
	
\end{define}

Choosing a basis for $H_1(\Gamma, \ZZ)$ shows that $J(\Gamma)$ is (non-canonically) isomorphic to the real torus $(\RR / \ZZ)^g$, where $g = \operatorname{rk} H_1(\Gamma,\ZZ) = \#E(G_0) - \#V(G_0) + 1$. 

\begin{remark} 
The choice of model $G_0$ is necessary for the definition of $J(\Gamma)$, but any two models yield canonically isomorphic Jacobians. 
%Indeed, if $G_0$ and $G_0'$ are two models of $\Gamma$, then one can find a common refinement of these two models --- up to orientation --- and give an isomorphism between the corresponding groups of harmonic 1-forms and integral homology by matching the orientations. Consequently, we will suppress the model $G_0$ in the notation of $J(\Gamma)$, although we will always assume that such a model is given. 
\end{remark}

By the discussion immediately preceding the definition of $J(\Gamma)$ and by Theorem~\ref{Thm: Limit Jacobian}, we obtain

\begin{cor} \label{Cor: Can Iso 2}
	Let $\Gamma$ be a tropical curve equipped with a fixed model $G_0$. Then there is a canonical isomorphism
	\[
		\lim_{\substack{\longrightarrow \\ G \in R(G_0)}} J(G) \simarrow J(\Gamma)  ,
	\]
where $R(G_0)$ is the collection of weighted graphs that admit a common refinement with $G_0$. Moreover, for each $G \in R(G_0)$ the above isomorphism induces a canonical inclusion $J(G) \hookrightarrow J(\Gamma)$.
\end{cor}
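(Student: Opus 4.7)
The plan is to combine Theorem~\ref{Thm: Limit Jacobian} with the definitional identifications $\Omega(\Gamma) \cong \Omega(G_0)$ and $H_1(\Gamma, \ZZ) \cong H_1(G_0, \ZZ)$, both of which are essentially built into the definitions already set up in this section. So this corollary should reduce to a bookkeeping exercise, with no serious obstacle.

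First I would spell out the identification of $J(\Gamma)$ with $\Omega(G_0)^*/H_1(G_0,\ZZ)$. By the definition of harmonic $1$-forms on $\Gamma$ as a direct limit, together with Lemma~\ref{Lem: Refinement}(2) which shows that the push-forward maps $\Omega(G) \to \Omega(G')$ under refinement are isomorphisms, one has a canonical isomorphism $\Omega(\Gamma) \simarrow \Omega(G_0)$; dualizing gives $\Omega(\Gamma)^* \simarrow \Omega(G_0)^*$. On the homology side, $H_1(\Gamma, \ZZ)$ is computed using the simplicial decomposition of $\Gamma$ coming from $G_0$, and cellular homology of this decomposition is by definition the kernel of $d^*$ on $C_1(G_0,\ZZ)$, i.e., $H_1(G_0,\ZZ)$. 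Putting these together yields a canonical isomorphism
\[
J(\Gamma) \;=\; \Omega(\Gamma)^* / H_1(\Gamma, \ZZ) \;\simarrow\; \Omega(G_0)^* / H_1(G_0, \ZZ).
\]
Now Theorem~\ref{Thm: Limit Jacobian} provides a canonical isomorphism $\lim_{\longrightarrow} J(G) \simarrow \Omega(G_0)^*/H_1(G_0,\ZZ)$, and composing the two gives the first claim.

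For the second assertion, I would invoke Theorem~\ref{Thm: compatibility}, which says that for $G \leq G'$ in $R(G_0)$ the induced refinement homomorphism $\rho : J(G) \to J(G')$ is injective and these maps are compatible under composition. The directed system $\{J(G)\}_{G \in R(G_0)}$ thus consists entirely of injections, so the canonical map from each term $J(G)$ to the direct limit is itself injective. Composing with the isomorphism constructed above, each $J(G)$ embeds canonically into $J(\Gamma)$.

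The only small issue to check is canonicity: the identifications $\Omega(\Gamma) \cong \Omega(G_0)$ and $H_1(\Gamma,\ZZ) \cong H_1(G_0,\ZZ)$ could in principle depend on $G_0$, but since every model has orientation compatible with $G_0$ and the refinement maps in Lemma~\ref{Lem: Refinement} are canonical, the resulting isomorphism of $J(\Gamma)$ with the direct limit does not depend on the choice of $G_0$ within the class of models compatible with the prescribed simplicial decomposition; this is essentially a cofinality argument inside the directed set $R(G_0)$.
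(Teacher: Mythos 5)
Your proposal is correct and follows essentially the same route as the paper: the corollary is obtained by unwinding the definitional identifications $\Omega(\Gamma) \cong \Omega(G_0)$ and $H_1(\Gamma,\ZZ) = H_1(G_0,\ZZ)$ and then invoking Theorem~\ref{Thm: Limit Jacobian}, exactly as you do. The only cosmetic difference is in the ``moreover'' clause, where you deduce injectivity of $J(G) \to J(\Gamma)$ from the injective transition maps of Theorem~\ref{Thm: compatibility}, while the paper's proof of Theorem~\ref{Thm: Limit Jacobian} already records the equivalent fact via the inclusion $J(G) = \Omega(G)^{\sharp}/H_1(G,\ZZ) \hookrightarrow \Omega(G)^*/H_1(G,\ZZ)$; both arguments are available in the paper and amount to the same thing.
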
 

	One can also define a notion of \textbf{Picard group} for a tropical curve. We let $\Div^0(\Gamma)$ be the 
	free Abelian group of divisors of degree zero on $\Gamma$, and we let $\Prin(\Gamma)$ be the group of divisors
	that arise as the Laplacian of a continuous piecewise affine function on $\Gamma$ with integer slopes. Then the
	Picard group is the quotient $\Pic(\Gamma) = \Div^0(\Gamma) / \Prin(\Gamma)$. If we choose a model $G$
	for $\Gamma$, then the degree zero divisors supported on the vertices of $G$ generate the subgroup $\Pic(G)$ of $\Pic(\Gamma)$. In this way we get an inclusion $\Pic(G) \hookrightarrow \Pic(\Gamma)$, and we may view the
	 subgroup $\Pic(G)$ as ``$G$-rational points'' of the Picard group. Moreover, the commutative diagram
	in Lemma~\ref{Lem: Comm Diag} is compatible with refinements, so one can check that we obtain an 
	isomorphism 
		\[
			\lim_{\substack{\longrightarrow \\ G \in R(G_0)}} \Pic(G) \simarrow \Pic(\Gamma).
		\]
	In particular, combining these observations with 
	Theorem~\ref{Thm: Jac/ Pic} and Corollary~\ref{Cor: Can Iso 2},
	we obtain a new proof of the following result,
	originally proved in \cite[Thm.~6.3]{Mikhalkin_Zharkov_Tropical_Jacobians_2008} by a different method.
	
\begin{thm}[Tropical Abel-Jacobi theorem]
If $\Gamma$ is a tropical curve, there is a canonical isomorphism between $J(\Gamma)$ and $\Pic(\Gamma)$.
\end{thm}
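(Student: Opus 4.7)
The plan is to derive the theorem as a direct limit of the graph-level Abel--Jacobi isomorphism established in Theorem~\ref{Thm: Jac/ Pic}. The excerpt has already stated all three ingredients needed: (i) for each weighted graph $G$ there is a canonical isomorphism $h_G : J(G) \simarrow \Pic(G)$; (ii) Corollary~\ref{Cor: Can Iso 2} identifies $J(\Gamma)$ with $\lim_{\longrightarrow} J(G)$ over $G \in R(G_0)$; and (iii) the analogous direct limit formula $\lim_{\longrightarrow} \Pic(G) \simarrow \Pic(\Gamma)$ holds for Picard groups. So the strategy is simply to transport the isomorphism $h_G$ through the direct limit.

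The only real content is verifying that the family $\{h_G\}_{G \in R(G_0)}$ is a morphism of directed systems, i.e., that for every refinement $G_1 \leq G_2$ the square
\[
\xymatrix{
J(G_1) \ar[r]^{\rho_{21}} \ar[d]_{h_{G_1}} & J(G_2) \ar[d]^{h_{G_2}} \\
\Pic(G_1) \ar[r] & \Pic(G_2)
}
\]
commutes, where the bottom arrow is the natural map induced by inclusion of vertex sets (supporting divisors of $G_1$ on $V(G_1) \subseteq V(G_2)$). I would check this by unwinding the Snake-Lemma construction of $h_G$ from the diagram \eqref{Eq: Comm Diag}: given $[\alpha] \in J(G_1)$ with $\alpha \in C_1(G_1,\ZZ)$, the class $h_{G_1}([\alpha])$ is $[d_1^* \alpha] \in \Pic(G_1)$, while $h_{G_2}(\rho_{21}[\alpha]) = [d_2^* r(\alpha)]$. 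The computation \eqref{Eq: Refined cycles} in the proof of Lemma~\ref{Lem: Refinement} shows that $d_2^* r(\alpha)$ agrees with $d_1^*\alpha$ on the old vertices of $G_1$ and vanishes on the inserted ones, which is exactly the image of $[d_1^*\alpha]$ under the inclusion $\Pic(G_1) \hookrightarrow \Pic(G_2)$. By induction on the number of subdivisions, this handles arbitrary refinements.

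Once that compatibility is in hand, the family $\{h_G\}$ induces an isomorphism
\[
\lim_{\substack{\longrightarrow \\ G \in R(G_0)}} J(G) \simarrow \lim_{\substack{\longrightarrow \\ G \in R(G_0)}} \Pic(G),
\]
and composing with the identifications in (ii) and (iii) above yields the desired canonical isomorphism $J(\Gamma) \simarrow \Pic(\Gamma)$. The only step that requires any genuine computation is the refinement-compatibility check, and even this is almost forced by the snake-lemma recipe together with the explicit description of $d^*$ under subdivision already worked out in Lemma~\ref{Lem: Refinement}; everything else is formal manipulation of direct limits.
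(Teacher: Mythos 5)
Your proposal is correct and follows the same route as the paper: the paper likewise deduces the theorem by combining the graph-level isomorphism $J(G) \cong \Pic(G)$ of Theorem~\ref{Thm: Jac/ Pic} with the direct limit identifications $J(\Gamma) \cong \varinjlim J(G)$ (Corollary~\ref{Cor: Can Iso 2}) and $\Pic(\Gamma) \cong \varinjlim \Pic(G)$, citing the refinement-compatibility of the diagram in Lemma~\ref{Lem: Comm Diag}. Your explicit check of that compatibility via the Snake Lemma description of $h_G$ and the computation \eqref{Eq: Refined cycles} simply fills in a verification the paper leaves to the reader.
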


\begin{comment}
The $d^*$ operator agrees with the boundary map on simplicial homology, so $J(\Gamma, G_0) = J(G_0)_\RR$. By Corollary~\ref{Cor: compatibility}, $J(\Gamma, G_0)$ is canonically isomorphic to $J(G)_\RR$ for any refinement $G \geq G_0$. We will identify all of these groups without comment from now on. Furthermore, the inner product on $C_1(G_0,\RR)$ defined above induces a quotient metric on $J(\Gamma, G_0)$ that is compatible with all of these isomorphisms;i.e., the inner product coming from any refinement $G$ induces the same quotient metric by Theorem~\ref{Thm: refinement}(iii). This metric endows $J(\Gamma, G_0)$ with a topology for which it is a real compact torus of dimension $g$.
\end{comment}

	If $V$ is a real vector space of dimension $n$ and $\Lambda$ is a lattice in $V$, then $V/\Lambda$ is non-canonically isomorphic
	to a real torus $\left(\RR / \ZZ\right)^n$. 
	A \textbf{tropical structure} on $V / \Lambda$is defined to be a second lattice $\Lambda' \subset V$. 	
	% Observe that $V/\Lambda$ inherits a canonical topology from the vector space structure on $V$; in fact, it is a smooth manifold. 
	The tangent space at a point of $V/\Lambda$ can be identified with the vector space $V$; now $\Lambda'$ can 
	be viewed as a lattice of sections in the tangent space $T(V / \Lambda)$. An element $\lambda \in \Lambda'$ is 
	called an \textbf{integral tangent vector}. Such a $\lambda$ is called \textbf{primitive} if it cannot
	be written as $\lambda = t\lambda'$ for some $\lambda' \in \Lambda'$ and some integer $t > 1$. 
	% Observe that the tropical structure on $V/\Lambda$ determines the (primitive) integral tangent vectors and vice versa.
	
	The lattice of \textbf{harmonic 1-forms with integer
	 coefficients} 
	on $\Gamma$, denoted $\Omega_\ZZ(\Gamma)$, is defined to be an element 
	$\sum \omega_e.de \in \Omega(\Gamma)$, where $\omega_e \in \ZZ$
	for all edges $e$ (of the implicitly chosen model). The \textbf{tropical structure} on the Jacobian 
	$J(\Gamma)$ is defined to be the dual lattice 
		\[
			\Omega_\ZZ(\Gamma)^* =  \left\{\int_\alpha \in \Omega(\Gamma)^* : \int_\alpha \omega \in \ZZ
			\text{ for all $\omega \in \Omega_\ZZ(\Gamma)$} \right\}.
		\]
		
	Now let $\Gamma$ be a tropical curve, and let $e$ be a segment of $\Gamma$. The interior of $e$ is 
	(canonically) isometric to the open interval $(0, \ell(e))$, and as such it is a smooth manifold of dimension~1. 
	The segment $e$ itself is a manifold with boundary, and at the
	two boundary points the tangent space of $e$ is an infinite ray.
	The metric on $\Gamma$ induces a canonical metric on the tangent space at each point of $e$. 
	We define a \textbf{tangent vector} on $\Gamma$ to be an element of $T_p(e)$ for some segment $e$ of $\Gamma$ and some
point $p \in e$.  
By abuse of notation, we will write $T_p(\Gamma)$ for the set of all tangent vectors at $p$, which may be viewed as the union of the infinite rays determined by the distinct segments emanating from $p$. We define an 
	\textbf{integral tangent vector} to be a tangent vector with integer length; an integral tangent vector is called 
	\textbf{primitive} if it has unit length.  
% More concretely, the tangent space to the interior of $e$ can be 
%	canonically identified with $(0, \ell(e)) \times \RR$, and the tangent vector $\vec{v} = (t, u)$ is defined to have 
%	length $\|\vec{v}\| = |u|$.
	
	Let us return to our general tropical torus $(V / \Lambda, \Lambda')$, and let $\Gamma$ be a tropical curve.
	A continuous map $f: \Gamma \to V/\Lambda$ is said to be \textbf{tropical} if the following three conditions hold: 
	\begin{itemize}
		\item[(i)] $f$ is piecewise affine; 
		\item[(ii)] At each point $p \in \Gamma$, the differential map 
			$Df_p:T_p(\Gamma) \to T_{f(p)}(V/\Lambda)$ 
			carries integral tangent vectors to integral tangent vectors; and 
		\item[(iii)] At each point $p \in \Gamma$, $f$ is \textbf{harmonic}. That is, 
			we have $\sum_{\lambda} Df_p(\lambda) = 0$, where  
			the sum is over all \textit{primitive} integral tangent vectors at $p$. 
	\end{itemize}
Conditions (i) and (ii) assert that  $f$ is locally given by $f(t) = v + t\lambda'$, where $t$ is a unit-speed coordinate on some segment of $\Gamma$, $\lambda' \in \Lambda'$, and $v \in V$. Condition (iii) is also referred to in the literature as a ``balancing condition'' at $p$.

% ----------------------------------------------------------------
% ----------------------------------------------------------------

\section{The Abel-Jacobi Map}

For the duration of this section, let $\Gamma$ be a tropical curve equipped with a preferred model $G_0$. Fix a point $p$ of $\Gamma$ (which need not be a vertex of $G_0$). We define the \textbf{Abel-Jacobi} map $\Phi_p: \Gamma \to J(\Gamma)$ as follows. For a point $q \in \Gamma$, choose a model $G \in R(G_0)$ containing $p$ and $q$ as vertices. Let $P$ be a path from $p$ to $q$ in $G$, and let $\alpha_P$ be the associated $1$-chain (as defined just before the proof of Theorem~\ref{Thm: Limit Jacobian}). Then $\int_{\alpha_P}$ defines an element of $J(G)$, and we set $\Phi_p(q)$ to be the image of $\int_{\alpha_P}$ in $J(\Gamma)$ under the canonical inclusion $J(G) \hookrightarrow J(\Gamma)$. (See Corollary~\ref{Cor: Can Iso 2}.) 

The map $\Phi_p$ depends on the basepoint $p$, but not on the choice of model $G$. Indeed, let $G_1$ and $G_2$ be models containing the vertices $p$ and $q$ and let $G_3$ be a common refinement. Let $P_1$ and $P_2$ be paths from $p$ to $q$ on $G_1$ and $G_2$, respectively. Let $r_{ij}$ be the refinement homomorphism from $G_j$ to $G_i$. Then $r_{31}(\alpha_{P_1})-r_{32}(\alpha_{P_2})$ is an integral $1$-cycle on $G_3$; i.e., $\int_{r_{31}(\alpha_{P_1})} \equiv \int_{r_{32}(\alpha_{P_2})}$ modulo $H_1(G_3, \ZZ)$.  As the integration pairing is compatible with the refinement homomorphisms (Lemma~\ref{Lem: Refinement}), it follows that $\Phi_p: \Gamma \to J(\Gamma)$ is well-defined. 

The next result is proved in \cite{Mikhalkin_Zharkov_Tropical_Jacobians_2008}, but we give an alternative proof using our discrete techniques.  We will push these methods just a little further in the next section to prove that the Abel-Jacobi map is a tropical isometry away from the segments that are collapsed.

\begin{thm} \label{Thm: Abel-Jacobi Properties}
	Let $p \in \Gamma$ be a basepoint. Then the Abel-Jacobi map 
	$\Phi_p: \Gamma \to J(\Gamma)$ satisfies the following properties:
	\begin{enumerate}
		\item $\Phi_p$ is continuous.
		\item The restriction of $\Phi_p$ to any simply-connected subgraph $\Gamma_1$ 
			containing $p$ can be factored 
			through the projection $\Omega(\Gamma)^* \to J(\Gamma)$:
				\benn
					\xymatrix{
						& \Omega(\Gamma)^* \ar[d] \\
						\Gamma_1 \ar@{-->}[ur]^{\widetilde{\Phi}_p} 
						\ar[r]_{\Phi_p \hspace{.2in} } & J(\Gamma).
					}
				\eenn
			\textup(The lift is unique if we require that $\widetilde{\Phi}_p(p) = 0$.\textup)
		\item If $e$ is a segment of $\Gamma$ such that the complement of the interior of $e$ is 
			disconnected, then $\Phi_p$ collapses $e$ to a point. 
		\item If $e$ is a segment of $\Gamma$ such that the complement of the interior of $e$ is 
			connected, then for any simply-connected subgraph $\Gamma_1$ containing $e$ and any lift 
			$\widetilde{\Phi}_p$ as in \textup{(2)}, the segment $e$ maps to a straight segment in 
			$\Omega(\Gamma)^*$. 
			
		\item Let $e$ be any segment of $\Gamma$. 
		% with respect to a model $G$. 
		Then
			\[
				\frac{1}{\ell(e)}\int_e \in \Omega_\ZZ(\Gamma)^*.
			\]
			If $\int_e \not= 0$ in $\Omega(\Gamma)^*$, then $\frac{1}{\ell(e)}\int_e$ is a primitive
			element of the lattice $\Omega_\ZZ(\Gamma)^*$. 
			% In particular, the Abel-Jacobi map is tropical. 
		
		\item The Abel-Jacobi map is tropical.
	\end{enumerate}
\end{thm}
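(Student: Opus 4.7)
The plan is to prove the six assertions in order, leaning heavily on the direct-limit description of $J(\Gamma)$ (Corollary~\ref{Cor: Can Iso 2}) and on the integration pairing developed in Section~2. For (1), continuity follows because a point $q'$ near $q$ can always be incorporated as a vertex into some refined model together with $p$ and $q$, and the path from $p$ to $q'$ differs from the path to $q$ by (an oriented piece of) a single edge; the resulting change of $\Phi_p(q')-\Phi_p(q)$ is given by an integral of bounded harmonic forms over that short segment and hence tends to $0$. For (2), on a simply-connected subgraph $\Gamma_1\subseteq\Gamma$ containing $p$, any two paths from $p$ to a point $q\in\Gamma_1$ differ by a $1$-boundary rather than a cycle, so the integration functional $\int_{\alpha_P}\in\Omega(\Gamma)^*$ is independent of the path (before modding out by $H_1(\Gamma,\ZZ)$); this defines $\widetilde{\Phi}_p$, and uniqueness with the normalization $\widetilde{\Phi}_p(p)=0$ is immediate.

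For (3), the key observation is that any harmonic $1$-form $\omega$ vanishes on a bridge $e$. Indeed, pick a model in which $e$ is an edge, removing whose interior separates $\Gamma$ into two pieces $\Gamma',\Gamma''$; summing the balancing relation \eqref{Eq: Omega} over the vertices of $\Gamma'$ makes every interior edge contribution cancel, leaving $\pm\omega_e=0$. Therefore $\int_e\omega=\omega_e\ell(e)=0$ for every $\omega\in\Omega(\Gamma)$, so $\int_e=0$ in $\Omega(\Gamma)^*$, and $\Phi_p$ collapses $e$ to a point. For (4), choose a spanning tree of a model of $\Gamma$ containing $e$ and embed this tree in a simply-connected subgraph $\Gamma_1\ni e$; the lift $\widetilde{\Phi}_p$ from (2) is then, along the segment $e$ parameterized by arclength $t\in[0,\ell(e)]$, given by the affine formula $\widetilde{\Phi}_p(t)=\widetilde{\Phi}_p(e^-)+t\cdot\frac{1}{\ell(e)}\int_e$ in $\Omega(\Gamma)^*$, which is a straight segment.

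For (5), given any integral harmonic $1$-form $\omega=\sum\omega_{e'}.de'\in\Omega_\ZZ(\Gamma)$, direct computation gives $\int_e\omega=\omega_e\ell(e)$ with $\omega_e\in\ZZ$, so $\frac{1}{\ell(e)}\int_e$ pairs integrally with every element of $\Omega_\ZZ(\Gamma)$, placing it in $\Omega_\ZZ(\Gamma)^*$. For primitivity when $\int_e\neq 0$, the vanishing result from (3) shows $e$ cannot be a bridge, so there exists a cycle $\gamma$ through $e$; the associated integral $1$-cycle, interpreted via Lemma~\ref{Lem: Perfect Pairing} as an integral harmonic $1$-form $\omega_\gamma$, has $(\omega_\gamma)_e=\pm 1$, so $\frac{1}{\ell(e)}\int_e$ takes the value $\pm 1$ on $\omega_\gamma$ and therefore cannot be a nontrivial integer multiple of another lattice element.

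Finally, (6) assembles the preceding pieces. Piecewise affineness and the computation of the derivative along $e$ come from (4): the differential $D(\Phi_p)_q$ sends the primitive integral tangent vector at $q\in e^\circ$ pointing in the direction of $e^+$ to $\frac{1}{\ell(e)}\int_e$, which is an integral tangent vector on the torus $J(\Gamma)$ by (5), verifying the integrality condition. For the balancing condition at a point $q$ (working in a model in which $q$ is a vertex), the sum $\sum_{e\ni q} \pm\frac{1}{\ell(e)}\int_e$ over edges at $q$ (with signs from orientation) paired against any $\omega\in\Omega(\Gamma)$ returns $\sum_{e:e^+=q}\omega_e-\sum_{e:e^-=q}\omega_e=0$ by harmonicity, so the sum of derivatives of primitive outgoing integer tangent vectors vanishes in $\Omega(\Gamma)^*$, and hence in $T_{\Phi_p(q)}J(\Gamma)$. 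The main obstacle among these steps is the vanishing of harmonic $1$-forms on bridges needed in (3), which is also what makes the primitivity argument in (5) legitimate; everything else is a direct unpacking of the integration pairing together with the direct-limit description of $J(\Gamma)$.
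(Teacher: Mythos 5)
Your proposal is correct and follows essentially the same route as the paper: construct the lift $\widetilde{\Phi}_p$ on a simply-connected subgraph via path integration, derive the local affine formula $\widetilde{\Phi}_p(q') = \widetilde{\Phi}_p(q) + \frac{\ell(e')}{\ell(e)}\int_e$ along a segment, and read off (1), (3), (4), (6) from it, with (5) handled by pairing $\frac{1}{\ell(e)}\int_e$ against integral harmonic forms and against the form attached to a cycle through $e$. The one genuine (if small) difference is how you establish the key vanishing statement: you show directly that every harmonic $1$-form has $\omega_e = 0$ on a bridge $e$ by summing the balancing condition \eqref{Eq: Omega} over the vertices of one component of the complement, so that all interior contributions cancel and only $\pm\omega_e$ survives; the paper instead invokes Lemma~\ref{Lem: Connected vs. cycles}, whose proof observes that $e \in \im(d)$ exactly when $e$ is a bridge and then applies the perfect-pairing statement of Lemma~\ref{Lem: Perfect Pairing}. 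Your cut-summation argument is more elementary and self-contained, at the cost of only giving the implication you need in (3); the paper's lemma gives the full equivalence ``$\int_e = 0$ iff $e$ is a bridge,'' and the converse direction is what guarantees in (4) that the image of a non-bridge segment is a genuinely non-degenerate straight segment. As written, your step (4) does not note that $\int_e \neq 0$, but this is immediately repaired by the cycle construction you already use in (5) (a non-bridge edge lies on a simple cycle, whose associated harmonic form pairs with $\int_e$ to give $\pm\ell(e) \neq 0$), so this is an omission of a sentence rather than of an idea.
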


The simple way to say (3) and (4) is that $\Phi_p$ collapses any segment that isn't part of a cycle, and it maps segments of cycles to straight segments. The only place where the image segment can change direction is at a point of $\Gamma$ of valence greater than two. 

\begin{remark}
	If $\Gamma$ is a tropical curve and $G$ is a model for $\Gamma$, then $J(G)$ is the subgroup of 
	$J(\Gamma)$ generated by the images of the vertices of $G$ under any Abel-Jacobi map $\Phi_p: \Gamma \to 
	J(\Gamma)$ with $p \in V(G)$. 
	% This follows from the construction of $\Phi_p$.
\end{remark}

For the proof, we will need the following graph-theoretic characterization of the basic $1$-chains of $G$ that give rise to trivial integration functionals. 

\begin{lem} \label{Lem: Connected vs. cycles}
	Let $G$ be a weighted graph and $e$ an edge of $G$. Then $\int_e = 0$ if and only if the graph $G-e$ obtained 	by deleting the edge $e$ is disconnected. 
\end{lem}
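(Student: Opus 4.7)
The plan is to leverage the Hodge decomposition $C_1(G,\RR) = H_1(G,\RR) \oplus \im(d)$ together with Lemma~\ref{Lem: Perfect Pairing}, which says the kernel on the right of the integration pairing $\Omega(G) \times C_1(G,\RR) \to \RR$ is precisely $\im(d)$. Thus the statement $\int_e = 0$ in $\Omega(G)^*$ is equivalent to $e \in \im(d)$, i.e., there exists a $0$-chain $f \in C_0(G,\RR)$ with $df = e$ (the basic $1$-chain).

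The problem is then reduced to determining when the equation $df = e$ admits a solution. Unpacking the definition of $d$, this equation demands that
\[
    \frac{f(e'^+) - f(e'^-)}{\ell(e')} = \begin{cases} 1 & \text{if } e' = e, \\ 0 & \text{if } e' \neq e. \end{cases}
\]
The second condition says $f$ is constant along every edge of $G$ other than $e$, so $f$ must be constant on each connected component of the graph $G - e$. The first condition then says $f(e^+) - f(e^-) = \ell(e) \neq 0$.

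This immediately gives both directions. If $G - e$ is connected, then $f$ must be constant on all of $V(G)$, forcing $f(e^+) = f(e^-)$, which contradicts the first condition; hence $e \notin \im(d)$ and $\int_e \neq 0$. Conversely, if $G - e$ is disconnected, then $e$ is a bridge and $e^+$, $e^-$ lie in distinct components $U^+$, $U^-$ of $G - e$. Setting $f \equiv \ell(e)$ on $U^+$ and $f \equiv 0$ on $U^-$ yields a well-defined $0$-chain with $df = e$, so $\int_e = 0$.

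The proof is essentially formal once Lemma~\ref{Lem: Perfect Pairing} is invoked; there is no real obstacle, just the bookkeeping of explicitly solving $df = e$ and noticing that solvability corresponds exactly to $e$ being a bridge. The only subtlety worth noting is that the argument uses connectivity of components of $G - e$ to conclude $f$ is locally constant off of $e$, which is why the statement is about connectivity of $G - e$ rather than some weaker combinatorial property.
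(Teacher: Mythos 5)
Your proof is correct and follows essentially the same route as the paper: the paper's proof also reduces the statement, via Lemma~\ref{Lem: Perfect Pairing}, to the observation that $e \in \im(d)$ if and only if $e$ is a bridge. You simply spell out the explicit solution of $df = e$ that the paper leaves as an observation.
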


Farbod Shokrieh suggested the following proof, which simplifies our original one.

\begin{comment}
\begin{proof}
	Suppose $G-e$ is connected. Then there exists a spanning tree $T$ that does not contain the edge $e$. If $\alpha_{T, e}$ is the fundamental cycle associated to $T$ and $e$, define $d\alpha_{T,e} = \sum_{\tilde{e}} \alpha_{T,e}(\tilde{e}) .d\tilde{e}$.
Then $d\alpha_{T,e}$ is harmonic and $\int_e d\alpha_{T, e} = \ell(e) \not= 0$. Hence $\int_e \not= 0$.
	
	Now suppose $G-e$ is disconnected, in which case every spanning tree in $G$ contains the edge $e$. Fix such a tree $T$. Let $e'$ be an edge in the complement of $T$, and $\alpha_{T, e'}$ the associated fundamental cycle. Then $\alpha_{T, e'}(e) = 0$. Indeed, since each component of $T-e = T \smallsetminus e$ is a spanning tree in a component of $G-e$, the path~$P$ defining the cycle $\alpha_{T, e'}$ resides entirely in one such component of $G-e$. That is, $P$ does not traverse the edge $e'$. It follows that $\int_e d\alpha_{T, e'} = 0$ for any edge $e'$ in the complement of $T$. The fundamental cycles with respect to $T$ form a basis of $H_1(G, \RR)$,
% (as pointed out just before the proof of Theorem~\ref{Thm: Limit Jacobian}),
and so the associated harmonic 1-forms constitute a basis for $\Omega(G)^*$. We conclude that $\int_e  = 0$.
\end{proof}
\end{comment}

\begin{proof}
Observe that $e \in \im(d)$ if and only if $e$ is a bridge; i.e., if and only if $G-e$ is disconnected. Now apply Lemma~\ref{Lem: Perfect Pairing}.
\end{proof}

%\begin{remark} 
%Farbod Shokrieh has pointed out that one can also prove Lemma~\ref{Lem: Connected vs. cycles} by observing that $e \in \im(d)$ if and only if $G-e$ is disconnected, and then applying Lemma~\ref{Lem: Perfect Pairing}.
%\end{remark}

\begin{lem} \label{Lem: Length of sub-edge}
	Let $G$ be a weighted graph and $G'$ any refinement. If $e' \in E(G')$ is one of the edges in the subdivision 
	of the edge $e \in E(G)$, then 
		\benn
			\int_{e'} = \frac{\ell(e')}{\ell(e)}\int_{r(e)} \ ,
		\eenn
	where $r: C_1(G, \RR) \to C_1(G', \RR)$ is the refinement homomorphism.
\end{lem}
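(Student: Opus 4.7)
The plan is to verify the equality by evaluating both sides of the identity as linear functionals on $\Omega(G')$. Since Lemma~\ref{Lem: Refinement}(2) tells us that the push-forward $r_*: \Omega(G) \to \Omega(G')$ is an isomorphism, it suffices to check the equality on elements of the form $r_*(\omega)$ for $\omega \in \Omega(G)$.

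Concretely, suppose $e$ is subdivided in $G'$ into edges $e_1, \ldots, e_n$ (one of which is $e'$), so that $r(e) = e_1 + \cdots + e_n$ and $r_*(de) = de_1 + \cdots + de_n$. Write $\omega = \sum_{\tilde{e} \in E(G)} \omega_{\tilde{e}}\, d\tilde{e}$. For the right-hand side, the compatibility of the integration pairing from Lemma~\ref{Lem: Refinement}(2) immediately yields
\[
\int_{r(e)} r_*(\omega) \;=\; \int_e \omega \;=\; \omega_e\, \ell(e).
\]
For the left-hand side, observe that the only contribution to the coefficient of $de'$ in $r_*(\omega)$ comes from the term $\omega_e\, r_*(de) = \omega_e(de_1 + \cdots + de_n)$, so this coefficient equals $\omega_e$. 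Hence $\int_{e'} r_*(\omega) = \omega_e\, \ell(e')$. Comparing,
\[
\int_{e'} r_*(\omega) \;=\; \omega_e\, \ell(e') \;=\; \frac{\ell(e')}{\ell(e)}\,\omega_e\,\ell(e) \;=\; \frac{\ell(e')}{\ell(e)}\int_{r(e)} r_*(\omega).
\]

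Since $r_*$ maps onto $\Omega(G')$, this identity of linear functionals extends to all of $\Omega(G')$, giving the claimed equality in $\Omega(G')^*$. There is no substantive obstacle here: the lemma is essentially an unwinding of the definitions, and the only nontrivial ingredient is the compatibility statement already established in Lemma~\ref{Lem: Refinement}(2).
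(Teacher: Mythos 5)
Your proof is correct and follows essentially the same route as the paper: evaluate both functionals on push-forwards $r_*(\omega)$ of harmonic $1$-forms, note that the coefficient of $de'$ in $r_*(\omega)$ is $\omega_e$, and conclude via the isomorphism $r_*: \Omega(G) \simarrow \Omega(G')$. No issues.
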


\begin{proof}
Observe that for any harmonic 1-form $\omega = \sum \omega_{\tilde{e}}. d\tilde{e}$ on $G$, we have
 	\[
		\int_{e'} r_*(\omega) = \omega_e \ \ell(e') =  \frac{\ell(e')}{\ell(e)} \omega_e\  \ell(e) 
			=  \frac{\ell(e')}{\ell(e)} \int_e \omega =  \frac{\ell(e')}{\ell(e)} \int_{r(e)} r_*(\omega). 
	\]
The equality of functionals $\int_{e'} = \frac{\ell(e')}{\ell(e)} \int_{r(e)} \in \Omega(G)^*$ follows.
\end{proof}

\begin{proof}[Proof of Theorem~\ref{Thm: Abel-Jacobi Properties}]
If we knew \textit{a priori} that $\Phi_p$ were continuous, then statement (2) would be an immediate consequence of the theory of covering spaces in algebraic topology. Instead, we give a direct construction of the lift $\widetilde{\Phi}_p$ and use it to prove continuity, as well as statements (3) and (4).

Take $\Gamma_1$ to be a simply-connected metric subgraph of 
$\Gamma$ containing the point $p$. Choose a refinement $G$ of $G_0$ such that $V(G)$ contains $p$ and a vertex set for $\Gamma_1$. Let $T$ be the tree in $G$ whose realization is the metric tree $\Gamma_1$. Given a point $q \in \Gamma_1$ and a refinement of $G$ containing $q$ as a vertex, say $G'$, there is a unique minimal path $P$ from $p$ to $q$ in $T'$, the refinement of $T$ in $G'$. Let us view $P$ as a path in $G'$. Define $\widetilde{\Phi}_p(q) = \int_{\alpha_P}$. As the integration pairing is compatible with refinement, the map $\widetilde{\Phi}_p$ is well-defined independent of the choice of $G$ and $G'$. It is clear from the construction that this map is indeed a lift of $\Phi_p$, so (2) is proved. 

Now we give a local formula for the lift $\widetilde{\Phi}_p$. Let $e$ be a segment of $\Gamma$ and suppose $q, q'$ are two points on $e$. Choose a model $G$ containing the vertices $p, q$ and $q'$ such that $q$ and $q'$ are adjacent in $G$. Let $e'$ be a subsegment of $e$ relative to $G$ such that $\iota(e') = (q,q')$, perhaps after swapping $q$ and $q'$. Let $T$ be a tree in $G$ containing the edge $e'$ and the vertex $p$, and write $\Gamma_1$ for the associated metric subtree of $\Gamma$. Define $P$ to be the unique minimal path from $p$ to $q$ along $T$, viewed as a path in $G$, and let $P'$ be the path $P$ followed by the sequence $e', q'$. 

By Lemma~\ref{Lem: Length of sub-edge} we see that
	\[
			\widetilde{\Phi}_p(q') - \widetilde{\Phi}_p(q) = \int_{\alpha_{P'}} - \int_{\alpha_P} = \int_{e'}
			= \frac{\ell(e')}{\ell(e)} \int_{r(e)}.
	\]
Upon noting that $\int_{r(e)}$ and $\int_e$ have the same image in $\Omega(\Gamma)^*$, we deduce that
	\begin{equation} \label{Eq: Straight lines}
		\widetilde{\Phi}_p(q') = 	\widetilde{\Phi}_p(q) + \frac{\ell(e')}{\ell(e)} \int_e.
	\end{equation}

Now we turn to the proofs of (1), (3) and (4). To prove (1), it suffices to show that the lifts $\widetilde{\Phi}_p$ constructed in (2) are continuous. Fix a simply-connected subgraph $\Gamma_1 \subset \Gamma$ containing $p$. Let $q \in \Gamma_1$ and suppose $q'$ is a nearby point on the segment $e$, some part of which is contained in $\Gamma_1$. As $q' \to q$, the length of the segment $e'$ joining them must tend to zero, and the equation \eqref{Eq: Straight lines} shows that $\widetilde{\Phi}_p(q') \to \widetilde{\Phi}_p(q)$.

For (3) and (4), we choose a segment $e$ of $\Gamma$ and suppose that $\Gamma_1$ is a metric tree containing $p$ and $e$. Let $q$ be the tail of $e$, and suppose $q'$ is any other point of $e$. If the complement in $\Gamma$ of the interior of $e$ is disconnected, then $\int_e = 0$ by Lemma~\ref{Lem: Connected vs. cycles}. Formula~\eqref{Eq: Straight lines} shows that $\widetilde{\Phi}_p(q') = \widetilde{\Phi}_p(q)$ for all $q' \in e$, and statement (3) is proved after projecting to $J(\Gamma)$. On the other hand, if the complement of the interior of $e$ is connected, then $\int_e \not= 0$. As the point $q'$ is varied, $\widetilde{\Phi}_p(q')$ traces out a straight segment in $\Omega(\Gamma)^*$ in the direction of $\int_e$. This proves statement (4).

For (5), the result is clear if $\int_e = 0 \in \Omega(\Gamma)^*$, so let us assume otherwise. Then for any
	$\omega = \sum \omega_{\tilde{e}} d\tilde{e} \in \Omega_\ZZ(G)$, we have
		\[
			\frac{1}{\ell(e)}\int_e \omega =  \omega_e \in \ZZ.
		\]
	This proves the first claim. For the second claim, it suffices to produce a 1-form $\omega$ with integral slopes such that $\frac{1}{\ell(e)}\int_e \omega = \pm 1$. By Lemma~\ref{Lem: Connected vs. cycles}, we find that $G-e$ is connected. Choose a spanning tree $T$ that does not contain the edge $e$, and let $\alpha_{T,e}$ be the associated fundamental cycle. Then $\frac{1}{\ell(e)}\int_e d\alpha_{T,e} = 1$.
	
	To prove (6), i.e., show that $\Phi_p$ is tropical, we must verify the three conditions of the definition given at the end of \S\ref{Sec: Tropical Jacobian}. We have already shown above that the Abel-Jacobi map is continuous. If we choose a segment $e$ of $\Gamma$ and a simply-connected subgraph containing $e$ and the point $p$, then we constructed a lift $\widetilde{\Phi}_p: \Gamma_1 \to \Omega(\Gamma)^*$. Equation~\eqref{Eq: Straight lines} implies that $\widetilde{\Phi}_p$ is given locally by 
		\begin{equation} \label{Eq: Tangent computation}
			t \mapsto \widetilde{\Phi}_p(q) + t \left(\frac{1}{\ell(e)}\int_e\right),
		\end{equation}
where $t$ is a unit-speed coordinate on $e$ and $q$ is any point in the interior of $e$. In particular, $\widetilde{\Phi}_p$ carries a primitive integral tangent vector on $\Gamma$ to $\pm \frac{1}{\ell(e)}\int_e \in \Omega_\ZZ(\Gamma)^*$. 

	Finally, we  show that $\Phi_p$ is harmonic. Fix a point $q \in \Gamma$ of valence $v$, and let $\lambda_1, \ldots, \lambda_v$ be the primitive integral tangent vectors at $q$. Equation~\eqref{Eq: Tangent computation} gives a concrete means for describing the image of $\lambda_i$ under the differential map $D\Phi_p$. (To be precise, we should write $(D\Phi_p)_q$, but we drop the subscript $q$ to avoid cluttering notation.) Choose points $q_1, \ldots, q_v$ near $q$ and a model $G$ containing the points $p, q$, and $q_1, \ldots, q_v$ so that
	\[
		D\Phi_p(\lambda_i) = \frac{(-1)^{\epsilon_i}}{\ell(e_i)} \int_{e_i} \ , \qquad i = 1, \ldots, v, 
	\]
where $e_i$ is the segment of $\Gamma$ with endpoints $q$ and $q_i$, and $\epsilon_i = 0$ or~$1$ depending on whether the orientation of $e_i$ is given by $\iota(e_i) = (q, q_i)$ or $\iota(e_i) = (q_i, q)$. Let $\omega= \sum_e \omega_e.de$ be a harmonic 1-form. Then
	\[
		\sum_i \frac{(-1)^{\epsilon_i}}{\ell(e_i)} \int_{e_i} \omega = 
		\sum_i (-1)^{\epsilon_i} \omega_{e_i} = -\sum_{\substack{i \\ e_i^+ = q}} \omega_{e_i}
			+ \sum_{\substack{i \\ e_i^- = q}} \omega_{e_i} = 0.
	\]
Therefore we have the equality of linear functionals $\sum_i D\Phi_p(\lambda_i) = 0$, which completes the proof that $\Phi_p$ is harmonic, and also the proof that $\Phi_p$ is tropical.
\end{proof}

% \begin{remark}
%	As an alternative to the approach taken in this section, one could define a discrete Abel-Jacobi 
%	map $\Phi_{G, p}: V(G) \to J(G)$ for any weighted graph $G$ and a fixed vertex $p$. For any 
%	refinement $G'$ of $G$, this map is compatible with the inclusion $V(G) \hookrightarrow V(G')$
%	and the refinement homomorphism $\rho: J(G) \hookrightarrow J(G')$. 
%	Passing to the direct limit over all models $\Gamma$ containing the vertex $p$ gives an 
%	alternative definition of the Abel-Jacobi map $\Phi_p$.
% \end{remark}

\begin{figure}[ht]
	\begin{center}
	\scalebox{.82}{
	\begin{picture}(100,160)(-45,20)
		\put(-200,60){\includegraphics{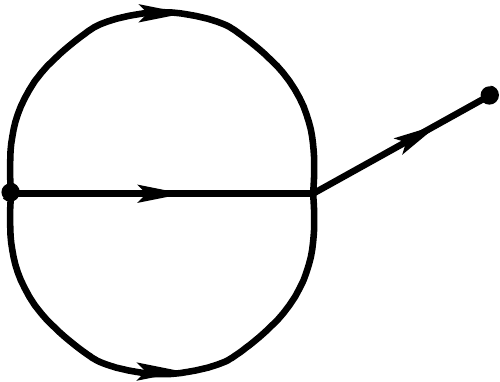}}
		\put(-210, 110){$p$}
		\put(-150, 175){$e_1$}
		\put(-150, 122){$e_2$}
		\put(-150, 71){$e_3$}
		\put(-83, 142){$e_4$}
		\put(-80, 90){$\ell(e_1)=a$}
		\put(-80, 75){$\ell(e_2)=b$}
		\put(-80, 60){$\ell(e_3)=c$}
		\put(-200, 50){$\Gamma$}
		
		\put(80,30){\includegraphics{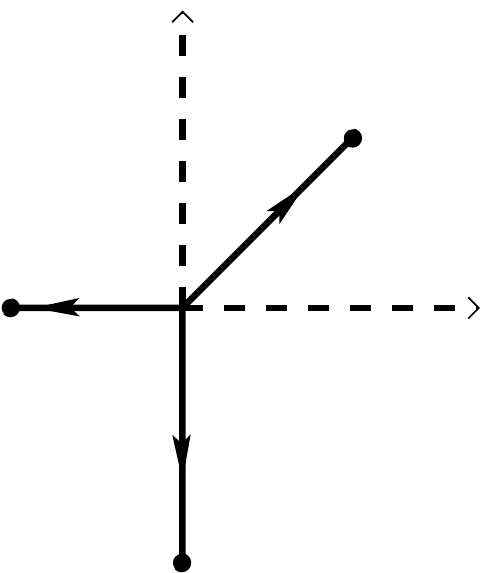}}
		\put(185, 164){$(a,a)$}
		\put(45, 95){$(-b,0)$}
		\put(140, 28){$(0,-c)$}
		\put(165,125){$\Phi_p(e_1)$}
		\put(85,115){$\Phi_p(e_2)$}
		\put(140,70){$\Phi_p(e_3)$}
		\put(50, 50){$\Omega(\Gamma)^*$}
	\end{picture}
	}
	\end{center}
	
	\caption{
		At left we have a diagram of a tropical curve $\Gamma$, implicitly given by a model $G$ with
		four edges $\{e_1, e_2, e_3, e_4\}$ and three vertices. At right, the bold segments form the image of 
		the tropical Abel-Jacobi map $\Phi_p$ inside $\Omega(\Gamma)^*$. 
		The dashed coordinate axes are drawn relative to the ordered integral 
		basis $\{\int_A, \int_B\}$ for $\Omega_\ZZ(\Gamma)^*$. See Example~\ref{Example: Theta}
		for further discussion. See also \cite[Fig.~7, \S6.2]{Mikhalkin_Zharkov_Tropical_Jacobians_2008} 
		for another illustration of this example. 
		}
	\label{Fig: Theta}
\end{figure}

\begin{figure}[ht]
	\scalebox{.69}{\includegraphics{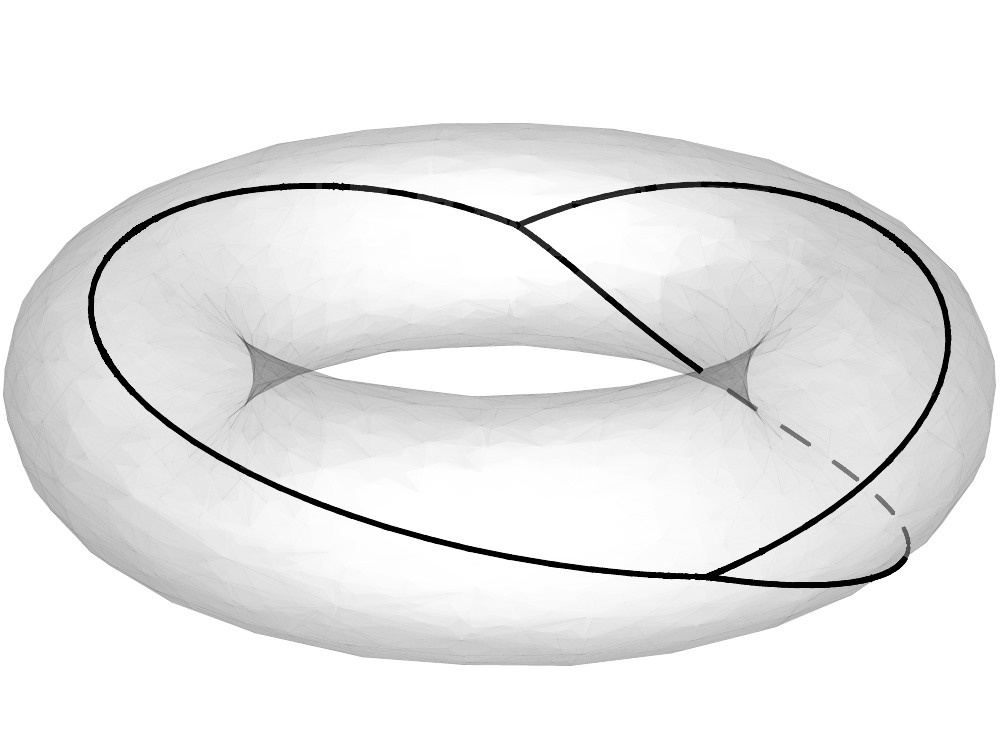}}

	\caption{
		Let $\Gamma$ be the tropical curve depicted in Figure~\ref{Fig: Theta}. Here we see an illustration of
		the Jacobian $J(\Gamma)$ embedded as a hollow torus in $\RR^3$. The black curve is the image of 
		$\Gamma$ via the Abel-Jacobi map. (Figure drawn with \textit{Mathematica} 6.0.1.0.)
		}
	\label{Fig: Torus Theta}
\end{figure}

\begin{example} \label{Example: Theta}
	Consider the tropical curve $\Gamma$ illustrated in Figure~\ref{Fig: Theta}. For the indicated 
	basepoint $p$, we want to compute the image of the Abel-Jacobi map $\Phi_p$. 
	
	In order to describe the basic functionals $\int_e$ in some meaningful way, we choose a basis for 
	$\Omega_\ZZ(\Gamma)^*$. 
	Let $d\alpha = de_1 - de_2$ and $d\beta = de_1 - de_3$ be a basis of $\Omega(\Gamma)$, and by duality write 
	$A, B \in H_1(\Gamma, \RR)$ for the cycles such that
		\[
			\int_A d\alpha = \int_B d\beta = 1, \qquad \int_A d\beta = \int_B d\alpha = 0.
		\]
	Then $\{\int_A, \int_B\}$ generate $\Omega_\ZZ(\Gamma)^*$ as a $\ZZ$-module. It follows that
		\begin{equation} \label{Eq: Tropical Theta}
			\int_{e_1} = a \int_A + a \int_B \ , \qquad
			\int_{e_2} = -b \int_A \ , \qquad \int_{e_3} = -c \int_B \ , \qquad  \int_{e_4} = 0,
		\end{equation}
	as can easily be verified by evaluation on the basis $\{d\alpha, d\beta\}$ 
	for $\Omega(\Gamma)$. 
	
	If $\vec{v}$ is a primitive integral tangent vector at $p$ pointing along the edge $e$, then we can speak of the 
	point $p + t\vec{v}$ at distance $t$ from $p$, at least if $t \leq \ell(e)$. 
	Lemma~\ref{Lem: Length of sub-edge} shows that 
	$\widetilde{\Phi}_p(p + t\vec{v}) = \frac{t}{\ell(e)}\int_e$ in 
	$\Omega(\Gamma)^*$. So when $e = e_1$, for example, 
	we find that
		\[
			\widetilde{\Phi}_p(p + t\vec{v}) = t\left( \int_A   +\int_B\right) , \quad 0 \leq t \leq a.
		\]
	Plotting this in coordinates relative to the ordered basis $\{\int_A, \int_B\}$ gives the illustration on the right
	in Figure~\ref{Fig: Theta}. 
	
	The lattice $H_1(\Gamma, \ZZ)$ is generated by $e_1 - e_2$ and $e_1 - e_3$, and so its image
	in $\Omega(\Gamma)^*$ is generated by 
		\[
			\int_{e_1 - e_2} = (a+b)\int_A +\  a\int_B \ , \qquad \int_{e_1-e_3} = a \int_A +\  (a+c)\int_B;
		\]
	i.e., generators are given by $(a+b, a)$ and $(a, a+c)$ using coordinates relative to $\{\int_A, \int_B\}$. 
	In particular, this shows that the endpoints $(a,a), (-b,0), (0,-c)$ of the three segments in $\widetilde{\Phi}_p(\Gamma)$ agree 
	modulo the lattice $H_1(\Gamma, \ZZ)$. 
	An illustration of $\Phi_p(\Gamma)$ is given in 
	Figure~\ref{Fig: Torus Theta}
\end{example}

% ----------------------------------------------------------------
% ----------------------------------------------------------------

\section{Metric Structures on the Tropical Jacobian}
\label{Sec: Metrics}

	Suppose we are given a tropical curve $\Gamma$, a tropical torus $(V/\Lambda, \Lambda')$, and a tropical map $f: \Gamma \to V/\Lambda$. If we further endow $V/\Lambda$ with some kind of metric data, then one can ask how the length of a segment $e \subset \Gamma$ compares with the length of its image $f(e)$.  Near a point $q \in \Gamma$ there is a neighborhood $U_q$ and a continuous lift $\tilde{f}: U_q \to V$ such that segments of $U_q$ map to straight segments in $V$. Each such segment in $V$ is a translate of a scalar multiple of some lattice vector in $\Lambda'$, so it suffices to define the lengths of elements of $\Lambda'$ in order to prescribe a well-defined length to the image segment $f(e)$.
	
	In this section, we will give three canonical notions of length for the lattice $\Omega_\ZZ(\Gamma)^*$ defining the tropical structure on the Jacobian $J(\Gamma)= \Omega(\Gamma)^* / H_1(\Gamma, \ZZ)$. Each has its own merits, but the common theme is that we can give explicit formulas for each in terms of either combinatorial graph theory or circuit theory.
	
\subsection{The Tropical Metric}
	
\begin{define}
	Let $(V/\Lambda, \Lambda')$ be a tropical torus, let 
	$\RR \Lambda' = \{ t \lambda' \; : \; t \in \RR, \lambda' \in \Lambda' \}$ be the union of all lines in $V$ containing some nonzero $\lambda' \in \Lambda'$, and let $v \in \RR\Lambda'$ be any vector.  
	Write $v = t\lambda$ with $t \in \RR$ and $\lambda \in \Lambda'$ primitive.
	The \textbf{tropical length} of $v=t\lambda$ is defined to be 
	$\|v\|_\ZZ = |t|$. 
%	We extend this definition by scalar multiplication: if $t$ is any real number, we set $\|t\lambda\|_{\ZZ} = |t|$. 
	We call $\|\cdot\|_\ZZ$ the \textbf{tropical metric} on $\RR\Lambda'$.
\end{define}

The function $\| \cdot \|_\ZZ : \RR\Lambda' \to \RR_{\geq 0}$ is \textit{not} a norm in the usual sense (unless $V$ has
dimension~$1$). For example, let
$V = \RR^2$ and $\Lambda' = \ZZ^2$. Then 
$\| (2,1) \|_\ZZ = \| (1,2) \|_\ZZ = 1$, but
% $(2,1)$ and $(1,2)$ are primitive lattice vectors, but their sum is not:
$\left\|(2,1) + (1,2) \right\|_\ZZ = 3$. Hence the triangle inequality fails for $\|\cdot\|_\ZZ$. This will cause no problems for us, though, because we will only be using the tropical metric to define lengths of straight segments in $V$ (which are one-dimensional objects).

\begin{thm} \label{Thm: Tropical Isometry}
	Let $\Gamma$ be a tropical curve, and let $p \in \Gamma$ be a basepoint. Then the Abel-Jacobi map 
	$\Phi_p: \Gamma \to J(\Gamma)$ is a tropical isometry along any segment of $\Gamma$ that is not collapsed to a point.
\end{thm}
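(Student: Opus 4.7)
The plan is to read off the statement directly from the local formulas already proved inside Theorem~\ref{Thm: Abel-Jacobi Properties}, together with the primitivity assertion in part (5) of that theorem. The heavy lifting has already been done; what remains is to match the formula for the Abel-Jacobi map along a segment with the definition of $\|\cdot\|_\ZZ$.

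Fix a segment $e$ of $\Gamma$ that is not collapsed by $\Phi_p$. By Theorem~\ref{Thm: Abel-Jacobi Properties}(3), the complement in $\Gamma$ of the interior of $e$ is connected, and by Lemma~\ref{Lem: Connected vs. cycles} we have $\int_e \neq 0$ in $\Omega(\Gamma)^*$. Choose a simply-connected subgraph $\Gamma_1 \subseteq \Gamma$ containing $e$ and the basepoint $p$, and let $\widetilde{\Phi}_p : \Gamma_1 \to \Omega(\Gamma)^*$ be the unique lift furnished by Theorem~\ref{Thm: Abel-Jacobi Properties}(2) satisfying $\widetilde{\Phi}_p(p) = 0$. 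Parameterize $e$ by a unit-speed coordinate $t \in [0, \ell(e)]$, say with $t=0$ at the tail $q$ of $e$ and with primitive integral tangent vector $\vec{v}$ pointing along $e$.

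The formula~\eqref{Eq: Tangent computation} (equivalently \eqref{Eq: Straight lines}) derived in the proof of Theorem~\ref{Thm: Abel-Jacobi Properties} gives
\[
\widetilde{\Phi}_p(q + t\vec{v}) \;=\; \widetilde{\Phi}_p(q) \;+\; t \cdot \lambda, \qquad \text{where } \lambda := \frac{1}{\ell(e)} \int_e.
\]
By Theorem~\ref{Thm: Abel-Jacobi Properties}(5), since $\int_e \neq 0$, the vector $\lambda$ is a \emph{primitive} element of the lattice $\Omega_\ZZ(\Gamma)^*$. So from the definition of the tropical metric, the displacement $t \lambda$ has tropical length $\|t\lambda\|_\ZZ = t$. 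In other words, the lift $\widetilde{\Phi}_p$ restricted to $e$ is a unit-speed parameterization of a straight segment in $\Omega(\Gamma)^*$ with respect to $\|\cdot\|_\ZZ$.

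Finally, to descend this to the Jacobian, note that the tropical length on $J(\Gamma)$ is defined locally via lifts to the universal cover $\Omega(\Gamma)^*$, using the canonical identification of the tangent lattice $\Omega_\ZZ(\Gamma)^*$ with the tropical structure on $J(\Gamma)$; in particular, the projection $\Omega(\Gamma)^* \twoheadrightarrow J(\Gamma)$ is a local tropical isometry on straight segments. Hence the composition $\Phi_p|_e = \pi \circ \widetilde{\Phi}_p|_e$ is itself a unit-speed map with respect to tropical length, i.e., a tropical isometry along $e$. The only possible obstacle is a notational one, namely verifying that the notion of ``tropical length of $\Phi_p(e)$'' in $J(\Gamma)$ agrees with the tropical length of its straight lift in $\Omega(\Gamma)^*$; this however is immediate from the local product structure of the torus and the fact that $\Phi_p(e)$ is a single straight segment by Theorem~\ref{Thm: Abel-Jacobi Properties}(4).
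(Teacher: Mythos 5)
Your proposal is correct and takes essentially the same route as the paper's own proof: both work locally with the lift $\widetilde{\Phi}_p$ from Theorem~\ref{Thm: Abel-Jacobi Properties}(2), use the local formula \eqref{Eq: Tangent computation}, and invoke the primitivity of $\frac{1}{\ell(e)}\int_e$ in $\Omega_\ZZ(\Gamma)^*$ from part (5) to conclude that primitive integral tangent vectors map to vectors of tropical length~$1$. The only difference is cosmetic: you spell out via part (3) and Lemma~\ref{Lem: Connected vs. cycles} why non-collapsed implies $\int_e \neq 0$, and you make the descent from $\Omega(\Gamma)^*$ to $J(\Gamma)$ explicit, both of which the paper leaves implicit.
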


\begin{proof}

	We already know the Abel-Jacobi map is tropical. To check that it is an isometry, we work locally. Let $q \in \Gamma$ be a point of valence~2 on some segment~$e$, and let $\Gamma_1  \subset \Gamma$ be a simply-connected metric subgraph containing the basepoint~$p$ and the segment~$e$. By Theorem~\ref{Thm: Abel-Jacobi Properties}(2),  there is a lift $\widetilde{\Phi}_p: \Gamma_1 \to \Omega(\Gamma)^*$, and we saw in \eqref{Eq: Tangent computation} that it can be computed locally by 
	\[
		t \mapsto \widetilde{\Phi}_p(q) + t \left( \frac{1}{\ell(e)} \int_e \right).
	\]
If $D\Phi_p$ is the induced map on tangent spaces and $\vec{v} = \frac{d}{dt}$ is a primitive integral tangent vector 
at $q$, then
	\[
		\left\|D\Phi_p(\vec{v})\right\|_\ZZ = \left\| \frac{1}{\ell(e)} \int_e \right\|_\ZZ.
	\]
	
	Suppose now that $e$ is not collapsed by the Abel-Jacobi map. Then  $\int_e \not= 0$ in $\Omega(\Gamma)^*$, and Theorem~\ref{Thm: Abel-Jacobi Properties}(5) shows that $\frac{1}{\ell(e)}\int_e$ is a primitive element of  the lattice $\Omega_\ZZ(\Gamma)^*$. In particular, it has tropical length~1, and so the above computation proves that $\Phi$ carries unit-length tangent vectors to unit-length tangent vectors. Hence the lengths of $e$ and of the image segment $\Phi_p(e)$ agree.
\end{proof}

%\begin{remark}
%	This result can be rephrased by saying $\Phi_p$ is a tropical morphism of degree~1. 
%\end{remark}

\subsection{The Foster/Zhang Metric}
\label{Section: Foster/Zhang Metric}

	The Foster/Zhang metric is most easily defined in terms of a model for $\Gamma$, although it does not depend on 
	the choice of model. Let $G$ be a weighted graph and enumerate the edges of $G$ by $e_1, \ldots, e_m$. For 
	any 1-chain $\alpha = \sum_j a_j . e_j \in C_1(G, \RR)$, we define its \textbf{supremum norm} to be 
	\benn
		\|\alpha\|_{\sup} = \max_{1 \leq j \leq m} |a_j|.
	\eenn

		Let $\pi: C_1(G, \RR) \to H_1(G, \RR) \subset C_1(G, \RR)$
		be the orthogonal projection map relative to the inner product defined in \S\ref{Sec: Wtd graphs}. 
		The \textbf{supremum norm} of an element $\int_\alpha \in \Omega(G)^*$ is defined to be
		\[
			\left\|\int_\alpha\right\|_{\sup} = \|\pi(\alpha)\|_{\sup},
		\]
	where $\alpha \in C_1(G, \RR)$. This quantity is well-defined by Lemma~\ref{Lem: Perfect Pairing}; i.e., 
	if $\int_\alpha = \int_\beta$, then  $\alpha = \beta +df  \Rightarrow \pi(\alpha) = \pi(\beta)$.
	
\begin{lem} \label{Lem: Foster refinement}
	Let $G$ and $G'$ be weighted graphs such that $G'$ refines $G$, and let $r: C_1(G, \RR) \to C_1(G',\RR)$ 
	be the refinement homomorphism on chain spaces. Then the induced isomorphism 
	$\Omega_1(G, \RR)^* \to \Omega_1(G', \RR)^*$ given by $\int_\alpha \mapsto \int_{r(\alpha)}$ is an 
	isometry for the supremum norm.
\end{lem}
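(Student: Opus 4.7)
The plan is to reduce the statement to two facts: that the refinement homomorphism $r$ intertwines the orthogonal projections onto cycle spaces, and that on cycles $r$ preserves the sup-norm. Combining these, if $\pi_G \colon C_1(G,\RR) \to H_1(G,\RR)$ and $\pi_{G'} \colon C_1(G',\RR) \to H_1(G',\RR)$ denote orthogonal projections relative to the inner products defined in \S\ref{Sec: Wtd graphs}, then for any $\alpha \in C_1(G,\RR)$ we will show
\benn
	\left\|\int_{r(\alpha)}\right\|_{\sup} = \|\pi_{G'}(r(\alpha))\|_{\sup} = \|r(\pi_G(\alpha))\|_{\sup} = \|\pi_G(\alpha)\|_{\sup} = \left\|\int_\alpha\right\|_{\sup},
\eenn
where the first and last equalities are the definition of the sup-norm on dual spaces.

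First I would verify the identity $r\circ\pi_G = \pi_{G'}\circ r$. Since $\pi_G$ is projection along $\im(d_1)$ onto $H_1(G,\RR)$, we can write $\alpha = \pi_G(\alpha) + df$ for some $f \in C_0(G,\RR)$. Applying $r$ gives $r(\alpha) = r(\pi_G(\alpha)) + r(df)$. By Lemma~\ref{Lem: Refinement}(1), $r$ carries $H_1(G,\RR)$ isomorphically onto $H_1(G',\RR)$, so $r(\pi_G(\alpha)) \in H_1(G',\RR)$; and by the same lemma $r(\im(d_1)) \subseteq \im(d_2)$, so $r(df) \in \im(d_2) = H_1(G',\RR)^{\perp}$, where orthogonality uses the Hodge decomposition \eqref{Eqn: Chain Decomposition}. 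Uniqueness of the orthogonal decomposition on $G'$ then forces $\pi_{G'}(r(\alpha)) = r(\pi_G(\alpha))$.

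Next I would verify that $r$ is a sup-norm isometry on cycles. By induction on the refinement, reduce to the case of subdividing a single edge $e_0$ into $e_1, e_2$. Then for $\gamma = \sum_e \gamma_e \cdot e \in H_1(G,\RR)$, the chain $r(\gamma)$ has coefficient $\gamma_e$ on every edge $e' \in E(G')$ lying over $e$. Taking the maximum absolute value of coefficients therefore gives the same number before and after refinement: $\|r(\gamma)\|_{\sup} = \|\gamma\|_{\sup}$.

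The main (minor) obstacle is step one, since the inner products on $C_1(G,\RR)$ and $C_1(G',\RR)$ are defined using different edge length data, so one must be careful that orthogonality is preserved by $r$. The argument above sidesteps this by never using that $r$ is an isometry of inner product spaces — only that $r$ maps the two summands of the Hodge decomposition of $G$ into the corresponding summands for $G'$, which is already recorded in Lemma~\ref{Lem: Refinement}(1). Together with the well-definedness statement already proved (if $\int_\alpha = \int_\beta$ then $\pi_G(\alpha) = \pi_G(\beta)$, and similarly on $G'$), this yields the displayed chain of equalities and completes the proof.
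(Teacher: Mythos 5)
Your proof is correct, and it takes a slightly different (and somewhat stronger) route than the paper. The paper's own proof never handles arbitrary chain representatives: it notes that every element of $\Omega(G)^*$ is $\int_\alpha$ for some $\alpha \in H_1(G,\RR)$ (Lemma~\ref{Lem: Perfect Pairing}), on which both projections act as the identity, so the whole lemma reduces to checking $\|\alpha\|_{\sup} = \|r(\alpha)\|_{\sup}$ for cycles, which holds because $r(\alpha)$ has the same set of coefficients as $\alpha$. You instead prove the intertwining identity $r\circ\pi_G = \pi_{G'}\circ r$ on all of $C_1(G,\RR)$ via uniqueness of the Hodge decomposition, and then invoke coefficient preservation on cycles. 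Your route costs one extra step but buys a genuinely useful compatibility statement: the same intertwining argument immediately gives the Euclidean-metric analogue (Lemma~\ref{Lem: L^2 refinement}) and clarifies that $\int_\alpha \mapsto \int_{r(\alpha)}$ is consistent for \emph{any} chain representative, which the paper leaves implicit. Two small points: your claim $r(\im(d_1)) \subseteq \im(d_2)$ over $\RR$ is correct but is only recorded in the statement of Lemma~\ref{Lem: Refinement}(1) in its integral form $r(\im(d_1)_\ZZ) \subseteq \im(d_2)_\ZZ$; the real version is what is actually established inside that lemma's proof (the construction of $g$ with $d_2g = r(d_1f)$ works for any real-valued $f$), so you should cite the proof rather than the statement. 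Also, the induction on single-edge subdivisions in your last step is unnecessary, since by definition $r(\gamma)(e') = \gamma(e)$ for every edge $e'$ of $G'$ lying over $e$, so the equality of sup-norms on cycles holds for an arbitrary refinement at once.
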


\begin{proof}
	Suppose $\alpha \in H_1(G, \RR)$. Then $\pi(\alpha) = \alpha$ and $\pi'(r(\alpha)) = r(\alpha)$, where
	$\pi':C_1(G', \RR) \to H_1(G', \RR)$ is the orthogonal projection for $G'$. So it suffices to prove that
	$\|\alpha\|_{\sup} = \|r(\alpha)\|_{\sup}$. But if we expand $\alpha$ and $r(\alpha)$ in terms of the 
	edge bases for $C_1(G, \RR)$ and $C_1(G',\RR)$, they have the same set of coefficients, and hence the same supremum norm.
	% In particular, the maximum of the absolute values of the coefficients of $\alpha$ and $r(\alpha)$ must agree, which is what we wanted to prove.
\end{proof}

\begin{define}
	An element $s \in \Omega(\Gamma)^*$ can be represented by choosing a model $G$ and a functional 
	$\int_\alpha \in \Omega(G)^*$. Define 
	$\|s\|_{\sup} = \left\|\int_\alpha \right\|_{\sup}$. By the previous lemma, this quantity is well-defined
	independent of the choice of model. We call $\|\cdot\|_{\sup}$ the \textbf{Foster/Zhang metric} on 
	$\Omega(\Gamma)^*$. 
	\end{define}

	A particular case of the above constructions deserves a special name:
	
\begin{define}
	Let $G$ be a weighted graph. The \textbf{Foster coefficient} associated to an edge $e$ of $G$ is
	\benn
		F(e) = \|\pi(e)\|_{\sup}.
	\eenn
\end{define}

We will now show that the Foster coefficients arise naturally as the lengths (with respect to the Foster/Zhang metric) of images of segments under the Abel-Jacobi map. In the next section, we give  interpretations of the Foster coefficients in terms of spanning trees, circuit theory, and random walks.

In the statement of the next result, we let $g = g(\Gamma)$ denote the
 \textbf{genus} of $\Gamma$, i.e.,
$g(\Gamma) = \operatorname{rk} H_1(\Gamma, \ZZ) = \dim J(\Gamma)$.
For any model $G$ of $\Gamma$, we have $g(\Gamma) = \#E(G)-\#V(G)+1$.
			
\begin{thm} \label{Thm: Foster Metric}
	Let $\Gamma$ be a tropical curve, and let $p \in \Gamma$ be a basepoint. If we endow the Jacobian 
	$J(\Gamma)$ with the Foster/Zhang metric, then the image of a segment $e \subset \Gamma$ 
	under the Abel-Jacobi map $\Phi_p: \Gamma \to J(\Gamma)$ has length $F(e)$. The image graph $\Phi_p(\Gamma)$ has total length $g(\Gamma)$, and each segment
of the image graph has length at most 1, with equality if and only if the segment forms a loop.
\end{thm}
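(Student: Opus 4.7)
The first claim follows directly from unwinding definitions. By Theorem~\ref{Thm: Abel-Jacobi Properties}(4) together with the local formula~\eqref{Eq: Tangent computation}, the image $\Phi_p(e)$ is a straight line segment in $J(\Gamma) = \Omega(\Gamma)^*/H_1(\Gamma,\ZZ)$ whose total displacement is $\int_e \in \Omega(\Gamma)^*$. By the definition of the Foster/Zhang metric via the orthogonal projection $\pi$, its length is $\left\|\int_e\right\|_{\sup} = \|\pi(e)\|_{\sup} = F(e)$.

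For the remaining two claims I would reduce everything to a single identity, namely
\[
    F(e) \;=\; \pi(e)(e) \;=\; 1 - R(e)/\ell(e),
\]
where $R(e)$ denotes the effective resistance between $e^-$ and $e^+$ in the resistor network determined by $G$. Granted this identity, the total length equals $\sum_e F(e) = \sum_e \pi(e)(e) = \operatorname{tr}(\pi)$, computed in the edge basis; since $\pi: C_1(G,\RR) \to H_1(G,\RR)$ is the orthogonal projection, its trace equals $\dim H_1(G,\RR) = g(\Gamma)$. The bound $F(e) \leq 1$ is immediate from $0 \leq R(e) \leq \ell(e)$, with equality iff $R(e) = 0$; in a connected graph this happens iff $e^+ = e^-$, i.e., $e$ is a loop edge, in which case $e$ is already a $1$-cycle, $\pi(e) = e$, and $\Phi_p(e)$ is a closed loop in $J(\Gamma)$.

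To prove the identity, I would work from the Hodge decomposition $e = \pi(e) + df$. Then $\Delta f = e^+ - e^-$, so $df$ represents the unit-current electrical flow in $G$ from $e^-$ to $e^+$. The diagonal coefficient is $\pi(e)(e) = 1 - df(e) = 1 - R(e)/\ell(e)$ by Ohm's law. For $e' \neq e$, $\pi(e)(e') = -df(e')$; the key observation is that the restriction of $df$ to the edges of $G - e$ is itself a gradient with divergence $F(e)(\delta_{e^+} - \delta_{e^-})$, hence the unique potential flow in $G-e$ obtained by injecting $F(e)$ units of current between $e^+$ and $e^-$. The classical flow-magnitude bound for electrical currents, which can be derived from a polarization identity for effective resistance combined with the triangle inequality $|R_{G-e}(x,u) - R_{G-e}(x,v)| \leq \ell(e')$ for $e' = uv$, then yields $|df(e')| \leq F(e)$, completing the identity.

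I expect the main technical obstacle to be this circuit-theoretic bound $|df(e')| \leq F(e)$, which is what identifies the sup-norm of $\pi(e)$ with its diagonal coefficient. It is a classical fact about electrical flows in resistor networks and is closely related to the random-spanning-tree interpretation: by Kirchhoff's theorem, $R(e)/\ell(e)$ is the probability that $e$ lies in a suitably weighted random spanning tree $T$ of $G$, so $F(e)$ is the complementary probability and $\sum_e F(e) = \#E(G) - \mathbb{E}|T| = \#E(G) - (\#V(G) - 1) = g(\Gamma)$ is a restatement of Foster's theorem. These viewpoints are developed systematically in the paper's final section and provide an alternate route to the total-length statement.
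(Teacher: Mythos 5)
Your argument is correct, and the first claim is handled exactly as in the paper (the local formula \eqref{Eq: Tangent computation} from the proof of Theorem~\ref{Thm: Tropical Isometry} gives scaling factor $F(e)/\ell(e)$ along $e$). For the last two claims, however, you take a genuinely different route. The paper quotes Theorem~\ref{Thm: Foster Interpretation} (proved via the contraction identity $w(G/e)=r(e)w(G)/\ell(e)$) for $F(e)=1-r(e)/\ell(e)$, and gets the total length from Foster's Theorem (Corollary~\ref{Thm: Foster}), itself deduced from the spanning-tree projection formula (Theorem~\ref{Thm: Projections-Trees} and Corollary~\ref{Cor: Foster-Trees}); in particular, the fact that $\|\pi(e)\|_{\sup}$ is attained on the diagonal coefficient comes there from $|\alpha_{T,e_i}(e_j)|\le 1$. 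You instead prove the single identity $F(e)=\pi(e)(e)=1-r(e)/\ell(e)$ directly by potential theory: write $e=\pi(e)+df$, read off the diagonal coefficient by Ohm's law, and bound the off-diagonal coefficients by the strength of the induced flow in $G-e$ via the polarization identity and the triangle inequality for effective resistance (with the bridge and loop cases handled trivially); the total length then falls out as $\sum_e\pi(e)(e)=\operatorname{tr}(\pi)=\dim H_1(G,\RR)=g$, which is Flanders-style proof of Foster's theorem that the paper only alludes to. Your flow-magnitude bound $|df(e')|\le 1-r(e)/\ell(e)$ is the heaviest step and is essentially equivalent in content to Corollary~\ref{Cor: Foster-Trees}; your sketch of it does go through, but you should be careful that ``$F(e)$'' in that step must mean the number $1-r(e)/\ell(e)$ (not yet identified with $\|\pi(e)\|_{\sup}$) to avoid circularity, and you need $r(e)\le\ell(e)$ to know this diagonal entry is nonnegative before concluding it realizes the sup. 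What the two approaches buy: yours is self-contained circuit/potential theory and gives the total-length statement without any spanning-tree enumeration; the paper's spanning-tree route yields the explicit combinatorial formulas for all entries of $\pi$, which it needs anyway later (e.g.\ in the proof of Theorem~\ref{Thm: L^2 Metric} and in the spanning-tree description of Zhang's measure).
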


\begin{proof}
	We use exactly the same setup as in the proof of Theorem~\ref{Thm: Tropical Isometry}, so that
		\[
			\|D\Phi_p(\vec{v})\|_{\sup}= \left\|\frac{1}{\ell(e)}\int_e \right\|_{\sup} 
				= \frac{\|\pi(e)\|_{\sup}}{\ell(e)} = \frac{F(e)}{\ell(e)}.
		\]
	Hence the local scaling factor for the Foster/Zhang metric is $F(e) / \ell(e)$, which means the segment $e$ 
	maps to a segment of length $F(e)$ under the Abel-Jacobi map. 
	
	The claim about the total length of the image graph is an immediate  
	consequence of Foster's Theorem, given below as Corollary~\ref{Thm: Foster}. Finally, to see that a segment
	has length at most 1, we observe that Theorem~\ref{Thm: Foster Interpretation} below gives
	$F(e) = 1 - r(e) / \ell(e)$, where $r(e)$ is the effective resistance between the endpoints of the segment $e$. 
	The inequality $r(e) \leq \ell(e)$ always holds \cite[\S6]{Baker_Faber_2006}, and $r(e) = 0$ if and only if
	$e^- = e^+$, that is, $e$ is a loop in $\Gamma$.
\end{proof}

	\begin{remark} \label{FosterSubdivisionRemark}
	As a consequence of Theorem~\ref{Thm: Foster Metric} and the fact that segments of $\Gamma$ map to 
	straight segments in $\Omega(\Gamma)^*$, we see that the Foster coefficients
	are additive with respect to edge refinements. That is, if $G'$ is a refinement of $G$, and if the edge $e$ of $G$
	 is subdivided into $e_1, \ldots, e_s$ in $G'$, then
		\[
			F(e) = F(e_1) + \cdots + F(e_s).
		\]
	In fact, Lemma~\ref{Lem: Length of sub-edge} gives the stronger formula
	$F(e_i) = \frac{\ell(e_i)}{\ell(e)}F(e)$ for $i = 1, \ldots, s$.
	% This can also be shown directly using the circuit theory interpretation of $F(e)$ given in \S\ref{Sec: Foster Coefficients} below.
	\end{remark}

\subsection{The Euclidean Metric}

The Euclidean metric is defined in a similar fashion to that of the Foster/Zhang metric. We begin by letting $G$ be a weighted graph and $\pi: C_1(G, \RR) \to H_1(G, \RR)$ the orthogonal projection relative to the inner product $\ip{\cdot}{\cdot}$ defined in \S\ref{Sec: Wtd graphs}. For any $1$-chain $\alpha \in C_1(G, \RR)$, we define its \textbf{Euclidean norm} to be 
	\[
		\|\alpha\|_{\Euclidean} = \sqrt{\ip{\alpha}{\alpha}},
	\]
and we define the \textbf{Euclidean norm} of an element $\int_\alpha \in \Omega(G)^*$ by
	\[
		\left\|\int_\alpha\right\|_{\Euclidean} = \|\pi(\alpha)\|_{\Euclidean}.
	\]
The Euclidean norm is well-defined by Lemma~\ref{Lem: Perfect Pairing}.

\begin{lem} \label{Lem: L^2 refinement}
	Let $G$ and $G'$ be weighted graphs such that $G'$ refines $G$, and let $r: C_1(G, \RR) \to C_1(G',\RR)$ 
	be the refinement homomorphism on chain spaces. Then the induced isomorphism 
	$\Omega_1(G, \RR)^* \to \Omega_1(G', \RR)^*$ given by $\int_\alpha \mapsto \int_{r(\alpha)}$ is an 
	isometry for the Euclidean norm.
\end{lem}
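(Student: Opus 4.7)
The plan is to mimic the proof of Lemma~\ref{Lem: Foster refinement}, replacing the sup-norm computation with the analogous computation for the weighted $L^2$-inner product defined in \S\ref{Sec: Wtd graphs}. By Lemma~\ref{Lem: Refinement}(1), $r$ sends $H_1(G, \RR)$ isomorphically onto $H_1(G', \RR)$ and takes $\im(d_1)$ into $\im(d_2)$. Hence if $\pi$ and $\pi'$ denote the respective orthogonal projections onto cycle space, it suffices to prove $\|r(\alpha)\|_{\Euclidean} = \|\alpha\|_{\Euclidean}$ for every $\alpha \in H_1(G, \RR)$; from this the claim for functionals $\int_\alpha$ follows, since $\pi'(r(\alpha)) = r(\pi(\alpha))$.

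By induction on the number of vertices added in the refinement, I would reduce to the case that $G'$ is obtained from $G$ by subdividing a single edge $e_0$ into two edges $e_1, e_2$ with $\ell(e_1) + \ell(e_2) = \ell(e_0)$, just as in the proof of Lemma~\ref{Lem: Refinement}. For $\alpha = \sum_e \alpha(e). e \in C_1(G, \RR)$, the refinement satisfies $r(\alpha)(e_1) = r(\alpha)(e_2) = \alpha(e_0)$ and $r(\alpha)(e) = \alpha(e)$ for every other edge. Plugging these values into the defining formula of the inner product gives
\[
\|r(\alpha)\|_{\Euclidean}^2 = \sum_{e \neq e_0} \alpha(e)^2 \ell(e) + \alpha(e_0)^2 \bigl(\ell(e_1) + \ell(e_2)\bigr) = \sum_{e \in E(G)} \alpha(e)^2 \ell(e) = \|\alpha\|_{\Euclidean}^2,
\]
which is the desired equality. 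Note that this argument does not require $\alpha$ to be a cycle; the Euclidean norm itself is preserved on all of $C_1(G, \RR)$ under refinement, which is cleaner than the sup-norm situation (where one genuinely needs to project first before comparing).

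The ``hard part'' is essentially nonexistent here, since the length function was designed to be additive under subdivision; the content of the lemma is simply that this additivity makes the weighted $L^2$-inner product refinement-invariant. The only minor subtlety is checking that the identification $\Omega(G)^* \simeq \Omega(G')^*$ of the statement really does correspond, via Lemma~\ref{Lem: Perfect Pairing}, to the map $\pi(\alpha) \mapsto \pi'(r(\alpha))$ on cycles, and this is exactly the content of Lemma~\ref{Lem: Refinement}(3). Once these pieces are assembled, the proof is a one-line calculation.
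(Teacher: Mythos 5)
Your proof is correct and follows essentially the same route as the paper: reduce to cycle representatives (where $\pi(\alpha)=\alpha$ and $\pi'(r(\alpha))=r(\alpha)$) and then observe that $r$ preserves the weighted inner product because edge lengths are additive under subdivision. The paper phrases the final computation via bilinearity and orthogonality of distinct edges rather than induction on a single subdivision, but the content is identical.
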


\begin{proof}
	Suppose $\alpha \in H_1(G, \RR)$. Then $\pi(\alpha) = \alpha$ and $\pi'(r(\alpha)) = r(\alpha)$, where
	$\pi':C_1(G', \RR) \to H_1(G', \RR)$ is the orthogonal projection for $G'$. So it suffices to prove that
	$\ip{\alpha}{\alpha} = \ip{r(\alpha)}{r(\alpha)}$. By linearity of the inner product and the fact that distinct
	edges are orthogonal, we are reduced to proving $\ip{e}{e} = \ip{r(e)}{r(e)}$ for an arbitrary edge $e$ of $G$, 
	in which case the result is immediate from the definitions.
%	Suppose $e$ is subdivided into $e_1, \ldots, e_s$ in $G'$. Then 
%		\[
%			\ip{r(e)}{r(e)} = \sum_{i,j} \ip{e_i}{e_j} = \sum_i \ip{e_i}{e_i} = \sum_i \ell(e_i) = \ell(e)
%			= \ip{e}{e},
%		\]
%	where the second equality follows from the orthogonality relations on the edges and the second to last equality
%	follows from the fact that refinement of a weighted graph preserves edge lengths.
\end{proof}

\begin{define}
	An element $s \in \Omega(\Gamma)^*$ can be represented by choosing a model $G$ and a functional 
	$\int_\alpha \in \Omega(G)^*$. Define 
	$\|s\|_{\Euclidean} = \left\|\int_\alpha \right\|_{\Euclidean}$. By the previous lemma, this quantity is 
	independent of the choice of model. We call $\|\cdot\|_{\Euclidean}$ the \textbf{Euclidean metric} on 
	$\Omega(\Gamma)^*$. 
\end{define}

\begin{thm} \label{Thm: L^2 Metric}
	Let $\Gamma$ be a tropical curve, and let $p \in \Gamma$ be a basepoint. If we endow the Jacobian 
	$J(\Gamma)$ with the Euclidean metric, then the image of a segment $e \subset \Gamma$ 
	under the Abel-Jacobi map $\Phi_p: \Gamma \to J(\Gamma)$ has length $\sqrt{\ell(e)F(e)}$, where
	$F(e)$ is the Foster coefficient defined in the previous section. 
\end{thm}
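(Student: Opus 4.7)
The plan is to mimic the proof of Theorem~\ref{Thm: Foster Metric}, substituting the Euclidean norm for the sup-norm throughout. First I would fix a point $q$ of valence~$2$ in the interior of the segment $e$, let $\vec{v}$ be a unit-speed tangent vector there, and choose a simply-connected subgraph $\Gamma_1 \subset \Gamma$ containing $p$ and $e$. Theorem~\ref{Thm: Abel-Jacobi Properties}(2) gives a lift $\widetilde{\Phi}_p : \Gamma_1 \to \Omega(\Gamma)^*$, and formula~\eqref{Eq: Tangent computation} shows $D\widetilde{\Phi}_p(\vec{v}) = \frac{1}{\ell(e)}\int_e$. By the definition of the Euclidean norm on $\Omega(\Gamma)^*$, the local stretching factor is $\|D\Phi_p(\vec{v})\|_{\Euclidean} = \frac{1}{\ell(e)}\|\pi(e)\|_{\Euclidean}$, which is constant along $e$; since the image is a straight segment by Theorem~\ref{Thm: Abel-Jacobi Properties}(4), the Euclidean length of $\Phi_p(e)$ is simply $\|\pi(e)\|_{\Euclidean}$.

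The identity that remains to be established is therefore $\|\pi(e)\|_{\Euclidean}^2 = F(e)\,\ell(e)$.  The first half is an easy orthogonality computation: because $\pi$ is orthogonal projection onto $H_1(G,\RR)$ along $\im(d)$, we have $\langle \pi(e),\, e - \pi(e)\rangle = 0$, so
\[
	\|\pi(e)\|_{\Euclidean}^2 \;=\; \langle\pi(e),\pi(e)\rangle \;=\; \langle\pi(e),e\rangle \;=\; \pi(e)(e)\cdot\ell(e),
\]
where the last equality uses the fact that the edges of $G$ form an orthogonal basis of $C_1(G,\RR)$ with $\langle e',e'\rangle = \ell(e')$, so only the $e$-coefficient of $\pi(e)$ contributes.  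Thus $\|\pi(e)\|_{\Euclidean}^2 / \ell(e)$ equals the diagonal coefficient $\pi(e)(e)$.

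The harder half, and the main obstacle, is to identify this diagonal coefficient with $F(e) = \|\pi(e)\|_{\sup}$, i.e., to show that the sup-norm of $\pi(e)$ is always attained at the edge $e$ itself.  Here I would invoke an electrical interpretation: the $1$-chain $e - \pi(e) \in \im(d)$ satisfies $d^*(e - \pi(e)) = e^+ - e^-$, and so represents the unit current flow between $e^-$ and $e^+$.  Applying Ohm's law to the direct edge then gives $(e - \pi(e))(e) = r(e)/\ell(e)$, where $r(e)$ is the effective resistance, so $\pi(e)(e) = 1 - r(e)/\ell(e)$.  Combining this with the formula $F(e) = 1 - r(e)/\ell(e)$ from the forthcoming Theorem~\ref{Thm: Foster Interpretation} (whose proof uses a maximum-principle argument to bound the off-diagonal currents) yields $\pi(e)(e) = F(e)$, and hence the desired length $\|\pi(e)\|_{\Euclidean} = \sqrt{F(e)\,\ell(e)}$.
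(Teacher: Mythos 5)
Your proposal is correct, and the first two steps (the local stretching-factor computation via the lift and \eqref{Eq: Tangent computation}, and the orthogonality identity $\|\pi(e)\|_{\Euclidean}^2 = \ip{\pi(e)}{e} = a_{11}\ell(e)$) coincide with the paper's argument. Where you genuinely diverge is in identifying the diagonal coefficient $a_{11}$ with $F(e)$: the paper does this in one stroke by citing Corollary~\ref{Cor: Foster-Trees}, i.e.\ Kirchhoff's spanning-tree formula for the projection matrix (Theorem~\ref{Thm: Projections-Trees}), which simultaneously shows that the sup-norm of $\pi(e)$ is attained on the diagonal and that $a_{11}$ is the weighted fraction of spanning trees omitting $e$. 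You instead observe that $e-\pi(e)\in\im(d)$ with $d^*(e-\pi(e)) = e^+ - e^-$, so it is the unit current flow between the endpoints of $e$, whence $a_{11} = 1 - r(e)/\ell(e)$ by Ohm's law, and then you import $F(e) = 1 - r(e)/\ell(e)$ from Theorem~\ref{Thm: Foster Interpretation}. This is legitimate and not circular (that theorem is proved from Corollary~\ref{Cor: Foster-Trees} and a deletion--contraction identity, with no reference to the present result), and it yields the pleasant intermediate identity $\|\pi(e)\|_{\Euclidean}^2 = \ell(e) - r(e)$; the cost is that the spanning-tree machinery is not avoided, only relocated into the cited theorem, whereas the paper's route keeps the whole computation inside the projection formula. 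Two small caveats: your parenthetical description of how Theorem~\ref{Thm: Foster Interpretation} is proved (a maximum-principle bound on off-diagonal currents) is not how the paper proves it, though this does not affect your argument since you use only the statement; and the Ohm's-law step identifying $(e-\pi(e))(e)$ with $r(e)/\ell(e)$ deserves a line of justification, e.g.\ writing $e-\pi(e) = df$ and noting $f = j_{e^-}(e^+,\cdot)$ up to an additive constant, so that $(df)(e) = j_{e^-}(e^+,e^+)/\ell(e) = r(e)/\ell(e)$, exactly as the potential kernel is used in the proof of Theorem~\ref{Thm: Commensurate}.
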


\begin{proof}
	Again we use the setup from the proof of Theorem~\ref{Thm: Tropical Isometry}, 
	so that
		\[
			\left\|D\Phi_p(\vec{v}) \right\|_{\Euclidean}
			= \left\| \frac{1}{\ell(e)}\int_e \right\|_{\Euclidean} =\frac{1}{\ell(e)} \sqrt{\ip{\pi(e)}{\pi(e)}}.
		\]
	
	Enumerate the edges of $G$ as $e_1, \ldots, e_m$ in such a way that $e_1 = e$, and
	write $\pi(e_1) = \sum_i a_{i1} e_i$ for some real numbers $a_{i1} \in \RR$. Since $\pi$ is the orthogonal
	projection, we see $\ip{\pi(\alpha)}{\pi(\beta)} = \ip{\pi(\alpha)}{\beta}$ for any $1$-chains $\alpha, \beta$. 
	In particular, for $\alpha = \beta = e_1$, we get
		\[
			\ip{\pi(e_1)}{\pi(e_1)} = \ip{\pi(e_1)}{e_1} = 
			%\ip{\sum_i a_{i1} e_i}{e_1} = 
			a_{11}\ell(e_1).
		\]
	By Corollary~\ref{Cor: Foster-Trees} below, we have $a_{11} = F(e_1)$. Replacing $e_1$ by $e$, 
	we have proved that $\ip{\pi(e)}{\pi(e)} = \ell(e)F(e)$, which implies that the local scaling factor for the Abel-Jacobi
	map is $\sqrt{F(e) / \ell(e)}$. In particular, a segment of length $\ell(e)$ maps to a segment of length
		$\ell(e) \sqrt{F(e) / \ell(e)} = \sqrt{\ell(e)F(e)}$.
\end{proof}

\begin{example}
	Continuing Example~\ref{Example: Theta}, we want to compute the
	length of $\Phi_p(e_i)$ relative to the various metrics we've defined. The tropical length can be calculated
	immediately because we have written $\int_{e_i}$ in terms of the basis $\{\int_A, \int_B\}$ for 
	$\Omega_\ZZ(\Gamma)^*$. Indeed, looking at~\eqref{Eq: Tropical Theta} and using the fact that
	 $\int_A, \int_B $ and $\int_A + \int_B$ are all primitive integral vectors, we find
		\[
			\|\Phi_p(e_1)\|_\ZZ = a, \qquad \|\Phi_p(e_2)\|_\ZZ = b, \qquad \|\Phi_p(e_3)\|_\ZZ = c, \qquad
			\|\Phi_p(e_4)\|_\ZZ = 0.
		\]
	As $e_1, e_2, e_3$ are each part of some cycle, the Abel-Jacobi map is an isometry on these edges for the 
	tropical metric. This is in perfect agreement with Theorem~\ref{Thm: Tropical Isometry}.

The associated Foster coefficients can be computed either directly by orthogonal projection, by averaging over spanning trees 
	as in Corollary~\ref{Cor: Foster-Trees}, or via circuit theory as in
	Theorem~\ref{Thm: Foster Interpretation}. The result is that
		\[
			F(e_1) = \frac{a(b+c)}{ab+ac+bc}, \qquad F(e_2) = \frac{b(a+c)}{ab+ac+bc}, 
		\]
		\[
			F(e_3) = \frac{c(a+b)}{ab+ac+bc}, \qquad F(e_4) = 0.
		\]
	Theorem~\ref{Thm: Foster Metric} implies that 
	 $\|\Phi_p(e_i)\|_{\sup} = F(e_i)$, while Theorem~\ref{Thm: L^2 Metric} shows that $\|\Phi_p(e_i)\|_{\Euclidean} = \sqrt{\ell(e_i)F(e_i)}$.
\end{example}

% ----------------------------------------------------------------
% ----------------------------------------------------------------

\section{Interpretations of the Foster Coefficients}
\label{Sec: Foster Coefficients}

As mentioned in the introduction, the present work was originally motivated by a desire to realize the canonical measure $\mu_{\Zh}$ which appears in Zhang's paper \cite{Zhang_1993}
as ``coming from'' the tropical Abel-Jacobi map.  In order to make the connection between the Foster coefficients defined in
\S\ref{Section: Foster/Zhang Metric} and Zhang's explicit formula for $\mu_{\Zh}$, we first relate the Foster coefficients to spanning trees and electrical networks. We also mention a connection with random walks on graphs.  

With the exception of our tropical interpretation of Zhang's measure in \S\ref{Sec: Zhang Measure}, much of the material in this section is likely to be well-known to the experts in graph theory or Arakelov theory. We claim originality only in our focus on the Foster coefficients as the central quantities of study. % Put together, these observations reveal an interesting interplay between electrical networks, random walks on graphs, and tropical geometry.

\subsection{Spanning Trees}

	In this section we give an interpretation of the Foster coefficients in terms of weighted spanning trees. Let $G$ be a weighted graph. Define the \textbf{weight} of a spanning tree $T$ in $G$ to be 
	\benn
		w(T) = \prod_{e \not\in E(T)} \ell(e),
	\eenn
the product of the lengths of all edges of $G$ not in the spanning tree. Define $w(G)$ to be the sum of all of the weights $w(T)$ over all spanning trees $T$. In particular, if $G$ is itself a tree, then $w(G)=1$. Recall that a spanning tree $T$ provides a basis of $H_1(G, \RR)$ consisting of fundamental cycles $\{\alpha_{T, e} : e \in E(G) \smallsetminus E(T)\}$. By convention, we set $\alpha_{T,e}=0$ if $e \in E(T)$.  The following fact is due to Kirchhoff. 
%; for lack of a good reference, we give a self-contained proof below.

\begin{thm} \label{Thm: Projections-Trees}
	Let $G$ be a weighted graph with edges $e_1, \ldots, e_m$. The orthogonal projection $\pi: C_1(G, \RR) \to H_1(G,\RR)$ is given by $\pi(e_i) = \sum a_{ij} e_j$, where the coefficients $a_{ij}$ may be computed via the formula
		\benn
			a_{ij} = \frac{1}{w(G)} \sum_T 
				\alpha_{T, e_i}(e_j) \ w(T),
		\eenn
	the sum being over all spanning trees of $G$.
\end{thm}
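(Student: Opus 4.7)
The plan is to define a candidate map $\tilde{\pi}: C_1(G, \RR) \to H_1(G, \RR)$ by $\tilde{\pi}(e_i) := \frac{1}{w(G)}\sum_T w(T)\alpha_{T, e_i}$ (extended by linearity) and show it agrees with the orthogonal projection $\pi$. Since $\tilde{\pi}(e_i)$ is manifestly a cycle, being a weighted average of fundamental cycles, by standard linear algebra it will suffice to check two things: that $\tilde{\pi}$ is the identity on $H_1(G, \RR)$, and that $\tilde{\pi}$ is self-adjoint with respect to $\ip{\cdot}{\cdot}$. Any self-adjoint projection onto a subspace equals the orthogonal projection onto it, which would finish the argument.

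For the first property, I will use the elementary fact that, for any spanning tree $T$, every cycle $\gamma \in H_1(G, \RR)$ admits the unique expansion $\gamma = \sum_{e \notin E(T)} \gamma(e)\alpha_{T, e}$ in the basis of fundamental cycles (immediate from $\alpha_{T, e}(e') = \delta_{e, e'}$ for $e, e' \notin E(T)$). Applying $\tilde{\pi}$ to a basis cycle $\alpha_{T_0, e_k}$ and swapping the order of summation, for each fixed $T$ the inner sum (running effectively over $e_i \notin E(T)$, since $\alpha_{T, e_i} = 0$ for $e_i \in E(T)$) collapses by this expansion identity to $\alpha_{T_0, e_k}$ itself; the outer average over $T$ then yields $\tilde{\pi}(\alpha_{T_0, e_k}) = \alpha_{T_0, e_k}$, as required.

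For the second property, I must verify $a_{ij}\ell(e_j) = a_{ji}\ell(e_i)$ for all $i, j$. The case $i = j$ is trivial. For $i \neq j$, the spanning trees $T$ contributing to the left-hand side are exactly those with $e_i \notin E(T)$ and $e_j$ lying on the $T$-path from $e_i^+$ to $e_i^-$, and the tree-swap $T \mapsto T' := (T \cup \{e_i\}) \setminus \{e_j\}$ gives a bijection onto the trees contributing to the right-hand side. The weight relation $w(T')\ell(e_i) = w(T)\ell(e_j)$ is immediate, since the swap exchanges $e_i$ and $e_j$ in the complement of the tree. The crucial observation is that $T \cup \{e_i\} = T' \cup \{e_j\}$ as subgraphs, so $\alpha_{T, e_i}$ and $\alpha_{T', e_j}$ are both the unique cycle supported on this common subgraph and therefore coincide up to a global sign; comparing the coefficients at $e_i$ and at $e_j$ in the two cycles forces $\alpha_{T, e_i}(e_j) = \alpha_{T', e_j}(e_i)$, which is exactly what is needed to match summands under the bijection.

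The main obstacle will be the sign bookkeeping in the tree-swap argument; fortunately, it becomes transparent once one recognizes that both fundamental cycles live in the common subgraph $T \cup \{e_i\} = T' \cup \{e_j\}$ and must coincide there as cycles. Once the two properties above are in hand, $\tilde{\pi}$ is a self-adjoint projection onto $H_1(G, \RR)$ (idempotence is automatic from $\tilde{\pi}(e_i) \in H_1(G, \RR)$ together with $\tilde{\pi}|_{H_1} = \operatorname{id}$), hence equals the orthogonal projection $\pi$, completing the proof.
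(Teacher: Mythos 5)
Your argument is correct, and it is a genuinely different (and self-contained) route from the one in the paper. The paper's proof introduces the same averaged operator $M = \frac{1}{w(G)}\sum_T w(T)M_T$, $M_T(e)=\alpha_{T,e}$, but then defers the entire verification that $M$ is the orthogonal projection onto $H_1(G,\RR)$ to Biggs (Prop.\ 7.2 and 15.1 of that reference); you instead prove the two characterizing properties directly. Your identity-on-cycles step is exactly right: the expansion $\gamma=\sum_{e\notin E(T)}\gamma(e)\,\alpha_{T,e}$ shows each $M_T$ is already the identity on $H_1(G,\RR)$, hence so is the average. Your self-adjointness step, i.e.\ $a_{ij}\ell(e_j)=a_{ji}\ell(e_i)$, via the tree swap $T\mapsto T'=(T\cup\{e_i\})\setminus\{e_j\}$, also checks out: $T'$ is again a spanning tree because $e_j$ lies on the unique cycle of $T\cup\{e_i\}$, the swap is a bijection between the two sets of contributing trees with $w(T')\ell(e_i)=w(T)\ell(e_j)$, and since $\alpha_{T',e_j}$ is a scalar multiple of $\alpha_{T,e_i}$ inside the unicyclic graph $T\cup\{e_i\}=T'\cup\{e_j\}$ with both cycles normalized to take the value $1$ on their defining edge, one gets $\alpha_{T',e_j}=\alpha_{T,e_i}(e_j)\,\alpha_{T,e_i}$ and hence $\alpha_{T',e_j}(e_i)=\alpha_{T,e_i}(e_j)$, so the summands match term by term (loops never contribute off-diagonally, so they cause no trouble). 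Combined with the standard fact that a self-adjoint idempotent with image $H_1(G,\RR)$ is the orthogonal projection, this closes the argument. By comparison, the fuller argument the paper points to establishes instead that $\im(d)\subseteq\ker(M)$ directly, via an involution on pairs $(T,e)$ localized at each vertex; your version trades that for the symmetry of the weighted matrix $(a_{ij}\ell(e_j))$, which is arguably more transparent, avoids any external citation, and records a fact of independent interest.
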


\begin{proof}
For each spanning tree $T$, define a linear operator $M_T: C_1(G, \RR) \to C_1(G, \RR)$ by its values on the canonical edge basis of $C_1(G, \RR)$:
	\benn
			M_T(e) = \alpha_{T, e},
	\eenn
where we use the convention that $\alpha_{T, e} = 0$ if $e$ is an edge of $T$. Note that $\im(M_T) = H_1(G, \RR)$. Define $M: C_1(G, \RR) \to C_1(G, \RR)$ to be an average of these operators:
	\benn
		M = \frac{1}{w(G)} \sum_T w(T) M_T.
	\eenn
It is proven in \cite[Prop.~7.2~\&~15.1]{Biggs_Potential_Theory_1997} that $M$ is the orthogonal projection matrix, from which the theorem easily follows. 
\end{proof}

	Now we show the influence of the theorem on the computation of Foster coefficients. The next corollary shows that in the matrix representation for $\pi$ with respect to the canonical basis of $C_1(G, \RR)$, the Foster coefficients are precisely the entries on the diagonal. Flanders \cite{Flanders_Foster_1974} seems to have been the first to notice that the trace of the projection matrix can be interpreted in an interesting way (see~Theorem~\ref{Thm: Foster}). 
	
\begin{cor} \label{Cor: Foster-Trees}
	For a weighted graph $G$ and an edge $e_j$, set $\pi(e_i) = \sum a_{ij} e_j$ as in the theorem.  
	The Foster coefficient $F(e_i)$ is given by
		\benn
			F(e_i) = a_{ii} = \frac{1}{w(G)} \sum_
			{\substack{T \\ e_i \not\in E(T)} } w(T).
		\eenn
\end{cor}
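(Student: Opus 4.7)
The plan is to read both equalities directly off of the formula in Theorem~\ref{Thm: Projections-Trees}, namely
\[
 a_{ij} = \frac{1}{w(G)}\sum_T \alpha_{T,e_i}(e_j)\,w(T).
\]
The key observation is a uniform bound: for every spanning tree $T$ and every pair of edges, $|\alpha_{T,e_i}(e_j)|\leq 1$, and moreover $\alpha_{T,e_i}(e_j)=0$ whenever $e_i\in E(T)$ (by our convention that $\alpha_{T,e_i}=0$ in that case).

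First I would handle the computation of $a_{ii}$. By the definition of the fundamental cycle $\alpha_{T,e_i}$ from the discussion preceding the proof of Theorem~\ref{Thm: Limit Jacobian}, the closed path defining $\alpha_{T,e_i}$ traverses the edge $e_i$ exactly once in the positive direction (it begins with the sequence $e_i^-,e_i,e_i^+$) and the remainder of the path lies entirely in the tree $T$. Hence $\alpha_{T,e_i}(e_i)=1$ if $e_i\notin E(T)$ and $\alpha_{T,e_i}(e_i)=0$ otherwise. Plugging this into the formula from Theorem~\ref{Thm: Projections-Trees} gives the desired identity
\[
 a_{ii}=\frac{1}{w(G)}\sum_{T:\, e_i\notin E(T)} w(T).
\]

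Next I would show $F(e_i)=a_{ii}$. By definition, $F(e_i)=\|\pi(e_i)\|_{\sup}=\max_j |a_{ij}|$, and the quantity $a_{ii}$ just computed is already manifestly non-negative, so it suffices to verify $|a_{ij}|\leq a_{ii}$ for every $j$. Using the triangle inequality in Theorem~\ref{Thm: Projections-Trees} together with the key observation above,
\[
 |a_{ij}|\leq \frac{1}{w(G)}\sum_T |\alpha_{T,e_i}(e_j)|\,w(T)
 =\frac{1}{w(G)}\sum_{T:\,e_i\notin E(T)} |\alpha_{T,e_i}(e_j)|\,w(T)
 \leq \frac{1}{w(G)}\sum_{T:\,e_i\notin E(T)} w(T)=a_{ii},
\]
which gives the desired equality and finishes the proof.

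There is no real obstacle here; the two statements essentially fall out of Theorem~\ref{Thm: Projections-Trees} once one notices that the weights $\alpha_{T,e_i}(e_j)\in\{-1,0,1\}$ and that $\alpha_{T,e_i}$ vanishes identically when $e_i\in E(T)$. The mildest subtlety is verifying that the contribution from the $j=i$ diagonal really is $+1$ rather than $-1$ whenever $e_i\notin E(T)$, which requires recalling the precise convention for how the fundamental cycle is oriented.
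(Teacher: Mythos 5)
Your proof is correct and follows essentially the same route as the paper's: both read $a_{ii}$ off Kirchhoff's formula using $\alpha_{T,e_i}(e_i)=1$ for trees $T$ not containing $e_i$ (and the convention $\alpha_{T,e_i}=0$ otherwise), and both deduce $F(e_i)=a_{ii}$ from the bound $|\alpha_{T,e_i}(e_j)|\leq 1$, which gives $|a_{ij}|\leq a_{ii}$ for all $j$. Your write-up simply spells out the triangle-inequality step that the paper leaves implicit.
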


\begin{proof}[Proof of Corollary]
	Considering the definition of $F(e_i)$,
% as the maximum of the magnitudes of the coefficients $a_{ij}$ of the orthogonal projection, 
it suffices to show that $|a_{ii}| \geq |a_{ij}|$ for all $j$,
 % equals the maximum. 
which follows immediately from the theorem as $|\alpha_{T, e_i}(e_j)| \leq 1$ for all $j$, and $\alpha_{T, e_i}(e_i) = 1$ for every tree $T$ not containing the edge~$e_i$. 
\end{proof}

\begin{remark}
	Even though the maximum defining $F(e_i)$ is attained on the diagonal coefficient, it is not necessarily uniquely attained there. 
	% Also, the formula in Corollary~\ref{Cor: Foster-Trees} shows that the Foster cofficients are independent of the orientation on a weighted graph.
\end{remark}

\subsection{Circuit Theory and Random Walks}
\label{Sec: Circuits}

	Given a weighted graph $G$, we can view it as an electrical network by identifying each vertex of $G$ with a node, and each edge $e$ of $G$ with a wire (branch) of resistance $\ell(e)$. In this setting, we can talk about the \textbf{effective resistance} between two nodes $x,y$, which will be denoted by $r(x,y)$. For example, it is symmetric in the nodes $x$ and $y$, and it obeys the classical parallel and series rules of circuit reduction. For an exposition of the mathematical theory of effective resistance that is well-adapted to our needs, see \cite[\S6]{Baker_Faber_2006}. (See also \cite{Coyle-Lawler_1999, Doyle_Snell_1984}.)	
	
	For an edge $e$, let us agree to write $r(e) = r(e^-, e^+)$ for the effective resistance between the endpoints of $e$. Let us also write $\mathcal{R}(e)$ for the effective resistance between $e^-$ and $e^+$ on the graph $G - e$ obtained by deleting the edge $e$. If $G - e$ is disconnected, we set $\mathcal{R}(e) = \infty$. 
	
\begin{thm} \label{Thm: Foster Interpretation}
	Let $G$ be a weighted graph. For each edge $e \in E(G)$, the Foster coefficient $F(e)$ satisfies the identities
		\[
			F(e) = 1 - \frac{r(e)}{\ell(e)} = \frac{\ell(e)}{\ell(e) + \mathcal{R}(e)},
		\]
	where the final quantity is interpreted as $0$ if $\mathcal{R}(e) = \infty$.
\end{thm}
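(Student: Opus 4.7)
The plan is to interpret the diagonal coefficient $\pi(e)(e)$ of the orthogonal projection $\pi: C_1(G, \RR) \to H_1(G, \RR)$ in circuit-theoretic terms. By Corollary~\ref{Cor: Foster-Trees}, $F(e) = a_{ee} = \pi(e)(e)$, so it suffices to evaluate this diagonal coefficient via a current-flow argument and then derive the second identity from Kirchhoff's parallel rule.

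First, I would identify $\pi(e)$ with the error between $e$ and a unit current flow. Note that $d^*(e) = \delta_{e^+} - \delta_{e^-}$, so $e$ is itself a (non-optimal) unit flow from $e^-$ to $e^+$. Let $i_e \in C_1(G, \RR)$ denote the unit current flow from $e^-$ to $e^+$, i.e., the unique minimizer of the Dirichlet energy $\ip{\alpha}{\alpha}$ subject to the constraint $d^*\alpha = \delta_{e^+} - \delta_{e^-}$. A standard Lagrange-multiplier computation shows $i_e \perp H_1(G, \RR) = \ker d^*$, hence $i_e \in \im(d)$ by the Hodge decomposition~\eqref{Eqn: Chain Decomposition}. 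Writing $e = (e - i_e) + i_e$ with $e - i_e \in H_1(G, \RR)$ and $i_e \in \im(d)$, uniqueness of the orthogonal decomposition yields $\pi(e) = e - i_e$.

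Evaluating at the edge $e$ gives $\pi(e)(e) = 1 - i_e(e)$. By Ohm's law, the current flowing through an edge of resistance $\ell(e)$ equals the potential drop across that edge divided by $\ell(e)$. For the unit current flow from $e^-$ to $e^+$, the potential drop between these two endpoints is precisely the effective resistance $r(e)$ (indeed, this is the variational definition of $r(e)$), so $i_e(e) = r(e)/\ell(e)$ and the first identity $F(e) = 1 - r(e)/\ell(e)$ follows.

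For the second identity, I would apply the parallel-resistance rule to the two parallel ``branches'' connecting $e^-$ to $e^+$: the single edge $e$ of resistance $\ell(e)$, and the complementary network $G - e$ of effective resistance $\mathcal{R}(e)$. This gives $1/r(e) = 1/\ell(e) + 1/\mathcal{R}(e)$, i.e., $r(e) = \ell(e)\mathcal{R}(e)/(\ell(e) + \mathcal{R}(e))$; substituting yields $1 - r(e)/\ell(e) = \ell(e)/(\ell(e) + \mathcal{R}(e))$. The bridge case $\mathcal{R}(e) = \infty$ corresponds to $r(e) = \ell(e)$, and both expressions evaluate to $0$ in the limit. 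The main obstacle is the identification $\pi(e) = e - i_e$; this rests on the orthogonality $\im(d) \perp H_1(G, \RR)$ from \eqref{Eqn: Chain Decomposition} together with the variational characterization of the current flow as an energy minimizer, after which everything reduces to a one-line application of Ohm's law.
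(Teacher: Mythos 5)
Your proof is correct, but it takes a genuinely different route from the paper's for the first identity. The paper works combinatorially: it uses the full strength of Corollary~\ref{Cor: Foster-Trees} (the spanning-tree formula for $F(e)$), the deletion--contraction bijection giving $w(G)F(e) = w(G) - w(G/e)$, and an identity from an external reference, $w(G/e) = r(e)w(G)/\ell(e)$, to arrive at $F(e) = 1 - r(e)/\ell(e)$ (treating loops separately). You instead use only the statement $F(e) = a_{ee} = \pi(e)(e)$ from Corollary~\ref{Cor: Foster-Trees} and compute the diagonal entry electrically: since $e$ satisfies $d^*e = \delta_{e^+} - \delta_{e^-}$, the Hodge decomposition \eqref{Eqn: Chain Decomposition} gives $\pi(e) = e - i_e$ with $i_e := e - \pi(e) \in \im(d)$ the unit current flow, and Ohm's law yields $i_e(e) = r(e)/\ell(e)$. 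This is sound, and it can be made fully self-contained inside the paper's toolkit without appealing to Thomson's principle or Lagrange multipliers: write $i_e = df$, note $d^*df = d^*e = \delta_{e^+} - \delta_{e^-}$, so $f = j_{e^-}(e^+,\cdot) + c$ and $f(e^+) - f(e^-) = j_{e^-}(e^+,e^+) = r(e)$, using the potential kernel already introduced before Lemma~\ref{Lem: 2-connected}; the loop and bridge cases fall out of the same computation ($r(e)=0$, resp.\ $\pi(e)=0$). What each approach buys: yours avoids the citation to the $w(G/e)$ identity and is shorter and more conceptual (the Foster coefficient is literally ``one minus the fraction of the unit current carried by $e$ itself''), while the paper's stays within the weighted-spanning-tree framework it has just set up and makes the link to (TC1) transparent. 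The second identity is handled identically in both, via the parallel rule, with the same degenerate interpretation when $\mathcal{R}(e) = \infty$.
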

	
	Before giving the proof, we state a theorem of R.M. Foster, proved in 1949, that
	relates the metric data of $F(e)$ to the combinatorial data of a weighted graph $G$. It 
	explains our motivation in coining the term \textit{Foster coefficient}.

\begin{cor}[Foster's Theorem, \cite{Foster_Theorem_1949}] \label{Thm: Foster}
	Let $G$ be a weighted graph, and let $g = \#E(G)-\#V(G)+1$. Then
		\benn
			\sum_{e \in E(G)} F(e) = \sum_{e \in E(G)} \frac{\ell(e)}{\ell(e) + \mathcal{R}(e)}  = g.
		\eenn
\end{cor}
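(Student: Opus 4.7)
The plan is to observe that Foster's theorem follows almost immediately by combining Corollary~\ref{Cor: Foster-Trees} with a basic linear algebra fact about the trace of a projection. First I would invoke Corollary~\ref{Cor: Foster-Trees} to identify the Foster coefficient $F(e_i)$ with the diagonal entry $a_{ii}$ of the matrix representation of $\pi : C_1(G,\RR) \to H_1(G,\RR)$ with respect to the canonical edge basis $e_1, \ldots, e_m$. Summing over $i$ then gives
\[
    \sum_{e \in E(G)} F(e) \;=\; \sum_{i=1}^{m} a_{ii} \;=\; \operatorname{tr}(\pi).
\]

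Next I would use the fact that $\pi$ is the orthogonal projection onto $H_1(G,\RR)$; in particular $\pi^2 = \pi$, so $\pi$ is diagonalizable with eigenvalues only $0$ and $1$. Its trace, which is basis-independent, therefore equals the dimension of its image:
\[
    \operatorname{tr}(\pi) \;=\; \dim_{\RR} H_1(G,\RR) \;=\; g,
\]
where the final equality is the standard identification of the cycle space dimension with the first Betti number $g = \#E(G) - \#V(G) + 1$. This establishes the first equality $\sum_e F(e) = g$.

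For the middle equality, I would simply invoke Theorem~\ref{Thm: Foster Interpretation}, which states that $F(e) = \ell(e) / (\ell(e) + \mathcal{R}(e))$, with the convention that this ratio is $0$ when $\mathcal{R}(e) = \infty$. Substituting termwise yields $\sum_{e} \ell(e) / (\ell(e) + \mathcal{R}(e)) = g$, as desired.

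There is no real obstacle here: all the genuine content has been absorbed into Corollary~\ref{Cor: Foster-Trees} (which identifies Foster coefficients with diagonal entries of the projection) and Theorem~\ref{Thm: Foster Interpretation} (which gives the effective resistance formula for $F(e)$). Once those are in hand, Foster's theorem reduces to the tautology that the trace of a projection equals its rank. The only detail worth double-checking is the conventional case $\mathcal{R}(e) = \infty$ corresponding to a bridge~$e$: in that situation $e \in \operatorname{im}(d)$, so $\pi(e) = 0$, whence $a_{ii} = 0 = \ell(e)/(\ell(e)+\infty)$, consistent with both sides of the claimed identity.
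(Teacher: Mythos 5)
Your proof is correct, but it reaches the key identity $\sum_e F(e) = g$ by a different route than the paper. The paper also splits the statement the same way you do (the resistance form of the identity is immediate from Theorem~\ref{Thm: Foster Interpretation}), but for $\sum_e F(e) = g$ it stays entirely combinatorial: it plugs in the spanning-tree formula of Corollary~\ref{Cor: Foster-Trees}, swaps the order of summation over edges and spanning trees, and observes that the complement of any spanning tree contains exactly $g$ edges, so $w(G)\sum_e F(e) = \sum_T w(T)\cdot g = w(G)g$. You instead use only the identification $F(e_i) = a_{ii}$ from that corollary and then appeal to the linear-algebra fact that the trace of an idempotent (here the orthogonal projection $\pi$ onto $H_1(G,\RR)$) equals the dimension of its image, together with $\dim H_1(G,\RR) = \#E(G)-\#V(G)+1$. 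Both arguments are sound; yours is shorter once one grants basis-independence of the trace and the Betti-number count, while the paper's double-counting argument needs no facts about $\dim H_1$ beyond what is encoded in spanning trees and makes the weighting by $w(T)/w(G)$ transparent. It is worth noting that your trace argument is essentially the observation the paper attributes to Flanders in the paragraph preceding Corollary~\ref{Cor: Foster-Trees}, so your route is acknowledged there even though it is not the proof given. Your handling of the bridge case ($\mathcal{R}(e)=\infty$, $\pi(e)=0$) is also consistent with the paper's conventions.
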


\begin{proof}[Proof of the Corollary]
	The first equality is immediate from Theorem~\ref{Thm: Foster Interpretation}. 
	To see the second equality, note that Corollary~\ref{Cor: Foster-Trees} gives
	\benn
		w(G) \sum_e F(e) = \sum_e \sum_{\substack{T \\ e \not\in E(T)}} w(T)
		= \sum_T w(T) \sum_{e \not\in E(T)} 1 =  w(G)g,
	\eenn
since there are precisely $g$ edges in the complement of any spanning tree. 
% As $w(G) > 0$ for any weighted graph, the proof is complete.
\end{proof}

\begin{proof}[Proof of Theorem~\ref{Thm: Foster Interpretation}]
	There are two cases to consider for the first equality.  We start by assuming that~$e$ is a loop. Then no spanning tree contains $e$, and hence Corollary~\ref{Cor: Foster-Trees} gives
	\[
		\sum_{\substack{T \\ e \not\in E(T)}} w(T) = w(G) \Rightarrow F(e) = 1.
	\]
Also, $e^+ = e^-$, so $r(e) = 0$. Therefore $1 - r(e) / \ell(e) = 1$.

Now suppose $e$ is not a loop. Then
	\[
		w(G)F(e) = \sum_{\substack{T \\ e \not\in E(T)}} w(T) = w(G)
			- \sum_{\substack{T \\ e \in E(T)}} w(T).
	\]
Observe that there is a bijective correspondence between spanning trees $T$ in $G$ containing the edge $e$ and spanning trees $T/e$ in $G/e$, the graph obtained by contracting the edge $e$. Moreover, $w(T) = w(T/e)$ under this correspondence. Hence $w(G)F(e) = w(G) - w(G/e)$. By the proof of \cite[Lem.~4.5]{Faber_EGBC_2009}\footnote{In \cite{Faber_EGBC_2009}, the notation $\eta(\ell_1, \ldots, \ell_m)$ is used for $w(G)$,
and $R_k(\ell_1, \ldots, \ell_m)$ is used for $r(e_k) w(G)$, 
where $\ell_1, \ldots, \ell_m$ are the lengths of the edges $e_1, \ldots, e_m$.
},
we get $w(G/e) = r(e)w(G)/\ell(e)$. Inserting this into the previous equality and dividing by $w(G)$ gives the result.

	Next observe that we can view $G$ as a circuit in parallel between the two vertices $e^-$ and $e^+$; the edge $e$ and the complement of the edge $e$ form the two parallel pieces. Then the parallel rule for circuit reduction \cite[Thm.~9(iii)]{Baker_Faber_2006} shows that 
	\[
		r(e) = r(e^-,e^+) = \left( \frac{1}{\ell(e)} + \frac{1}{\mathcal{R}(e)}\right)^{-1} = 
			\frac{\ell(e)\mathcal{R}(e)}{\ell(e) + \mathcal{R}(e)}.
	\]  
We interpret the final quantity to be $\ell(e)$ if $\mathcal{R}(e) = \infty$.
The result follows upon inserting this calculation into the formula $F(e) = 1 - r(e) / \ell(e)$ and simplifying. 
\end{proof}

\begin{remark}
There is a well-known connection between electrical networks and random walks on graphs; see \cite{Doyle_Snell_1984} for a beautiful introduction to this subject.
We briefly indicate (without proof) a random walk interpretation of the Foster coefficients.
If $G$ is a weighted graph, define a random walk on $V(G)$ as follows: if $v \in V(G)$ and $e \in E(G)$ is an edge having $v$ as 
one endpoint and $w$ as the other, then a random walker starting at $v$ will move to $w$ along $e$ with probability 
\[
\frac{1/\ell(e)}{\sum_{e' \in E(G), v \in e'} 1/\ell(e')}.
\]

Let $P(e)$ be the probability that a random walker
starting at $v$ reaches $w$ without ever traversing the edge $e$.
Then $P(e) = F(e)$. 
% (exercise for the interested reader!) 

% If $e$ is part of a nontrivial cycle and $v,w$ denote the endpoints of 
% $e$, then there are two ways for a random walker starting at $v$ to reach $w$: by walking along $e$ or by walking along the complement of $e$.  Let $P(e)$ be the probability that a random walker
% starting at $v$ first reaches $w$ by walking along the complement of
% $e$.  Then one can show that $P(e) = F(e)$.
\end{remark}

% \subsection{Random walks}	
% \label{Sec: Random Walks}

\subsection{Zhang's canonical measure}
\label{Sec: Zhang Measure}

If $X$ is a compact Riemann surface of genus $g \geq 1$, then 
there is a well-known canonical $(1,1)$-form $\omega_{X}$ on $X$,
known variously as the \textbf{canonical $(1,1)$-form}, the \textbf{Arakelov $(1,1)$-form}, or (by abuse of terminology) the \textbf{Bergman metric}.    This $(1,1)$-form can be described explicitly as follows. Let $\omega_1,\ldots,\omega_g$ be an orthonormal basis for the space $H^0(X,\Omega^1_X)$ of holomorphic $1$-forms on $X$ with respect to the Hermitian inner product $(\omega,\eta) = \frac{i}{2}
\int_X \omega \wedge \overline{\eta}$.  Then
\begin{equation}
\tag{RS1}
\label{eq: OmegaXDef}
\omega_{X} = \frac{i}{2} \sum_{k=1}^g \omega_k \wedge \overline{\omega}_k.
\end{equation}
This description is independent of the choice of orthonormal basis for $H^0(X,\Omega^1_X)$. 

Integrating $\omega_{X}$ over $X$ shows that its volume is equal to the genus of $X$:
\begin{equation}
\tag{RS2}
\label{eq: VolX}
\int_X \omega_{X} = g.
\end{equation}

% It has everywhere nonpositive curvature, and if $g \geq 2$ the curvature form vanishes at only finitely many points.

Let $J(X)$ be the Jacobian of $X$, and let $\Phi_p : X \to J(X)$ be
the Abel-Jacobi embedding relative to a fixed base point $p \in X$.
There is a canonical translation-invariant $(1,1)$-form $\omega_J$
on $J(X)$, coming from the identification of $J(X)$ with the complex torus $H^0(X,\Omega^1_X)^*/H_1(X,\ZZ)$, whose associated K{\"a}hler metric is induced by the flat Hermitian metric on 
$H^0(X,\Omega^1_X)$ (i.e., the inner product on $H^0(X, \Omega^1_X)$ defined above).  
A basic fact is that $\omega_{X}$ is obtained by pulling back $\omega_J$ along the Abel-Jacobi map:
\begin{equation}
\tag{RS3}
\label{eq: OmegaPullback}
\omega_{X} = \Phi_p^*(\omega_J).
\end{equation}

% If $\rho_X$ (resp. $\rho_J$) denotes the distance on $X$ (resp. $J(X)$ associated to $\omega_X$ (resp. $\omega_J$), it follows that locally on $X$ we have $\rho_X(x,y) = \rho_J(\Phi_p(x),\Phi_p(y))$.

\medskip

We seek tropical analogues of (\ref{eq: OmegaXDef}),
(\ref{eq: VolX}), and (\ref{eq: OmegaPullback}) with $X$ replaced
by a tropical curve $\Gamma$ and $J(X)$ replaced by $J(\Gamma)$.
The role of the canonical $(1,1)$-form $\omega_X$ will be played by
the canonical measure $\mu_{\Zh}$ defined by S. Zhang in 
\cite{Zhang_1993}.

Let $\Gamma$ be a tropical curve of genus $g \geq 1$, and fix
a model $G$ for $\Gamma$.
The canonical measure on $\Gamma$ is defined by the formula
\[
\mu_{\Zh} = \sum_{e \in E(G)} F(e) \frac{dx|_e}{\ell(e)}
= \sum_{e \in E(G)} \frac{dx|_e}{\ell(e) + \mathcal{R}(e)},
\]
where $dx|_e$ is the Lebesgue measure on $e \cong [0,\ell(e)]$
and $F(e)$ 
% $= \frac{\ell(e)}{\ell(e) + \mathcal{R}(e)}$ 
is the corresponding Foster coefficient. 
If we refine $G$ by subdividing an edge $e$ into two edges $e'$ and $e''$, and if $f: e \to \RR$ is a continuous function, then by
Remark~\ref{FosterSubdivisionRemark} we have
	\benn
		\ba
			\int_{e} f(x) F(e) \frac{dx}{\ell(e)} &= 
				\int_{e'} f(x) \frac{\ell(e')}{\ell(e)} F(e) \frac{dx}{\ell(e')} 
				+ \int_{e''} f(x) \frac{\ell(e'')}{\ell(e)} F(e) \frac{dx}{\ell(e'')}  \\
			&= \int_{e'} f(x)  F(e') \frac{dx}{\ell(e')} 
				+ \int_{e''} f(x) F(e'') \frac{dx}{\ell(e'')}.
		\ea
	\eenn
It follows easily from this computation that the Zhang measure is independent of the model used to define it.

There is a description of $\mu_{\Zh}$ in terms of spanning trees of $G$ which is reminiscent of (\ref{eq: OmegaXDef}).
% Recall that each spanning tree $T$ of $G$ gives rise to a basis
% $\omega_1^T,\ldots,\omega_g^T$ for $H_1(\Gamma,\ZZ)$, where 
%$\omega_i$ is the fundamental cycle associated to the edge $e_i^T$ of $E(G) \smallsetminus E(T)$ for $i=1,\ldots,g$.
For each spanning tree $T$ of $G$, let $e_1^T, \ldots, e_g^T$ be the edges of $G$ lying outside of $T$.
It follows from Corollary~\ref{Cor: Foster-Trees} that
\begin{equation}
\tag{TC1}
\mu_{\Zh} =\sum_{k=1}^g \left( \sum_T \frac{w(T)}{w(G)} 
 \frac{dx|_{e_k^T}}{\ell(e_k^T)} \right).
\end{equation}

By Foster's theorem (Corollary~\ref{Thm: Foster}), the total mass of
$\mu_{\Zh}$ is equal to the genus of $\Gamma$:
\begin{equation}
\tag{TC2}
\int_\Gamma \mu_{\Zh} = g.
\end{equation}

Finally, let $J(\Gamma)$ be the Jacobian of $\Gamma$, and let
$\Phi_p : \Gamma \to J(\Gamma)$ be the Abel-Jacobi map relative to a fixed base point $p \in \Gamma$.
Recall that there is a canonical metric 
$\|\cdot\|_{\sup}$ on 
$J(\Gamma)$, called the Foster/Zhang metric,
coming from the identification of $J(\Gamma)$ with
the real torus $\Omega(\Gamma)^* / H_1(\Gamma,\ZZ)$.
If $e$ is an edge of the model $G$, then by Theorem~\ref{Thm: Foster Metric}
we have
\begin{equation}
\tag{TC3}
\| \Phi_p(e) \|_{\sup} = \mu_{\Zh}(e).
\end{equation}

% ----------------------------------------------------------------
% ----------------------------------------------------------------

\bibliographystyle{plain}
\bibliography{Tropical_Metric}

\end{document}